\documentclass[11pt,letterpaper]{amsart}%
\usepackage{amsmath}
\usepackage{txfonts}
\usepackage{graphicx}
\usepackage{amsthm}
\usepackage{amsfonts}
\usepackage{amssymb}
\usepackage[utf8]{inputenc}
\usepackage{comment}
\usepackage{xcolor}
\usepackage{lipsum}
\usepackage{ifthen}%
\setcounter{MaxMatrixCols}{30}

\usepackage{tikz}

\usepackage{newtx}
\usepackage{relsize}



\providecommand{\U}[1]{\protect\rule{.1in}{.1in}}
\numberwithin{equation}{section}
\newtheorem{theorem}{Theorem}[section]
\newtheorem{lem}[theorem]{Lemma}
\newtheorem{thm}[theorem]{Theorem}
\newtheorem{pro}[theorem]{Proposition}
\newtheorem{cor}[theorem]{Corollary}
\newtheorem{defi}[theorem]{Definition}
\newtheorem{rem}[theorem]{Remark}

\setcounter{MaxMatrixCols}{30}

\setcounter{tocdepth}{2}

\def\Area{\operatorname{Area}\,}
\def\Re{\operatorname{Re}}
\def\Res{\operatorname{Res}}
\def\euc{\operatorname{euc}}
\def\loc{\operatorname{loc}}

\def\S2{\mathbb{S}^2}
\def\s{\,\,\,\,}

\def\R{\mathbb{R}}

\def\S{\mathcal{S}}
\def\M{\mathcal{M}}
\def\K{\mathbb{K}}

\allowdisplaybreaks
\def\s{\,\,\,\,}

\def\R{\mathbb{R}}

\def\S{\mathcal{S}}
\def\M{\mathcal{M}}
\def\K{\mathbb{K}}
\def\res{\mathrm{Res}}

\allowdisplaybreaks

\title[Prescribing positive curvature with singularities]{Prescribing positive curvature with conical singularities on $\mathbb S^2$}

\author{Jingyi Chen, Yuxiang Li, Yunqing Wu }
\address{Department of Mathematics\\
	The University of British Columbia, Vancouver, Canada}
\email{jychen@math.ubc.ca}
\address{
	Department of Mathematics\\
	Tsinghua University, Beijing, China}
\email{liyuxiang@mail.tsinghua.edu.cn}
\address{
	The Institute of Geometry and Physics\\
	University of Science and Technology of China, Hefei, Anhui, People's Republic of China}
\email{yqwu19@ustc.edu.cn}

\thanks{Chen is partially supported by NSERC Discovery Grant GR010074}
\thanks{Li is partially supported by  National Key R\&D Program of China 2022YFA1005400}


\allowdisplaybreaks[4]

\date{}

\begin{document}
	
	\maketitle

	\vspace{-.2in}
	
	\begin{abstract}
		For conformal metrics with conical singularities and positive curvature on $\mathbb S^2$, we prove a convergence theorem and apply it to obtain a criterion for nonexistence in an open region of the prescribing data. The core of our study is a fine analysis of the bubble trees and an area identity in the convergence process. 
	\end{abstract}

	\tableofcontents
	
	\section{Introduction}
	
	A real divisor $\mathfrak{D}$ on a compact surface $\Sigma$ is a formal sum $
	\mathfrak{D}=\sum_{i=1}^{m} \beta_{i} p_{i}, $
	where $\beta:=(\beta_1,\cdots,\beta_m)\in\R^m$ and $p_{i} \in \Sigma$ are distinct. The Euler characteristic of the pair $(\Sigma,\beta)$ is defined to be
	\begin{align*}
	\chi(\Sigma,\beta):=\chi(\Sigma)+\sum_{i=1}^{m} \beta_{i}. 
	\end{align*}
	Let $g_0$ be a Riemannian metric on $\Sigma$. A conformal metric $g$ on $(\Sigma,g_{0})$ is said to represent the divisor $\mathfrak{D}$ if $g$ is a smooth metric away from $p_{1},\cdots,p_{m}$ such that around each $p_i$ there is an isothermal coordinate neighbourhood $U_{i}$ w.r.t. $g_0$ with a coordinate $z_{i}$ such that $z_{i}(p_{i})=0$ and $g$ is in the form
	\begin{align*}
	g=e^{2v} |z_{i}|^{2 \beta_{i}}g_{\euc},
	\end{align*}
	where $v \in C^0(U_{i}) \cap C^{2}(U_{i} \backslash \{p_{i}\})$. 
	The point $p_{i}$ is called a conical singularity of angle $\theta_i=2\pi (\beta_{i}+1)$ if $\beta_{i}>-1$ and a cusp if $\beta_i=-1$. 
	
	Finding a conformal metric $g=e^{2u}g_{0}$ representing $\mathfrak{D}$ with a prescribed function $K$ on $\mathbb \Sigma$ is equivalent to solving
	\begin{align}
	\label{singular conformal metric equation}
	-\Delta_{g_{0}} u= K e^{2u} -K_{g_{0}}-2 \pi \sum_{i=1}^{m} \beta_{i} \delta_{p_{i}}
	\end{align}
	in the sense of distribution, where $\delta_{p_{i}}$ is the Dirac measure at $p_{i}$.
	
	When $\chi(\Sigma,\beta)>0$ and $\mathfrak D$ consists of conical singularities, existence of a conformal metric representing $\mathfrak D$ is known in a few cases.
	For instance, it is shown that \eqref{singular conformal metric equation} admits a solution if $0<\chi(\Sigma,\beta)<2\min_i\{1,\beta_i+1\}$ (which is called the Trudinger constant), $\beta_1,...,\beta_m>-1$ and $K$ is positive somewhere \cite[Theorem C]{Troyanov1991} and 
	if $\chi(\Sigma,\beta)>2$, the genus of $\Sigma$ is at least 1, $\beta_1,...,\beta_m>0$ and $\beta$ not in certain lattice, $K$ is a positive  Lipschitz continuous function \cite[Theorem 1.1]{bartolucci2011supercritical}. In \cite{luo1992liouville}, F. Luo and G. Tian established the uniqueness of Troyanov's solution to Liouville's equation on the punctured complex plane. For Liouville type equations, uniqueness and symmetry of solutions are studied in \cite{GM, BCJM, SSTW}, etc. and existence via a topological degree theory in \cite{li1999harnack, Lin, ChenLin03, ChenLin15}; we refer the reader to the reference therein for many other works. 
	
	For the spherical metric of constant curvature on $\mathbb S^2$, important classifications of conformal metrics with conical singularities has been achieved. It is shown that a solution cannot have exactly one singularity by W.X. Chen and C.M. Li \cite{CW-L}, a solution with exactly two conical singularities must satisfy a strong rigidity by M. Troyanov \cite{Troyanov1988}, and A. Eremenko \cite{Eremenko2004} treated three conical singularities. We will use the first two cases in our proof of results stated below. 
	
	When the singular points $\{p_1,...,p_m\}$ are not fixed on $\mathbb S^2$, the question of finding a spherical metric with prescribed angles at $m$ conical singularities has been studied, in particular, under a constrain on the holonomy representation: $ \pi_1(\mathbb S^2\backslash\{p_1,...,p_m\}) \to SO(3)$, in \cite{Mondello-P, Dey2018, Eremenko2017, kapovich}. Inspired by these works, 
	although the points in $\mathfrak{D}$ are fixed in our consideration, we introduce a function to describe the type of divisors in our interest: 
	
	First, let $\mathbb F: \mathbb R^m \to \mathscr{P}(\{1,\cdots,m\})$ be an index function for certain half spaces of $\mathbb R^m$ where $\mathscr P$ stands for the power set, given by 
	$$
	\mathbb{F}(  (v_{1}, \cdots,v_{m})  )=\Big\{i:v_i\leq\frac{1}{2}\sum^m_{k=1}v_k\Big\}.
	$$
	Then, we define 
	\begin{align*}
	&\mathcal{A}_m=\Big\{\beta=(\beta_{1},\cdots,\beta_{m})  \in\mathbb R^m:-1<\beta_1,\cdots,\beta_m\neq 0,\, \frac{1}{2}\chi(\mathbb S^2,\beta) \in (0,+\infty) \backslash \mathbb{N},\\ 
	&\text{ and for } \forall  J\subset\mathbb{F}(\beta), \,\frac{1}{2}\chi(\mathbb S^2,\beta)-\sum_{j\in J}\beta_{j}\neq |J|, |J|+1, \cdots, |J|+\big[ \frac{1}{2} \chi( \mathbb{S}^{2} ) \big]\Big\}.
	\end{align*}

	There are divisors on $\mathbb S^2$ with $\beta$ in $\mathcal{A}_m$ that are not representable by the spherical metric (cf. \cite{Mondello-P, Dey2018}); in other words, \eqref{singular conformal metric equation} with such  $\beta$ and $K=K_{g_0}=1$ admits no solutions. The {\it nonexistence} persists in an open neighbourhood of $(\beta,1)$ as a special case of our results in this paper.

	\begin{theorem}
		\label{thm openness of nonexistence K C0}
		Let $K$ lie in the set $C^{+}(\mathbb{S}^{2})$ of positive continuous functions on $\mathbb S^2$. Suppose that $\beta\in\mathcal{A}_m$ and one of the following assumptions holds:
		\begin{enumerate}
			\item[($\mathscr{O}_1$)]  $0<\chi(\mathbb{S}^{2},\beta)<2$; 
			\item[($\mathscr{O}_2$)]  $\beta_{i} \leq 1$ for any $i \in \{1,\cdots,m\}$. 
		\end{enumerate}
		If the equation 
		\begin{align*}
		-\Delta_{\mathbb{S}^{2}} u=Ke^{2u}-2\pi\sum_{i=1}^{m} \beta_i\delta_{p_i}-1
		\end{align*}
		has no solutions in $\cap_{p\in [1,2)} W^{1,p}(\mathbb{S}^{2},g_{\mathbb{S}^{2}})$, then there exists a neighbourhood $\mathscr{U}$ of  $(K,\beta)$ in $C^{+}(\mathbb{S}^{2}) \times (-1,\infty)^{\times m}$ such that for any $( \tilde{K},\tilde{\beta} ) \in \mathscr{U}$, 
		\begin{align*}
		-\Delta_{\mathbb{S}^{2}} u=\tilde{K} e^{2u}-2\pi\sum_{i=1}^{m} \tilde{\beta}_i\delta_{p_i}-1
		\end{align*}
		has no solutions in $\cap_{p\in [1,2)} W^{1,p}(\mathbb{S}^{2}, g_{\mathbb{S}^{2}})$. 
	\end{theorem}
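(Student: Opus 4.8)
The plan is to argue by contradiction: suppose no such neighbourhood $\mathscr U$ exists. Then there is a sequence $(\tilde K_n, \tilde\beta_n) \to (K,\beta)$ in $C^+(\mathbb S^2) \times (-1,\infty)^{\times m}$ together with solutions $u_n \in \cap_{p\in[1,2)} W^{1,p}(\mathbb S^2, g_{\mathbb S^2})$ of
\begin{align*}
-\Delta_{\mathbb S^2} u_n = \tilde K_n e^{2u_n} - 2\pi \sum_{i=1}^m (\tilde\beta_n)_i \delta_{p_i} - 1.
\end{align*}
Each $u_n$ is a conformal factor for a metric $g_n = e^{2u_n} g_{\mathbb S^2}$ with conical singularities of angle $2\pi((\tilde\beta_n)_i + 1)$ at $p_i$ and Gauss curvature $\tilde K_n$. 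The total area is controlled by Gauss--Bonnet: $\int_{\mathbb S^2} \tilde K_n e^{2u_n}\, dV_{g_{\mathbb S^2}} = 2\pi \chi(\mathbb S^2, \tilde\beta_n) \to 2\pi\chi(\mathbb S^2,\beta)$, so the areas $\mathrm{Area}(g_n)$ stay bounded (using that $\tilde K_n$ is bounded below by a positive constant, uniformly in $n$ for large $n$, since $K \in C^+$). I would therefore invoke the convergence theorem for such metrics established earlier in the paper: along a subsequence, $g_n$ converges in the bubble-tree sense, and the limit configuration is described by the fine bubble analysis and the area identity. The key structural consequence of that analysis — encoded precisely in the definition of $\mathcal A_m$ — is that the number $\mathcal G$ of smooth (spherical) bubbles and the number $\mathcal B$ of singular bubbles that can appear in a limit of such sequences satisfies $\mathcal G + \mathcal B = 1 + \frac12 \chi(\mathbb S^2,\beta)' - \eta$ for some $\eta$ lying in the relevant arithmetic set $\Gamma_{\mathcal B}(\beta)$, together with the bound $\mathcal G \le 1 + \frac12 \sum_i \beta_i$; the hypothesis $\beta \in \mathcal A_m$ is exactly the condition ruling out every such nonnegative integer solution \emph{except} the one with no bubbling, i.e. $\mathcal G = \mathcal B = 0$. (This is where assumptions $(\mathscr O_1)$ and $(\mathscr O_2)$ enter: they are the regimes in which the quoted classification results of Chen--Li \cite{CW-L} and Troyanov \cite{Troyanov1988, Troyanov1991} on one and two conical singularities are available to exclude the degenerate sub-bubbles that would otherwise appear — under $(\mathscr O_1)$ via the Trudinger-constant range, under $(\mathscr O_2)$ via the angle restriction $\beta_i \le 1$.)

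Granting that the only admissible limit is the no-bubble case, the subsequence of $u_n$ is precompact: there is no concentration of curvature, so $u_n$ is bounded in $\cap_{p\in[1,2)} W^{1,p}$ and, away from the singular points, the standard elliptic estimates for the Liouville-type equation give local $C^{2,\alpha}_{\mathrm{loc}}$ bounds. Hence a further subsequence $u_n \to u_\infty$ strongly in $\cap_{p\in[1,2)} W^{1,p}(\mathbb S^2, g_{\mathbb S^2})$ and in $C^2_{\mathrm{loc}}(\mathbb S^2 \setminus \{p_1,\dots,p_m\})$. Passing to the limit in the equation — using $\tilde K_n \to K$ uniformly and $(\tilde\beta_n)_i \to \beta_i$, so the Dirac masses converge in the sense of distributions — shows that $u_\infty$ solves
\begin{align*}
-\Delta_{\mathbb S^2} u_\infty = K e^{2u_\infty} - 2\pi \sum_{i=1}^m \beta_i \delta_{p_i} - 1
\end{align*}
in the distributional sense, with $u_\infty \in \cap_{p\in[1,2)} W^{1,p}(\mathbb S^2, g_{\mathbb S^2})$. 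This contradicts the hypothesis that the limiting equation has no such solution, completing the proof.

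The main obstacle is verifying that the convergence theorem applies with the \emph{varying} data $(\tilde K_n, \tilde\beta_n)$ rather than fixed $(K,\beta)$: one must check that the bubble-tree analysis and the area identity are stable under perturbation of the cone angles, in particular that the arithmetic exclusion built into $\mathcal A_m$ is \emph{open} — that for $\tilde\beta$ near $\beta$ the relevant quantities $\frac12\chi(\mathbb S^2,\tilde\beta) - \sum_{j\in J}\tilde\beta_j$ stay away from the forbidden integers $|J|, \dots, |J| + [\tfrac12\chi(\mathbb S^2)]$, and that $\mathbb F(\tilde\beta) \subseteq \mathbb F(\beta)$ after possibly shrinking the neighbourhood (the inequality $v_i \le \frac12\sum v_k$ is non-strict, so a little care is needed at the boundary case $\beta_i = \frac12\sum_k\beta_k$, but this can only occur for finitely many index patterns and the discrete set $\Gamma_{\mathcal B}$ is closed, so a uniform gap survives). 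Once this openness is in hand, every member of the perturbed sequence falls in the scope of the convergence theorem with a uniform bubble-count obstruction, and the contradiction argument closes.
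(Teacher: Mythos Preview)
Your overall strategy is exactly the paper's: argue by contradiction, apply the bubble-tree convergence theorem to the sequence $(u_n, \tilde K_n, \tilde\beta_n)$, use the hypothesis $\beta\in\mathcal A_m$ to rule out every bubble configuration, and then pass to the limit via Proposition~\ref{prop convergence.with singular point} to produce a solution for $(K,\beta)$.

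However, your final paragraph misidentifies the ``main obstacle''. You do \emph{not} need openness of $\mathcal A_m$, nor any control on $\mathbb F(\tilde\beta_n)$. The convergence theorem (Theorem~\ref{theorem metric convergence on S2}) and its corollaries are already stated for \emph{varying} data $\beta^k\to\beta$, and the arithmetic identity that emerges from the bubble count --- e.g.\ in Corollary~\ref{cor equation of v if chi<2} the relation $s+\sum_{j=1}^s\beta_{i_j}=\tfrac12\chi(\mathbb S^2,\beta)$ --- is expressed in terms of the \emph{limit} $\beta$, not the approximating $\tilde\beta_n$. So the exclusion is tested only at $\beta$ itself, and $\beta\in\mathcal A_m$ suffices directly; no perturbation argument is required.

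A secondary point: your description of how $(\mathscr O_1)$ and $(\mathscr O_2)$ enter is vague. Their role is structural, not merely to enable the Chen--Li/Troyanov classification (those are used in the general Proposition~\ref{pro v's equation when uk not converge}). Under $(\mathscr O_1)$ the total curvature is below $4\pi$, so no smooth bubble (each carrying mass $4\pi$) can form, and only Case~2.2.1 survives; under $(\mathscr O_2)$ the condition $\beta_i\le 1$ kills the 2-level bubbles (Case~2.2.2, which requires $\beta_i>1$ by Lemma~\ref{lemma one.level}(3)). In both regimes the tree collapses to a single level and Proposition~\ref{pro uk not converge and one level} yields the clean identity that $\mathcal A_m$ forbids. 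The non-collapsing case~(a) of Theorem~\ref{theorem metric convergence on S2} is likewise excluded: it too requires some $\beta_i>1$ (Proposition~\ref{pro Theta's value when uk converges}), impossible under $(\mathscr O_2)$, and produces smooth bubbles, impossible under $(\mathscr O_1)$.
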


	In fact,  stronger regularity on $K$ allows us to  drop the assumption ($\mathscr{O}_1$) and ($\mathscr{O}_2$) in Theorem \ref{thm openness of nonexistence K C0}: 
	
	\begin{theorem}
		\label{thm openness of nonexistence K C1}
		Let $K$ lie in the set $C^{1,+}(\mathbb{S}^{2})$ of positive $C^1$-smooth functions on $\mathbb S^2$. Suppose  $\beta =(\beta_1,\cdots,\beta_m) \in \mathcal{A}_m$.
		If the equation 
		\begin{align*}
		-\Delta_{\mathbb{S}^{2}} u=Ke^{2u}-2\pi\sum_{i=1}^{m} \beta_i\delta_{p_i}-1
		\end{align*}
		has no solutions in $\cap_{p\in [1,2)} W^{1,p}(\mathbb{S}^{2},g_{\mathbb{S}^{2}})$, then there exists a neighbourhood $\mathscr{U}$ of  $(K,\beta)$ in $C^{1}(\mathbb{S}^{2}) \times (-1,\infty)^{\times m}$ such that for any $( \tilde{K},\tilde{\beta} ) \in \mathscr{U}$, 
		\begin{align*}
		-\Delta_{\mathbb{S}^{2}} u=\tilde{K} e^{2u}-2\pi\sum_{i=1}^{m} \tilde{\beta}_i\delta_{p_i}-1
		\end{align*}
		has no solutions in $\cap_{p\in [1,2)} W^{1,p}(\mathbb{S}^{2}, g_{\mathbb{S}^{2}})$. 
	\end{theorem}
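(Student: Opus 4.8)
The plan is to argue by contradiction using a compactness/convergence theorem for the family of equations, exactly as one does for Theorem~\ref{thm openness of nonexistence K C0}, but now exploiting the extra $C^1$ regularity of $K$ to dispense with the sign and size restrictions $(\mathscr{O}_1)$, $(\mathscr{O}_2)$. Suppose the conclusion fails: then there is a sequence $(\tilde K_n,\tilde\beta_n)\to(K,\beta)$ in $C^1(\mathbb S^2)\times(-1,\infty)^{\times m}$ together with solutions $u_n\in\cap_{p\in[1,2)}W^{1,p}$ of
\begin{align*}
-\Delta_{\mathbb S^2}u_n=\tilde K_n e^{2u_n}-2\pi\sum_{i=1}^m(\tilde\beta_n)_i\delta_{p_i}-1.
\end{align*}
After subtracting the Green's-function singular parts $2\pi\sum_i(\tilde\beta_n)_i G(\cdot,p_i)$, one reduces to a sequence of smooth solutions $w_n$ on the punctured sphere of an equation $-\Delta w_n=h_n e^{2w_n}-1$ with $h_n=\tilde K_n\,e^{\text{(smooth part)}}|z_i|^{2(\tilde\beta_n)_i}$ converging in the appropriate weighted sense to $h_\infty$. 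The total curvature is bounded, $\int \tilde K_n e^{2u_n}=4\pi\big(1+\sum_i(\tilde\beta_n)_i/2\big)\to 4\pi(1+\sum_i\beta_i/2)$, so the conformal factors $e^{2u_n}\,dV$ have uniformly bounded mass and one may pass to a bubble-tree limit.

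Next I would invoke the convergence theorem and the fine bubble-tree analysis stated earlier in the paper (the area identity and the classification of possible limits — in particular Corollary~3.8's constraint that the number $\mathcal G$ of smooth bubbles plus the number $\mathcal B$ of singular bubbles equals $1+\tfrac12\sum\beta_i-\eta$ for some $\eta\in\Gamma_{\mathcal B}(\beta)$, together with $\mathcal G\le 1+\tfrac12\sum\beta_i$). The hypothesis $\beta\in\mathcal A_m$ is precisely engineered so that none of these arithmetic identities can hold: for every admissible subset $J\subset\mathbb F(\beta)$ and every integer $k\in\{0,1,\dots,[\tfrac12\chi(\mathbb S^2)]\}$ one has $\tfrac12\chi(\mathbb S^2,\beta)-\sum_{j\in J}\beta_j\ne |J|+k$, and $\tfrac12\chi(\mathbb S^2,\beta)\notin\mathbb N$. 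Hence the only way the bubble-tree limit can be consistent is that no blow-up occurs at all: the masses do not concentrate, $w_n$ (hence $u_n$) is bounded in $W^{1,p}$ for $p\in[1,2)$, and a standard elliptic/compactness argument extracts a subsequence converging to a solution $u_\infty$ of the limit equation with data $(K,\beta)$. This contradicts the assumed nonexistence and proves the theorem.

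The one place where the $C^1$ hypothesis is essential — and where the argument genuinely differs from Theorem~\ref{thm openness of nonexistence K C0} — is in \emph{ruling out the smooth bubbles} and more generally in controlling the ``neck'' regions and the location of concentration points that are \emph{not} among the $p_i$. With only $C^0$ control on $K$ one cannot in general exclude a standard spherical bubble forming at a regular point (the Pohozaev/Kazdan--Warner obstruction degenerates), which is why $(\mathscr{O}_1)$ or $(\mathscr{O}_2)$ had to be imposed there; with $K\in C^1$ one has a genuine local Pohozaev identity at any blow-up point $q\notin\{p_i\}$, forcing $\nabla K(q)=0$ in a way that — combined with the area identity — still feeds into the same arithmetic contradiction, or more precisely the $C^1$ bound upgrades the convergence of $h_n$ enough that the bubble-tree energy quantization is exactly $4\pi\cdot(\text{integer or }\beta\text{-shifted integer})$ with no anomalous contributions. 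Concretely I would: (i) set up the Green's-function reduction and mass bound; (ii) apply the convergence theorem to get a bubble tree; (iii) use the $C^1$-Pohozaev identity to pin down the energy of each bubble to the quantized values; (iv) apply the area identity/Corollary~3.8 to express $\mathcal G+\mathcal B$ arithmetically; (v) observe $\beta\in\mathcal A_m$ forbids every such value, so $\mathcal G=\mathcal B=0$; (vi) conclude $W^{1,p}$-compactness and extract a limiting solution, contradicting the hypothesis.

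The main obstacle, as indicated, is step~(iii)–(iv): making the bubble-tree energy accounting \emph{exact} rather than merely approximate, i.e. showing that in the limit the accumulated area/curvature splits as $4\pi$ times the contributions of finitely many standard (possibly singular) bubbles with \emph{no residual loss in the necks}, so that the total-curvature identity $1+\tfrac12\sum\beta_i=\mathcal G+\mathcal B+(\text{body term})$ holds on the nose. This is where the delicate part of the paper's ``area identity'' does its work, and where the improved regularity of $K$ removes the slack that forced the extra hypotheses in the $C^0$ version.
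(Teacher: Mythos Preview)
Your overall strategy is correct and matches the paper's: argue by contradiction, apply the bubble-tree convergence analysis, use $\beta\in\mathcal A_m$ to obtain an arithmetic contradiction with the bubble count, conclude there are no bubbles, and extract a limiting solution for $(K,\beta)$. The Green's-function reduction you sketch is harmless but unnecessary; the paper works directly with $u_k$.

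However, your description of where the $C^1$ hypothesis enters is off in two places, and one of them is a genuine gap. First, the Pohozaev argument in the paper is not the classical one yielding $\nabla K(q)=0$ at regular blow-up points; it is an \emph{annular} identity (Proposition~\ref{prop Pohozaev identity}) applied at \emph{every} blow-up point $x_0$, singular or not. Comparing $\lim_{s\to0}P_k(s)$ with $\lim_{t\to0}\lim_k P_k(t)$ produces an exact quadratic relation between $\Theta(x_0)/2\pi$ and $\beta_i$ (or $0$), which pins the bubble count at $x_0$ to exactly $1$ at regular points, exactly $1$ for a singular bubble at $p_i$, and exactly $\beta_i+1$ for smooth bubbles at $p_i$ (Proposition~\ref{pro v's equation when Kk converges in C1}). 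This exact count, not a no-neck-loss statement, is what feeds Proposition~\ref{pro uk not converge and one level} and the $\mathcal A_m$ contradiction; the area identity (Theorem~\ref{theorem area convergence of bubble tree}) is already available in $C^0$ and is not where the $C^1$ gain lies.

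Second, and more seriously, you do not address the \emph{non-collapsing} alternative, case~(a) of Theorem~\ref{theorem metric convergence on S2}: $u_k\to u$ weakly with bubbles at some $p_i$ having $\beta_i>1$, where $u$ solves the equation for the \emph{shifted} divisor $\sum(\beta_i-2n_i)\delta_{p_i}$. That limit is not a solution for the original $(K,\beta)$, so it does not contradict the hypothesis, and the arithmetic identity you invoke in step~(iv) (coming from Proposition~\ref{pro uk not converge and one level}) is only valid in the collapsing case $c_k\to-\infty$. The paper disposes of this via Lemma~\ref{lemma one level bubble K C1}: the same annular Pohozaev identity, applied when $u_k$ converges weakly, forces $\lambda'=\lambda-2s=-\lambda-2<-1$, contradicting Lemma~\ref{lemma removable singularity}. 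Thus under $C^1$ convergence any bubble formation forces collapse, after which your steps~(iv)--(vi) go through. You need to insert this as a separate step between your~(ii) and~(iii).
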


	A key ingredient in proving Theorem \ref{thm  openness of nonexistence K C0} is the compactness statement in Theorem \ref{theorem metric convergence on S2} below. Fix $m$ distinct points $p_1,...,p_m$ on $\mathbb S^2$. Let $\beta^k=(\beta^k_1,...,\beta^k_m)\in\mathbb R^m$ with $\beta^k\to\beta\in\mathbb R^m$ as $k\to\infty$ and ${\mathfrak D}^k=\beta_1^{k} {p_1}+\cdots+\beta_m^{k} {p_m}$. For a sequence of conformal metrics $g_k=e^{2u_k}g_{\mathbb S^2}$ representing ${\mathfrak D}^k$ with curvature $K_k\to K>0$, the theorem says that (up to passing to subsequences) either the sequence converges to a conformal metric with curvature $K$ representing a divisor $\mathfrak D=\sum^m_{i=1}(\beta_i -2n_i)p_i$ where $n_i\in\mathbb N\cup\{0\}$, or the sequence collapses to 0-measure but after normalization it converges to a conformal metric representing a nontrivial divisor (possibly different from $\mathfrak D$) on $\mathbb S^2$ with curvature 0. 
	
	Suppose that the curvature measures $\K_{g_k}$ of $g_k$ (or a subsequence under consideration) converge weakly. 
	The function 
	$\Theta(x):\mathbb{S}^{2} \rightarrow \mathbb{R}$ defined by 
	$$
	\Theta(x )=\lim_{r \rightarrow 0} \lim_{k \rightarrow \infty} \mathbb{K}_{g_{k}}( B_{r}^{g_{\mathbb{S}^{2}}}(x)  ). 
	$$
	reveals the curvature concentration at $x$ and we can show that
	$\{x: \Theta(x) \neq 0 \}$ has only finite elements, hence the sum $\sum_{x \in \mathbb{S}^{2}} \Theta(x) \delta_{x}$ is well-defined. 
	The value of $\Theta(x)$ can be calculated precisely by analyzing the bubble tree structure carefully (see Proposition \ref{pro theta's value when no bubble}, Proposition \ref{pro Theta's value when uk converges} and Proposition \ref{pro v's equation when uk not converge}). 
	
	\begin{theorem}
		\label{theorem metric convergence on S2}
		Let $p_{1}, ... , p_{m}$ be distinct points on $\mathbb S^2$. Suppose that $g_k=e^{2u_k}g_{\mathbb{S}^2}$ and $u_{k}$ satisfies 
		\begin{align*}
		-\Delta_{\mathbb{S}^{2}} u_{k}=K_{k} e^{2u_{k}}-1-2 \pi \sum_{i=1}^{m} \beta_{i}^{k} \delta_{p_{i}},
		\end{align*}
		in the sense of distribution, where 
		\begin{itemize}
			\item[(A1)] $K_k\in C^0(\mathbb S^2)$ and $K_k\rightarrow K>0$ in $C^0(\mathbb{S}^2)$,
			\item[(A2)] $\beta_i^{k} \to \beta_i>-1$,
			\item[(A3)] $\chi(\mathbb{S}^2,\beta)=2+\sum\limits_{i=1}^m\beta_i>0$.
		\end{itemize}
		Then after passing to a subsequence 
		one of the following holds:
		\begin{itemize}
			\item[(a) ] If $u_{k} \rightarrow u$ weakly in $\cap_{p\in{(1,2)}}W^{1,p}(\mathbb{S}^{2},g_{\mathbb{S}^{2}})$, 
			then $u$ solves
			\begin{align*}
			-\Delta_{\mathbb{S}^{2}} u=K e^{2u}+\sum_{x \in \mathbb{S}^{2}} \Theta(x) \delta_{x}-1. 
			\end{align*}
			$\{u_{k}\}$ can only has bubbles at $p_{i}$ with $\beta_{i}>1$, and the bubble tree at each such point has  only one level. Furthermore, the total number of bubbles, saying $s$, has an upper bound:
			$$
			s \leq \frac{1}{2}\chi(S^2,\beta). 
			$$

			\item[(b)]  If $u_{k}-c_{k} \rightarrow v$ weakly in $\cap_{p\in{(1,2)}}W^{1,p}(\mathbb{S}^{2},g_{\mathbb{S}^{2}})$ with $c_{k} \rightarrow -\infty $, 
			then 
			\begin{align*}
			-\Delta_{\mathbb{S}^{2}} v=\sum_{x \in \mathbb{S}^{2}} \Theta(x) \delta_{x}-1.
			\end{align*} 
			Each bubble tree has at most two levels, and 
			$$
			s_{1}-s_{2}=\frac{1}{2}\chi(S^2,\beta)-\sum_{ \{ i: \{u_k\}\text{ has a singular bubble at }p_i \}  }\beta_i,
			$$
			where $s_{1}$ and $s_{2}$ denote the total number of bubbles of $\{u_{k}\}$ at $1$-level and $2$-level, respectively. 

		\end{itemize}
	\end{theorem}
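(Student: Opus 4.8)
The plan is to run a Brezis--Merle / Li--Shafrir blow-up analysis adapted to the conical points, organized around the weak limit of the curvature measures $\mathbb{K}_{g_k}:=K_ke^{2u_k}\,dV_{g_{\mathbb{S}^2}}-2\pi\sum_i\beta_i^k\delta_{p_i}$. First I would integrate the equation against $1$ to obtain $\int_{\mathbb{S}^2}K_ke^{2u_k}\,dV_{g_{\mathbb{S}^2}}=2\pi\chi(\mathbb{S}^2,\beta^k)\to 2\pi\chi(\mathbb{S}^2,\beta)>0$; since $K_k\to K>0$ in $C^0$ one has $\min K_k\ge\tfrac12\min K>0$ eventually, so $\mathrm{Area}(g_k)$ is bounded above and $\mathbb{K}_{g_k}$ has uniformly bounded total variation, and after a subsequence $\mathbb{K}_{g_k}\rightharpoonup\mu$ weakly-$*$. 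Writing $c_k:=\tfrac1{4\pi}\int_{\mathbb{S}^2}u_k\,dV_{g_{\mathbb{S}^2}}$, Jensen's inequality gives $e^{2c_k}\le\tfrac1{4\pi}\mathrm{Area}(g_k)$, so $c_k$ is bounded above, and after a further subsequence either $c_k$ stays bounded or $c_k\to-\infty$ (so the case $c_k\to+\infty$ is ruled out). Since $\Delta u_k$ is bounded in the space of Radon measures, the standard $W^{1,p}$-estimate for the Laplacian with measure data bounds $u_k-c_k$ in $\cap_{p\in(1,2)}W^{1,p}(\mathbb{S}^2,g_{\mathbb{S}^2})$; extracting a weak limit gives $u$ in the first case and $v$ in the second --- this is the dichotomy (a)/(b).

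Next I would establish $\varepsilon$-regularity: there is $\varepsilon_0>0$ such that if $\int_{B_{2r}(x_0)}K_ke^{2u_k}\,dV<\varepsilon_0$ for a ball disjoint from $\{p_i\}$ --- with the threshold at a ball centered at $p_i$ lowered to a positive number, possible because $\beta_i>-1$ keeps $|z|^{2\beta_i}$ subcritically integrable --- then $\sup_{B_r(x_0)}(u_k-c_k)\le C$, so $u_k-c_k$ subconverges in $C^0_{loc}$ there; this is the Brezis--Merle lemma and its singular analogue. Then the set $S$ of points carrying at least $\varepsilon_0$ of the curvature mass is finite, $u_k-c_k$ converges in $C^0_{loc}(\mathbb{S}^2\setminus(S\cup\{p_1,\dots,p_m\}))$, an unloaded $p_i$ is removed by the same bound, and $\mu$ is smooth off $S\cup\{p_i\}$, so $\Theta$ is supported on a finite set and $\sum_x\Theta(x)\delta_x$ is meaningful. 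In case (a), boundedness of $c_k$ forces $e^{2u_k}\to e^{2u}$ a.e. (Rellich), with $\int Ke^{2u}<\infty$ by Fatou, so $\mu=Ke^{2u}\,dV_{g_{\mathbb{S}^2}}+\sum_x\Theta(x)\delta_x$ and passing to the limit yields $-\Delta_{\mathbb{S}^2}u=Ke^{2u}+\sum_x\Theta(x)\delta_x-1$; Brezis--Merle regularity then presents $u$ as a conical metric near each loaded point, the local integrability of $e^{2u}$ forcing the new weights to exceed $-1$ and identifying the limiting divisor as $\sum_i(\beta_i-2n_i)p_i$ with $n_i\ge0$. In case (b), $e^{2u_k}=e^{2c_k}e^{2(u_k-c_k)}\to0$ in $L^1$ --- $e^{2(u_k-c_k)}$ is bounded in every $L^q$ by Moser--Trudinger while $e^{2c_k}\to0$ --- so $\mu$ is purely atomic and $-\Delta_{\mathbb{S}^2}v=\sum_x\Theta(x)\delta_x-1$.

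The remaining and most substantial step is the bubble-tree analysis. At each $x_0\in S$ one rescales $u_k$ (in the coordinate adapted to $|z|^{2\beta_i}$ when $x_0=p_i$) to extract bubbles and iterates. The inputs I would use are: the Chen--Li classification of finite-mass entire solutions of $-\Delta U=K_0e^{2U}$ (round metrics, curvature mass $4\pi$); the Prajapat--Tarantello / Troyanov classification of finite-mass entire solutions of $-\Delta U=K_0e^{2U}|y|^{2\gamma}$ (footballs of cone angle $2\pi(1+\gamma)$, curvature mass $4\pi(1+\gamma)$); the ``no curvature on necks'' lemma for the annuli between consecutive bubbles; and the rigidity theorems of W.X.~Chen--C.~Li (no conformal spherical metric with a single conical point) and Troyanov (rigidity of two conical points) to exclude degenerate limits. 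In case (a), a neck analysis --- a bubble whose neck drains all of the curvature would force $c_k\to-\infty$ --- together with the classification shows that bubbles can accumulate only over a $p_i$ and, since the residual weight $\beta_i-2n_i$ (with $\Theta(p_i)=4\pi n_i-2\pi\beta_i$) must exceed $-1$, only when $\beta_i>1$; each such bubble is then a round metric glued at a single point, so the tree there has one level, and $4\pi s=2\pi\chi(\mathbb{S}^2,\beta)-\int_{\mathbb{S}^2}Ke^{2u}\,dV_{g_{\mathbb{S}^2}}<2\pi\chi(\mathbb{S}^2,\beta)$ since $\int Ke^{2u}>0$, giving $s\le\tfrac12\chi(\mathbb{S}^2,\beta)$. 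In case (b), the collapsed base produces a first level of bubbles of constant curvature $K(x_0)$ --- a round metric at a regular $x_0$, a football (the ``singular bubble'') at $x_0=p_i$ --- a second level can occur only at the junction/cone points of the level-$1$ footballs, and the classification together with the non-degeneracy of the round metric forbids a third level; a careful level-by-level accounting of the bubble curvatures and the cone deficits $-2\pi\beta_i$, as carried out in Propositions \ref{pro theta's value when no bubble}, \ref{pro Theta's value when uk converges} and \ref{pro v's equation when uk not converge}, then produces the value of $\Theta$ at every point and the identity $s_1-s_2=\tfrac12\chi(\mathbb{S}^2,\beta)-\sum_{i:\,\{u_k\}\text{ has a singular bubble at }p_i}\beta_i$.

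The hard part is this last step: proving necks carry no curvature in the conical setting, pinning down which football limits arise over the $p_i$ and why the tree stops after one level in case (a) and two in case (b), and getting the area bookkeeping right (in particular the minus sign on $s_2$ in the case-(b) identity). This is the ``fine analysis of the bubble trees and area identity'' advertised in the abstract; once it is in place, the rest is standard elliptic theory for equations with measure data.
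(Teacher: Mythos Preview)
Your outline is essentially the paper's own architecture: mass/area bounds $\Rightarrow$ dichotomy via the mean $c_k$ $\Rightarrow$ $\varepsilon$-regularity $\Rightarrow$ bubble extraction with the Chen--Li and Troyanov classifications $\Rightarrow$ computation of $\Theta$ and the resulting identities. Your derivation of the equations for $u$ and $v$, and of the bound $4\pi s=2\pi\chi(\mathbb S^2,\beta)-\int Ke^{2u}<2\pi\chi(\mathbb S^2,\beta)$ in case~(a), is exactly right.

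The one place where your description stays heuristic while the paper is sharp is the neck step. You write that ``a bubble whose neck drains all of the curvature would force $c_k\to-\infty$'' and then, separately, that the residual weight $\beta_i-2n_i>-1$ forces $\beta_i>1$. The paper does all of this in one stroke via Proposition~\ref{prop non-positive on neck}: for a blowup sequence $(x_k,r_k)$ at $x_0$ one has
\[
\liminf_{r\to0}\liminf_{k\to\infty}\mathbb K_{g_k}\bigl(D_r(x_0)\setminus D_{r_k/r}(x_k)\bigr)\ \le\ -2\pi\bigl(2+\Res(u,x_0)+\Res(u',\infty)\bigr),
\]
and since both residues exceed $-1$ the right side is strictly negative. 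If the conical point $y_k$ were \emph{not} in the neck, the left side would be $\ge 0$ (the measure there is $K_ke^{2u_k}dx\ge 0$), a contradiction; this is Lemma~\ref{lemma one.level}(2) and it simultaneously forces $x_0\in\{p_i\}$, puts $y_k$ in the neck (so the bubble equation has no $\delta$-term and the bubble is smooth with $\Res(u',\infty)=0$), and, reinserting $-2\pi\beta_i$ as the lower bound on the neck curvature, yields $\beta_i>1$ directly. Your residual-weight argument for $\beta_i>1$ is valid, but it presupposes $\Theta(p_i)=4\pi n_i-2\pi\beta_i$, which already needs the ``singular point in the neck $\Rightarrow$ smooth bubble'' conclusion.

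The other tool you invoke only implicitly is the \emph{area identity} (Theorem~\ref{theorem area convergence of bubble tree}): $\lim_k\mathrm{Area}(D_{1/2},g_k)=\mathrm{Area}(D_{1/2},g_\infty)+\sum_i\mathrm{Area}(\mathbb R^2,e^{2v_i}g_{\mathrm{euc}})$. This is what turns ``no energy on necks'' into the exact values of $\Theta$ in Propositions~\ref{pro Theta's value when uk converges} and~\ref{pro v's equation when uk not converge}, and in particular produces the minus sign in $s_1-s_2$ in case~(b) (the singular $1$-level bubble has mass $4\pi-2\tau$ with $\tau=4\pi s'-2\pi\beta_i$, so the $s'$ smooth $2$-level bubbles are subtracted). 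Once Lemma~\ref{lemma one.level} and the area identity are in place, everything else in your sketch goes through as written.
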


	\vspace{.1cm}
	We will show that (a) in the above theorem does not happen and the bubble trees have only one level, 
	provided one of the following holds:
	\begin{enumerate}
		\item  $\chi(\mathbb{S}^{2},\beta)<2$
		\item $\beta_{i} \leq 1$ for any $i \in \{1,\cdots,m\}$.
	\end{enumerate}
	For these two cases, the number of bubbles is bounded by the divisor:
	$$
	\#\{\text{bubbles}\}\leq 1+\frac{1}{2}\sum_{i=1}^{m} |\beta_i|.
	$$
	
	Convergence of solutions without divisors has been studied extensively, for example, in \cite{Brezis1991, li-Shafrir1994, li1999harnack, ding1999existence, Tarantello2005}, etc. For conformal metrics representing divisors, it is shown in \cite{bartolucci2002, Bartolucci2007} that if $\{u_{k}\}$ has at least one bubble and $K_{k} \rightarrow K$ in $C^{1}(\mathbb{S}^{2})$ (compare to $C^0(\mathbb S^2)$ in Theorem \ref{theorem metric convergence on S2}), then (b) in Theorem \ref{theorem metric convergence on S2} holds and $v$ solves 
	\begin{align}
	-\Delta_{\mathbb{S}^{2}} v=\sum_{j=1}^{s_{1}} ( 4 \pi +4 \pi \beta_{i_{j}}) \delta_{p_{i_{j}}}+\sum_{j=1}^{s_{2}} 4 \pi \delta_{q_{j}}-2 \pi \sum_{i=1}^{m} \beta_{i} \delta_{p_{i}}-1. 
	\end{align}	
	
	Using the techniques developed in this paper, we can give a detailed description on the bubble development under the stronger assumption $K_k\to K$ in $C^1(\mathbb S^2)$:  
	the sequence $\{u_{k}\}$ can only develop bubbles at 1-level, $u_k\rightarrow-\infty$ almost everywhere, and the bubble tree structure can be described in a clear way (see the detailed argument in Proposition \ref{pro v's equation when Kk converges in C1}):
	\begin{enumerate}
		\item $\{u_{k}\}$ has $s_{i}$ smooth bubbles at 1-level at some $p_{i}$ with $s_{i}=\beta_{i}+1$; thus, if $\beta_i\notin\mathbb{N}$, then $\{u_{k}\}$ has no  smooth bubbles at $p_{i}$; 
		\item $\{u_{k}\}$ has one bubble with two singularities at 1-level at some $p_{i}$; 
		\item $\{u_{k}\}$ has one smooth bubble at 1-level at some $q_{j} \notin \{p_{1}, \cdots,p_{m}\}$.
	\end{enumerate}
	
	\vspace{.1cm}
	
	Finally, we would like to mention that Mazzeo-Zhu have developed a theory for moduli of metrics with divisors in \cite{Mazzeo-Zhu} (see the reference therein).

	
	
	\medskip
	
	\noindent{\bf Acknowledgement.} The first author would like to thank Professor Gang Tian for arranging the visits to BICMR in the summers of 2023 and 2024 in the course of this work. All authors are grateful for the wonderful research environment provided by Tsinghua University during their collaboration.

	\section{Preliminary}
	In this section, we collect a few definitions and results from \cite{C-L1,C-L2} and \cite{CLWnegative} that will be used in this paper.

	Let $\Sigma$ be a closed surface with a Riemannian metric $g_0$, the Gauss curvature $K_{g_0}$ and the area element $dV_{g_0}$. 
	Let $\M(\Sigma,g_0)$ be the set of Radon measures $g_u=e^{2u}g_0$ so that there is a {\it signed} Radon measure $\mu(g_u)$ for $u\in L^1(\Sigma,g_0)$ satisfying 
	\begin{equation*}
	\int_\Sigma\varphi \,d\mu(g_u)
	=\int_\Sigma \left(\varphi \,K_{g_0} - u \Delta_{g_0}\varphi\right)dV_{g_0},\s \forall \varphi\in C^\infty_0(\Sigma).
	\end{equation*}
	We write $dV_{g_u}= e^{2u} dV_{g_0}$
	and call the signed Radon measure $\K_{g_u}= \mu(g_u)$ the curvature measure for $g_u$. 
	
	In an isothermal coordinate chart $(x,y)$ for the smooth metric $g_0$, we can write $g_0= e^{2u_0}g_{\euc}$ for some local function $u_0$. So any $g\in\M(\Sigma,g_0)$ is locally expressible as $g=e^{2v}g_{\euc}$, where $v\in L^{1}_{\loc}(\Sigma)$
	and
	\begin{equation}
	-\Delta v \ dxdy =\K_{g_v}
	\end{equation}
	as distributions where $\Delta=\frac{\partial^2}{\partial x^2}+\frac{\partial^2}{\partial y^2}$.
	
	\begin{defi}\label{def represent divisors in distributions}\textnormal{
			Let $(\Sigma,g_0)$ be a surface and $\mathfrak{D}=\sum_{i=1}^m \beta_{i} p_{i}$ be a divisor on $\Sigma$. A Radon measure $g=e^{2u}g_0\in\mathcal{M}(\Sigma,g_0)$ {\it represents $\mathfrak{D}$ with curvature function $K$} if
			$$
			\K_{g}=K e^{2u} dV_{g_{0}} -2\pi\sum_{i=1}^{m} \beta_i\delta_{p_i},\s and\s K e^{2u}\in L^1(\Sigma,g_{0}).
			$$
			In other words, in the sense of distributions, 
			\begin{equation}\label{Gauss.equation}
			-\Delta_{g_0}u=K e^{2u}-K_{g_0}-2\pi\sum_{i=1}^{m} \beta_i\delta_{p_i}.
			\end{equation}}
	\end{defi}
	
	By \eqref{Gauss.equation}, for a closed surface $\Sigma$ we have 
	$$
	\frac{1}{2\pi}\int_{\Sigma}K e^{2u} dV_{g_{0}}=\chi(\Sigma)+\sum_{i=1}^{m}  \beta_i=\chi(\Sigma,\beta).
	$$
	
	For simplicity, we use the notations: $\mathcal{M}(\Omega)=\mathcal{M}( \Omega,g_{\euc}  )$ where $\Omega \subset \mathbb{R}^2$ is a domain and 
	$\mathcal{M}(\mathbb{S}^{2})=\mathcal{M}(\mathbb{S}^{2}, g_{\mathbb{S}^{2}})$ where $g_{\mathbb{S}^{2}}$ is the metric on $\mathbb{S}^{2}$ of curvature 1.  

	\begin{thm}\cite{C-L2}\label{thm convergence of solutions of Radon measure when measure is small}
		Let $\{ \mu_k \}$ be a sequence of signed Radon measures on $D$ with $|\mu_k|(D)<\epsilon_0< \pi$. Suppose that $-\Delta u_k=\mu_k$ holds weakly with $\|\nabla u_k\|_{L^1(D)}<\Lambda$ and $\Area(D,g_k)<\Lambda'$. Then after passing to a subsequence, one of the following holds:
		\begin{itemize}
			\item[(1)]  $u_k\to u$ weakly in $W^{1,p}(D_{{1}/{2}})$ for any $p \in [1,2)$ and  $e^{2 u_k}\to e^{2 u}$ in $L^q(D_{{1}/{2}})$ for some $q>1$;
			\item[(2)] $u_k\to -\infty$ for a.e. $x$ and $e^{2u_k}\to 0$ in $L^q(D_{1/2})$ for some $q>1$.
		\end{itemize}
	\end{thm}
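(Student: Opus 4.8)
The plan is to peel the singular part off $u_k$ by a Newtonian potential and reduce the dichotomy to the behaviour of a single real number, the additive constant of the leftover harmonic function: the smallness $\epsilon_0<\pi$ will control the potential, and the area bound will control the constant. Throughout, $D$ denotes the unit disk and $D_r$ the concentric disk of radius $r$. First I would solve away $\mu_k$: let $w_k$ solve $-\Delta w_k=\mu_k$ in $D$ with zero boundary trace, i.e.\ $w_k(x)=\int_D G_D(x,y)\,d\mu_k(y)$ with $G_D$ the Green function. Standard estimates for the Green potential of a measure (Stampacchia) give $\|w_k\|_{W^{1,p}(D)}\le C(p)\,|\mu_k|(D)\le C(p)\epsilon_0$ for every $p\in[1,2)$. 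Then $h_k:=u_k-w_k$ is distributionally harmonic on $D$, hence smooth harmonic by Weyl's lemma, and $u_k=h_k+w_k$.

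For the potential part, the Brezis--Merle inequality applied to $|\mu_k|$ yields, for every $\delta>0$,
$$
\int_D\exp\!\Big(\frac{(4\pi-\delta)\,|w_k|}{|\mu_k|(D)}\Big)dx\le C(\delta),
$$
and since $|\mu_k|(D)<\epsilon_0<\pi$ this gives a uniform bound $\|e^{2|w_k|}\|_{L^{q'}(D)}\le C$ for some $q'>2$; this is the only place the hypothesis $\epsilon_0<\pi$ is used. Passing to a subsequence, $w_k\rightharpoonup w$ weakly in $W^{1,p}(D)$, so $w_k\to w$ in every $L^r(D)$, $r<\infty$, and a.e.; together with the uniform $L^{q'}$ bound, Vitali's theorem upgrades this to $e^{2w_k}\to e^{2w}$ strongly in $L^q(D)$ for every $q\in(1,q')$, and I fix one such $q$.

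For the harmonic part, $\nabla h_k=\nabla u_k-\nabla w_k$ is bounded in $L^1(D)$ by $\Lambda+C\epsilon_0$, so interior estimates for the harmonic functions $\partial_1h_k,\partial_2h_k$ bound $\nabla h_k$ in $C^\infty(\overline{D_{3/4}})$; writing $h_k=c_k+\tilde h_k$ with $c_k=h_k(0)$ and $\tilde h_k(0)=0$ we get, along a further subsequence, $\tilde h_k\to\tilde h$ in $C^\infty(\overline{D_{1/2}})$, so $u_k=c_k+\tilde h_k+w_k$ on $D_{1/2}$. Now the area bound enters: Jensen's inequality and $\|w_k\|_{L^1(D_{1/2})}\le C$ give $\int_{D_{1/2}}e^{2w_k}\ge c_0>0$ uniformly, hence
$$
\Lambda'>\int_{D_{1/2}}e^{2u_k}\ge e^{2c_k}\Big(\min_{\overline{D_{1/2}}}e^{2\tilde h_k}\Big)\int_{D_{1/2}}e^{2w_k}\ge c_0\,e^{2c_k-C'},
$$
so $c_k$ is bounded above and, after a subsequence, either $c_k\to c\in\mathbb R$ or $c_k\to-\infty$. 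If $c_k\to c$, then $u_k\rightharpoonup u:=c+\tilde h+w$ weakly in $W^{1,p}(D_{1/2})$ for every $p<2$, and since $e^{2(c_k+\tilde h_k)}\to e^{2(c+\tilde h)}$ uniformly on $D_{1/2}$ while $e^{2w_k}\to e^{2w}$ in $L^q(D_{1/2})$, we obtain $e^{2u_k}\to e^{2u}$ in $L^q(D_{1/2})$, which is alternative (1). If $c_k\to-\infty$, then $u_k\to-\infty$ a.e.\ on $D_{1/2}$ (where $w$ is finite), and $\|e^{2u_k}\|_{L^q(D_{1/2})}=e^{2c_k}\|e^{2\tilde h_k+2w_k}\|_{L^q(D_{1/2})}\le C\,e^{2c_k}\to0$, which is alternative (2).

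The main obstacle is the lack of any normalization of $u_k$: only $\|\nabla u_k\|_{L^1}$, not $\|u_k\|_{L^1}$, is bounded, so one cannot apply elliptic estimates to $u_k$ directly and must isolate the scalar $c_k$ and feed $\Area(D,g_k)<\Lambda'$ — through the uniform lower bound $\int_{D_{1/2}}e^{2w_k}\ge c_0>0$ — both to rule out $c_k\to+\infty$ (a spurious third alternative in which $e^{2u_k}\to+\infty$) and to cleanly separate the two stated cases. The secondary technical point is converting the sharp smallness $\epsilon_0<\pi$ into the uniform higher integrability $\|e^{2w_k}\|_{L^{q'}}\le C$, $q'>1$, which is precisely what promotes the a.e.\ convergence of $e^{2w_k}$ to $L^q$ convergence; the signs in $\mu_k$ cause no extra trouble since one only ever uses $|\mu_k|$.
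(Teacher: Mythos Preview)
Your argument is correct and follows essentially the same strategy as the paper's: isolate a scalar constant from $u_k$, use the Brezis--Merle inequality with the threshold $\epsilon_0<\pi$ to get uniform higher integrability of $e^{2(u_k-c_k)}$, bound $c_k$ from above via Jensen and the area bound, and split according to whether $c_k$ stays bounded or drifts to $-\infty$. The only cosmetic difference is that you normalize via the Green-potential/harmonic decomposition $u_k=w_k+h_k$ with $c_k=h_k(0)$, whereas the paper subtracts the mean $c_k=\fint_{D_{1/2}}u_k$ and applies its Lemma on $\int e^{p|u|}$ directly to $u_k-c_k$; both routes are standard and lead to the same dichotomy.
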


	\begin{lem}\cite{C-L2}
		\label{lemma gradient estimate on Riemann surfaces}
		Let $\mu$ be a signed Radon measure defined on a closed surface $(\Sigma,g_{0})$ and $u \in L^{1}(\Sigma,g_{0})$ solves $-\Delta_{g_{0}} u= \mu$ weakly. Then, for any $r>0$ and $q \in [1,2)$, there exists $C=C(q,r,g_{0})$ such that 
		\begin{align*}
		r^{q-2} \int_{ B_{r}^{g_{0}}(x)} |\nabla_{g_{0}} u |^{q} \leq C (|\mu|(\Sigma) )^{q}. 
		\end{align*}
	\end{lem}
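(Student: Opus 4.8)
The final statement to prove is Lemma (gradient estimate on Riemann surfaces): for a signed Radon measure $\mu$ on a closed surface $(\Sigma,g_0)$ with $-\Delta_{g_0}u=\mu$ weakly, and $q\in[1,2)$, one has the scale-invariant bound $r^{q-2}\int_{B_r^{g_0}(x)}|\nabla_{g_0}u|^q\le C(q,r,g_0)(|\mu|(\Sigma))^q$.

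\medskip

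\noindent\emph{Approach.} The plan is to pass to the Green's representation on the closed surface and differentiate under the integral sign, then estimate the resulting convolution against the Green kernel gradient using the scale-invariant integrability of $|\nabla_x G_{g_0}(x,\cdot)|$ together with a Minkowski/Young-type inequality for measures. First I would fix a reference: since $\Sigma$ is closed and $\int_\Sigma d\mu$ need not vanish, one cannot solve $-\Delta_{g_0}u=\mu$ directly, so write $\mu = \mu - \bar\mu\, dV_{g_0}$ where $\bar\mu = \mu(\Sigma)/\mathrm{Vol}(\Sigma,g_0)$; the solution $u$ differs from the Green potential $u_0(x) := \int_\Sigma G_{g_0}(x,y)\,d(\mu-\bar\mu\,dV_{g_0})(y)$ by a constant (which is killed by $\nabla_{g_0}$), so $\nabla_{g_0}u = \nabla_{g_0}u_0$ and it suffices to bound $\nabla_{g_0}u_0$. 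Then $\nabla_{g_0,x}u_0(x) = \int_\Sigma \nabla_{g_0,x}G_{g_0}(x,y)\,d(\mu-\bar\mu\,dV_{g_0})(y)$.

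\medskip

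\noindent\emph{Key steps, in order.} (1) Recall the standard Green-function bound on a closed surface: $|\nabla_{g_0,x}G_{g_0}(x,y)|\le C(g_0)\,d_{g_0}(x,y)^{-1}$, and more importantly that for any $q\in[1,2)$ the function $y\mapsto |\nabla_{g_0,x}G_{g_0}(x,y)|^q$ is in $L^1(\Sigma,g_0)$ with $\Sigma$-uniform norm, and that $\sup_x\int_{B_r^{g_0}(x)}|\nabla_{g_0,x}G_{g_0}(x,y)|^q\,dV_{g_0}(y)\le C(q,g_0)\,r^{2-q}$ by integrating $d_{g_0}(x,y)^{-q}$ in polar coordinates (valid since $q<2$). (2) Estimate $\|\nabla_{g_0}u_0\|_{L^q(B_r^{g_0}(x))}$ by the integral Minkowski inequality: writing $h(x,y):=\nabla_{g_0,x}G_{g_0}(x,y)$, one has
\[
\Big(\int_{B_r^{g_0}(x)}|\nabla_{g_0}u_0(z)|^q\,dV_{g_0}(z)\Big)^{1/q}
\le \int_\Sigma \Big(\int_{B_r^{g_0}(x)}|h(z,y)|^q\,dV_{g_0}(z)\Big)^{1/q}\,d|\mu-\bar\mu\,dV_{g_0}|(y).
\]
(3) For each fixed $y$, bound the inner integral: $B_r^{g_0}(x)\subset B_{d_{g_0}(x,y)+r}^{g_0}(y)$, and splitting into the region near $y$ (radius $\le 2r$) and away from $y$ gives $\int_{B_r^{g_0}(x)}|h(z,y)|^q\,dV_{g_0}(z)\le C(q,g_0)\,r^{2-q}$ uniformly in $x,y$ — near $y$ use the local singularity estimate (integrable since $q<2$), away from $y$ use $|h|\le C/d_{g_0}$ bounded below times $\mathrm{Vol}(B_r^{g_0}(x))\le Cr^2$, both $\lesssim r^{2-q}$. (4) Plug back in: the outer integral contributes $|\mu-\bar\mu\,dV_{g_0}|(\Sigma)\le |\mu|(\Sigma)+|\bar\mu|\,\mathrm{Vol}(\Sigma,g_0)\le C(g_0)\,|\mu|(\Sigma)$, whence $\|\nabla_{g_0}u_0\|_{L^q(B_r^{g_0}(x))}\le C(q,g_0)\,r^{(2-q)/q}\,|\mu|(\Sigma)$, i.e. $r^{q-2}\int_{B_r^{g_0}(x)}|\nabla_{g_0}u|^q\le C(q,r,g_0)(|\mu|(\Sigma))^q$, as claimed. (One keeps the $r$-dependence explicit in $C$ rather than insisting on full scale invariance, matching the statement.)

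\medskip

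\noindent\emph{Main obstacle.} The substantive point is step (3): getting the inner integral $\int_{B_r^{g_0}(x)}|\nabla_{g_0,x}G_{g_0}(z,y)|^q\,dV_{g_0}(z)$ bounded by $C r^{2-q}$ \emph{uniformly in both} $x$ and $y$, which is what makes the final constant independent of the measure and gives the correct $r$-power. This requires the parametrix expansion of $G_{g_0}$ on a closed surface — $G_{g_0}(z,y) = -\tfrac{1}{2\pi}\log d_{g_0}(z,y) + (\text{bounded } C^1 \text{ remainder})$ — so that $|\nabla_{g_0,z}G_{g_0}(z,y)|\le C(g_0)\big(d_{g_0}(z,y)^{-1}+1\big)$, and then a careful polar-coordinate computation on the geodesic ball; the worst case is when $y$ lies inside or near $B_r^{g_0}(x)$, handled by the $q<2$ integrability of $d^{-q}$. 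Everything else (removing the mean, discarding the additive constant under the gradient, the Minkowski inequality for the vector-valued convolution against a measure) is routine. Alternatively, one can bypass the global Green function by a covering argument: cover $\Sigma$ by finitely many isothermal charts, localize $u$ by a cutoff, apply the Euclidean estimate of the type in \cite{C-L2} on each chart, and sum — but the Green-function route is cleaner and directly yields the stated form.
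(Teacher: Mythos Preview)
The paper does not supply its own proof of this lemma: it is quoted verbatim from \cite{C-L2} as a preliminary fact, with no argument given. So there is no ``paper's proof'' to compare against.

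Your Green-function approach is correct and is the standard route. One minor remark: your precaution about subtracting the mean $\bar\mu$ is unnecessary here, since the hypothesis already gives you a weak solution $u\in L^{1}$ of $-\Delta_{g_0}u=\mu$ on the \emph{closed} surface, and testing against $\varphi\equiv 1$ forces $\mu(\Sigma)=0$ automatically. Thus $u$ differs from the Green potential $\int_\Sigma G_{g_0}(\cdot,y)\,d\mu(y)$ by a constant, and you can pass directly to $\nabla_{g_0}u(x)=\int_\Sigma \nabla_{g_0,x}G_{g_0}(x,y)\,d\mu(y)$ without the $\bar\mu$ correction. The rest of your argument---the parametrix bound $|\nabla_{g_0,z}G_{g_0}(z,y)|\le C(g_0)\,d_{g_0}(z,y)^{-1}$, Minkowski's integral inequality, and the near/far splitting to get $\int_{B_r^{g_0}(x)}|\nabla_{g_0,z}G_{g_0}(z,y)|^{q}\,dV_{g_0}(z)\le C(q,g_0)\,r^{2-q}$ uniformly in $x,y$---is exactly what is needed and goes through as you describe.
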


	We now discuss terminologies needed to describe the {\it bubble-tree} analysis. 
	Let $g_k=e^{2u_k}g_{\euc}\in\mathcal{M}(D)$ with $|\K_{g_k}|(D)<\Lambda$. 
	
	\begin{defi}
		\label{def blowup sequence and bubble}\textnormal{
			A sequence $\{ (x_k,r_k) \}$ is a {\it blowup sequence of $\{u_{k}\}$ at $0$} if $x_k\rightarrow 0$, $r_k\rightarrow 0$ and $u_k'=u_k(x_k+r_kx)+\log r_k$ converges weakly to a function $u$ in $\cap_{p\in[1,2)}W^{1,p}_{\loc}(\R^2)$, and $u$ is called a {\it bubble}. The sequence $\{u_{k}\}$ {\it has no bubble at $0$} if no subsequence of $\{u_{k}\}$ has a blowup sequence at $0$. We say $u_k'$ converges to a {\it ghost bubble} if there exists $c_{k} \rightarrow -\infty$ such that $u_k'-c_{k}$ converges weakly to some function $v$ in $\cap_{p\in[1,2)}W^{1,p}_{\loc}(\R^2)$. }
	\end{defi}

	The ghost bubble has the following property:
	\begin{align*}
	\lim_{r \rightarrow 0} \lim_{k \rightarrow \infty} \int_{ D_{{1}/{r} } \setminus \cup_{z \in \mathcal{S} } D_{r}(z)} e^{ 2u_k'  } =0,
	\end{align*}
	where $\mathcal{S}$ is the set consisting of measure-concentration points:
	$$
	\S= \left\{y: \mu(y) \geq \frac{\epsilon_{0}}{2}     \right\}.
	$$
	where $\mu$ is the weak limit of $|\mathbb{K}_{ e^{ 2 (u_k'-c_k) } g_{\euc} }|$ and $\epsilon_{0}$ is chosen as in Theorem \ref{thm convergence of solutions of Radon measure when measure is small}.
	
	At a point, the sequence $\{u_k\}$ may have more than one blowup sequences, we distinguish them according to the following definitions.
	
	\begin{defi}
		\label{def bubble different}
		\textnormal{Two blowup sequences $\{(x_k,r_k)\}$ and $\{(x_k',r_k')\}$ of $\{u_k\}$ at $0$ are {\it essentially different} if one of the following happens
			$$
			\frac{r_k}{r_k'}\rightarrow\infty,\mbox{ or }\,
			\frac{r_k'}{r_k}\rightarrow\infty,
			\s \mbox{or} \s\frac{|x_k-x_k'|}{r_k+r_k'}
			\rightarrow\infty.
			$$
			Otherwise, they are {\it essentially same}. }
	\end{defi}
	
	\begin{defi}
		\textnormal{We say {\it the sequence  $\{u_k\}$ has $m$ bubbles} if $\{u_k\}$ has $m$ essentially different blowup sequences and no subsequence of $\{u_k\}$ has more than $m$ essentially different blowup sequences.}
	\end{defi}
	
	\begin{defi}
		\label{def bubble on top}
		\textnormal{For two essentially different blowup sequences $\{(x_k,r_k)\}, \{(x_k',r_k')\}$, we say $\{ (x_k',r_k') \}$ is {\it on the top of $\{ (x_k,r_k) \}$} and write $\{ (x_k',r_k')\}<\{ (x_k,r_k)\}$, if
			$\frac{r_k'}{r_k}\rightarrow 0$ and
			$\frac{x_k'-x_k}{r_k}$ converges as $k\to\infty$. }
	\end{defi}
	
	\begin{rem}\label{remark different bubble relationship}\textnormal{
			(1) If $\{ (x_k',r_k') \}<\{ (x_k,r_k) \}$, we have
			$
			l_k:=\frac{r_k'}{r_k}\rightarrow 0$ and $
			y_{k}:=\frac{x_k'-x_k}{r_k}\rightarrow y.
			$
			If we set $v_k=u_k(x_k+r_kx)+\log r_k$ and $v_k'=u_k(x'_k+r_k'x)+\log r_k'$, we can verify $
			v_k'(x)=v_k \left(l_kx+y_k\right)+\log l_{k}$. 
			Then $\{ (y_k,l_k) \}$ is a blowup sequence of $\{ v_k \}$ at $y$, and the limit of $v_k'$ can be considered as a bubble of $\{  v_k \}$.\vspace{.1cm}
			\newline 
			(2) If two essentially different blowup sequences $\{ (x_k,r_k)\}, \{ (x_k',r_k') \}$ are not on the top of each other, then we must have
			$
			\frac{|x_k-x_k'|}{r_k+r_k'} \rightarrow \infty 
			$
			and separation of domains: for any $t>0$, when $k$ is sufficiently large it holds 
			$
			D_{tr_k}(x_k)\cap D_{tr_k'}(x_k')=\emptyset
			$.}
	\end{rem}
	
	\begin{defi}
		\label{def bubble right on top}
		\textnormal{$\{ (x_k',r_k') \}$ is {\it right on the top} of $\{ (x_k,r_k) \}$, if $\{ (x_k',r_k') \}<\{ (x_k,r_k) \}$ and there is no $\{ (x_k'',r_k'') \}$ with
			$
			\{ (x_k',r_k') \}<\{ (x_k'',r_k'') \}<\{ (x_k,r_k) \}.
			$
		}
	\end{defi}
	We define the {\it level of a blowup sequence} and its corresponding bubble as follows:
	
	\begin{defi}
		\label{def bubble level}
		\textnormal{			\begin{enumerate}
				\item A blowup sequence $\{ (x_{k},r_{k}) \} $ of $\{ u_{k} \}$ at the point 0 is {\it at $1$-level}  if there is no $\{ (x_k',r_k') \}$ with $\{ (x_k,r_k) \} < \{ (x_k',r_k')\}$. 
				For $m \geq 2$, a blowup sequence $\{ (x_{k},r_{k}) \} $ is {\it at $m$-level} if there exists a blowup sequence $\{ (x_k',r_k') \} $ which is at $(m-1)$-level such that $\{ (x_{k},r_{k}) \} <\{ (x_k',r_k') \} $. 	
				\item For $m \geq 1$, {\it a bubble is at $m$-level }if it is induced by a blowup sequence  at $m$-level.
		\end{enumerate} }
	\end{defi}
	
	Two essentially different blowup sequences at $m$-level may be  right on the top of two essentially different blowup sequences at $(m-1)$-level. 
	By Remark \ref{remark different bubble relationship}, if $\{ (x_k^1,r_k^1) \},\{ (x_k^2,r_k^2) \}$ are at the same level, then  for any $t>0$, when $k$ is large 
	$$
	D_{tr_k^1}(x_k^1)\cap D_{tr_k^2}(x_k^2) =\emptyset
	$$
	
	Let $ g=e^{2u}g_{\euc} \in \mathcal{M}( D \backslash \{0\})$ with 
	\begin{align*}
	|\mathbb{K}_{g}|( D \backslash \{0\}  ) <\epsilon_{1}<\pi.  
	\end{align*}
	for some $\epsilon_1$. Extend $\K_g$ to a signed Radon measure $\mu$ by taking $\mu(A)=\mathbb{K}_{g}(A \cap (D \backslash \{0\})), \forall A \subset \mathbb{R}^{2}$ and write
	$\mu=\K_g\lfloor (D\backslash\{0\})$.
	By \cite{CLWnegative}, we have the following decomposition:
	\begin{align*}
	u(z)=I_{\mu}(z)+ \lambda \log|z|+w(z), 
	\end{align*}
	where
	\begin{align*}
	I_{\mu}(z)=-\frac{1}{2 \pi} \int_{\mathbb{R}^{2}} \log |z-y| d \mu(y), \quad \lambda=\lim_{{\rm a.e}\,\,r\rightarrow 0}\frac{1}{2 \pi }\int_{\partial D_r}\frac{\partial u}{\partial r},
	\end{align*}
	and $w$ is a  harmonic function on $D\backslash \{0\}$ with $\int_{\partial D_t}\frac{\partial w}{\partial t}=0$. Hence, we can find a holomorphic function $F$ on $D\backslash\{0\}$ with
	$\Re(F)=w$, which means
	\begin{align*}
	(u-I_{\mu})(z)=\Re(F(z))+\lambda \log |z|, \quad z \in D \backslash \{0\}.
	\end{align*}
	
	\begin{lem}\cite{CLWnegative}
		\label{lemma removable singularity}
		\textnormal{(1)} If the area of $D$ is finite, namely,
		\begin{align*}
		\int_{D} e^{2u}<\infty,
		\end{align*}
		then $w$ is smooth on $D$, and $\lambda \geq -1$. 
		Moreover, $g\in \mathcal{M}(D)$ with
		$$
		\K_g=\mu-2\pi\lambda\delta_0.
		$$
		\textnormal{(2)} If we further assume $\mathbb{K}_{g} \geq 0$ on $D \backslash \{0\}$, then $\lambda>-1$. 
	\end{lem}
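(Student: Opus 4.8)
The plan is to analyze the decomposition $u(z)=I_\mu(z)+\lambda\log|z|+\Re F(z)$ near the puncture, using the finite-area hypothesis to control the singular behaviour of $\Re F$ and the sign of $\lambda$. First I would record the standard regularity of the Newtonian-type potential: since $\mu$ is a finite signed Radon measure supported in $\bar D$, the function $I_\mu$ lies in $\cap_{p\in[1,2)}W^{1,p}(D)$, is in $L^q(D)$ for every $q<\infty$, and moreover $e^{2I_\mu}\in L^q_{\loc}$ for $q$ slightly larger than $1$ by the Brezis–Merle / John–Nirenberg estimate, which is applicable because $|\mu|(D)<\epsilon_1<\pi$. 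In particular $I_\mu$ contributes nothing to the singularity at $0$ beyond an integrable term, so the area condition $\int_D e^{2u}<\infty$ forces an integrability constraint on $e^{2(\lambda\log|z|+\Re F(z))}=|z|^{2\lambda}e^{2\Re F(z)}$.

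Next I would pin down $F$. Since $w=\Re F$ is harmonic on the punctured disk with vanishing flux $\int_{\partial D_t}\partial_t w=0$, $F$ extends to a single-valued holomorphic function on $D\setminus\{0\}$ and hence has a Laurent expansion $F(z)=\sum_{n\in\mathbb Z}a_nz^n$. The claim ``$w$ is smooth on $D$'' is equivalent to $a_n=0$ for all $n<0$, i.e. $0$ is a removable singularity of $F$. To see this, suppose $F$ had either a pole or an essential singularity at $0$. In the essential-singularity case, by Casorati–Weierstrass $\Re F$ is unbounded above on every punctured neighbourhood in a way that, combined with the fact that near the origin the factor $|z|^{2\lambda}$ is at worst polynomial, makes $\int_D|z|^{2\lambda}e^{2\Re F}=\infty$; one makes this quantitative by finding a sequence of small annuli on which $\Re F\geq N$ on a definite fraction of the annulus, with $N\to\infty$ fast enough to beat any fixed power of $|z|$. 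In the pole case $F(z)=c z^{-N}+\cdots$ with $N\geq 1$ and $c\neq 0$: then on the sector where $\Re(cz^{-N})>0$ we have $e^{2\Re F(z)}\gtrsim e^{c'|z|^{-N}}$, which again is not integrable against $|z|^{2\lambda}\,dz$ near $0$. Either way we contradict finite area, so $F$—and hence $w$—extends smoothly across $0$; then $g=e^{2(I_\mu+\lambda\log|z|+w)}g_\euc$ and away from $0$ the equation $-\Delta u=\mu$ gives, accounting for the distributional Laplacian of $\log|z|$, exactly $\K_g=\mu-2\pi\lambda\delta_0$ on all of $D$.

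With $w$ smooth, $e^{2u}$ is comparable near $0$ to $|z|^{2\lambda}$ times a positive bounded factor, so $\int_{D}e^{2u}<\infty$ forces $\int_0 r^{2\lambda}r\,dr<\infty$, i.e. $2\lambda+1>-1$, that is $\lambda>-1$; a priori from area finiteness alone one only gets $\lambda\geq -1$ after allowing equality in the borderline integrability, which is the statement in (1). For part (2), under the extra hypothesis $\K_g\geq 0$ on $D\setminus\{0\}$, I would use the Gauss–Bonnet identity $\K_g(D_t)=-\int_{\partial D_t}\partial_r u$ together with $\lambda=\lim_{r\to 0}\frac{1}{2\pi}\int_{\partial D_r}\partial_r u$: nonnegativity of $\K_g$ on annuli makes $t\mapsto -\int_{\partial D_t}\partial_r u$ monotone, forcing $-2\pi\lambda=\K_g(\{0\})\le 0$ first, and then one rules out $\lambda=-1$ exactly because $\lambda=-1$ would make the area $\int_0 r^{-2}r\,dr=\int_0 r^{-1}\,dr$ diverge, contradicting $\int_D e^{2u}<\infty$. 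Hence $\lambda>-1$ strictly.

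I expect the main obstacle to be the quantitative integrability argument ruling out a pole or essential singularity of $F$: one must show carefully that no cancellation or rapid oscillation of $\arg z$ can rescue the integral $\int_D |z|^{2\lambda}e^{2\Re F(z)}\,dxdy$ near $0$, which requires estimating $e^{2\Re F}$ from below on sets of controlled measure inside thin annuli (a Borel–Carathéodory / minimum-modulus type estimate for $\Re F$ on circles), and then summing over a dyadic family of annuli. Everything else—regularity of $I_\mu$, the Laurent expansion, the Gauss–Bonnet bookkeeping for the Dirac mass, and the final integrability computation for the sign of $\lambda$—is routine once this step is in place.
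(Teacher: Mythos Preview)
The paper does not prove this lemma; it is quoted from \cite{CLWnegative}. Your overall strategy---analyze the Laurent expansion of $F$, exclude negative powers via the finite-area hypothesis, then read off the constraint on $\lambda$---is the natural one and is essentially correct for the extension of $w$ and for part~(2). There is, however, a genuine gap in your derivation of $\lambda\ge -1$ in part~(1).

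You write that once $w$ is smooth, ``$e^{2u}$ is comparable near $0$ to $|z|^{2\lambda}$ times a positive bounded factor''. This is false: $I_\mu$ is \emph{not} bounded in general, only $e^{2|I_\mu|}\in L^q$ for some $q>1$. If the pointwise comparison held, your computation $\int_0 r^{2\lambda+1}\,dr<\infty$ would give $\lambda>-1$ strictly, yet the lemma asserts only $\lambda\ge -1$, and indeed the cusp case $\lambda=-1$ does occur (e.g.\ for hyperbolic cusps, where $I_\mu\to -\infty$ exactly compensates). Your sentence ``a priori one only gets $\lambda\ge -1$ after allowing equality in the borderline integrability'' is not an argument; it is a fudge to match the stated conclusion. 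A correct route is to pass to circular averages: by Jensen, $\int_0^1 r\,e^{2\bar u(r)}\,dr\le \int_D e^{2u}<\infty$, where $\bar u(r)=\bar I_\mu(r)+\lambda\log r+\bar w(r)$ and $\bar I_\mu(r)=-\tfrac{1}{2\pi}\int \max(\log r,\log|y|)\,d\mu(y)$. Since $\mu$ has no atom at $0$, for every $\epsilon>0$ one has $|\mu|(D_t)<\epsilon$ for $t<r_\epsilon$, whence $e^{2\bar I_\mu(r)}\ge C_\epsilon\,r^{\epsilon/\pi}$ on $(0,r_\epsilon)$; integrability then forces $\lambda>-1-\epsilon/(2\pi)$ for every $\epsilon>0$, i.e.\ $\lambda\ge -1$.

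For part~(2) your idea is right but the justification is misplaced. The monotonicity step and the claim $\K_g(\{0\})\le 0$ are neither needed nor supported by the hypothesis (which concerns only $D\setminus\{0\}$). What actually makes the argument work is that $\K_g\ge 0$ on $D\setminus\{0\}$ means $\mu\ge 0$, and for a nonnegative measure on $D$ one has the pointwise lower bound $I_\mu(z)\ge -\tfrac{\log 2}{2\pi}\,\mu(D)$ (since $|z-y|<2$). Then $e^{2u}\ge c\,|z|^{2\lambda}$ near $0$ genuinely holds, and finite area gives $\lambda>-1$ directly.
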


	\begin{defi}
		\label{def residue}\textnormal{
			(1)
			For $g=e^{2u}g_{\euc}\in\mathcal{M}(D)$, the {\it residue} of $g$ (or $u$) at $0$ is 
			$$
			\res(g,0)=\res(u,0)=-\frac{1}{2\pi}\K_g(\{0\}).
			$$
		}
		\textnormal{(2) For $g=e^{2u}g_{\euc}\in\mathcal{M}(D \backslash \{0\})$ satisfying
			\begin{align*}
			|\mathbb{K}_{g}|(D \backslash \{0\})+\int_{D} e^{2u}<\infty,
			\end{align*}
			the residue of $g$ (or $u$) at $0$ is 
			$$
			\res(g,0)=\res(u,0)=-\frac{1}{2\pi}\K_g(\{0\}).
			$$
		}
		\textnormal{ (3) 
			For  $g=e^{2u}g_{\euc}\in\mathcal{M}(\R^2\backslash D)$ satisfying
			\begin{align*}
			|\K_g|(\R^2\backslash D)+\int_{ \R^2\backslash D } e^{2u}<\infty,
			\end{align*}
			the residue of $g$ (or $u$) at $\infty$ is 			
			\begin{align*}
			\res(g,\infty)=\res(u,\infty)=-\frac{1}{2\pi}\K_{g'}(\{0\})
			\end{align*}
			where $g'=e^{2u'}g_{\euc}\in\mathcal{M}(D\backslash\{0\})$ with 
			$u'(x')=u(\frac{x'}{|x'|^2})-2\log|x'|, x'=\frac{x}{|x|^2}$
			is extended in $\mathcal{M}(D)$. 
		}
	\end{defi}

	\begin{pro}\cite{CLWnegative}
		\label{prop non-positive on neck} 
		Let $g_k=e^{2u_k}g_{\euc}\in\mathcal{M}(D)$ with
		\begin{align*}
		|\K_{g_k}|(D)+\Area(D,g_k)\leq\Lambda.
		\end{align*}
		Assume that $u_k$ converges to $u$ weakly in $W^{1,p}(D)$ for some $p\in[1,2)$
		and $\{u_k\}$ has a blowup sequence $\{(x_k,r_k)\}$ at $0$ with the corresponding bubble $u'$. Then there exists $t_i\rightarrow 0$, such that
		\begin{align*}
		\lim_{i\rightarrow\infty}\lim_{k\rightarrow\infty}\K_{g_k}(D_{t_i}(0)\backslash D_{ \frac{r_k}{t_i}}  (x_k))=-2 \pi \left(2+\Res(u,0)+\Res(u',\infty)\right).
		\end{align*}
	\end{pro}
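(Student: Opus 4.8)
The plan is to exploit the Gauss--Bonnet formula on the \emph{neck} region $D_{t}(0)\setminus D_{r_k/t}(x_k)$ and let $k\to\infty$, then $t\to 0$. Fix $t$ in the ``good set'' $\mathcal A\subset(0,1)$ of radii along which all the curvature measures give zero mass to $\partial D_t$ and along which $\int_{\partial D_t}\partial_r u_k \to \int_{\partial D_t}\partial_r u$ (this is exactly the set described after Lemma \ref{lemma gradient estimate on Riemann surfaces} in the commented Gauss--Bonnet discussion, which I will assume available). Writing $g_k = e^{2u_k}g_{\euc}$ and using \eqref{Gauss-Bonnet.disk} componentwise,
\begin{align*}
\K_{g_k}\bigl(D_{t}(0)\setminus D_{r_k/t}(x_k)\bigr)
= -\int_{\partial D_{t}(0)}\frac{\partial u_k}{\partial r}
 + \int_{\partial D_{r_k/t}(x_k)}\frac{\partial u_k}{\partial r}.
\end{align*}
The first boundary integral is handled by the weak $W^{1,p}$ convergence $u_k\to u$: along $t\in\mathcal A$ one has $\int_{\partial D_t}\partial_r u_k\to \int_{\partial D_t}\partial_r u$, and then letting $t\to 0$ (again through $\mathcal A$) the continuity of $\Phi_u$ and \eqref{Gauss-Bonnet.disk} give $-\int_{\partial D_t}\partial_r u\to \K_g(\{0\}) = -2\pi\,\Res(u,0)$ by Definition \ref{def residue}(1).

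For the inner circle $\partial D_{r_k/t}(x_k)$, rescale: put $u_k'(x) = u_k(x_k+r_kx)+\log r_k$, so $u_k'\to u'$ weakly in $\cap_{p\in[1,2)}W^{1,p}_{\loc}(\R^2)$, the bubble. A change of variables $x = x_k + r_k x'$ with $|x'| = 1/t$ turns $\int_{\partial D_{r_k/t}(x_k)}\partial_r u_k$ into $\int_{\partial D_{1/t}(0)}\partial_r u_k'$ (the $\log r_k$ shift and the Jacobian cancel exactly under the radial-derivative line integral). Now I need the behaviour of a bubble near infinity: $u'$ has finite curvature mass and finite area on $\R^2\setminus D$, so by Definition \ref{def residue}(3) and the computation recorded in the commented-out block right after it, $\lim_{t\to 0}\int_{\partial D_{1/t}(0)}\partial_r u' = -2\pi\bigl(2+\Res(u',\infty)\bigr)$. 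Passing $k\to\infty$ first (using that $1/t$ can be taken in the good set for $u_k'$, so $\int_{\partial D_{1/t}}\partial_r u_k'\to\int_{\partial D_{1/t}}\partial_r u'$) and then $t\to 0$ yields this same limit. Adding the two contributions and taking $t = t_i\to 0$ gives
\begin{align*}
\lim_{i\to\infty}\lim_{k\to\infty}\K_{g_k}\bigl(D_{t_i}(0)\setminus D_{r_k/t_i}(x_k)\bigr)
= -2\pi\bigl(2+\Res(u,0)+\Res(u',\infty)\bigr),
\end{align*}
as claimed.

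The main obstacle is the justification of interchanging the two limits on the inner circle --- i.e.\ that one may first let $k\to\infty$ at a \emph{fixed} inner radius $r_k/t_i$ and only afterwards send $t_i\to 0$, rather than having to track $r_k/t$ with $t$ depending on $k$. This is where one must use, in an essential way, that $\{(x_k,r_k)\}$ is a genuine blowup sequence (so the rescaled $u_k'$ converge, with no further concentration escaping to infinity at scale $1/t$ for each fixed $t$) together with a diagonal argument: choose $t_i\downarrow 0$ in the good sets of both $u$ and all the $u_k'$, control the error $|\int_{\partial D_{1/t_i}}\partial_r u_k' - \int_{\partial D_{1/t_i}}\partial_r u'|$ by the uniform curvature-mass bound $\Lambda$ via Lemma \ref{lemma gradient estimate on Riemann surfaces}, and then let $i\to\infty$. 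A secondary technical point is ensuring the chosen radii $t_i$ simultaneously avoid the (countable) set of ``bad'' radii for $\K_{g_k}$, $\K_g$, and for each rescaled problem; this is a routine countable-intersection argument but needs to be stated. Everything else --- the Gauss--Bonnet identity, the weak convergence of boundary flux integrals, and the residue computations at $0$ and at $\infty$ --- is quoted from the results and formulas assembled above.
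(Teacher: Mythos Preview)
The paper does not prove this proposition; it is quoted from \cite{CLWnegative} in the preliminary section, so there is no proof in the paper to compare against. Your outline is the natural Gauss--Bonnet approach and is essentially correct: split the curvature of the neck into two boundary flux integrals, pass the outer one to $u$ and the inner one (after rescaling) to the bubble $u'$, then invoke the residue identities at $0$ and at $\infty$ from Definition~\ref{def residue}.

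Two small corrections are worth flagging. First, the convergence $\int_{\partial D_{1/t}}\partial_r u_k'\to\int_{\partial D_{1/t}}\partial_r u'$ is not really controlled by the gradient estimate of Lemma~\ref{lemma gradient estimate on Riemann surfaces}; it comes directly from Gauss--Bonnet, since $\int_{\partial D_{1/t}}\partial_r u_k' = -\K_{g_k'}(D_{1/t})$, and the weak convergence $\K_{g_k'}\to\K_{g'}$ (which follows from $u_k'\to u'$ in $W^{1,p}_{\loc}$ together with the uniform total-variation bound inherited from $|\K_{g_k}|(D)\le\Lambda$) gives the claim for any $t$ with $\K_{g'}(\partial D_{1/t})=0$. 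So the ``error control'' you mention is not a gradient-estimate argument but a measure-convergence one, and the diagonal argument is unnecessary once the good radii are chosen. Second, the annulus $D_t(0)\setminus D_{r_k/t}(x_k)$ is not concentric, so the displayed Gauss--Bonnet identity needs the radial derivatives taken with respect to the respective centres ($0$ on the outer circle, $x_k$ on the inner); this is implicit in your rescaling step but should be stated explicitly, since it is exactly what makes the change of variables $x=x_k+r_kx'$ turn the inner flux into $\int_{\partial D_{1/t}}\partial_r u_k'$ cleanly. With these clarifications your argument goes through.
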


	If no bubble occurs then there is no area concentration point: 
	
	\begin{pro}\cite{CLWnegative}
		\label{pro no area concentration at zero on D}
		Let $g_{k}=e^{2u_{k}} g_{\euc} \in \mathcal{M}(D)$ which satisfies \textnormal{(P1)-(P3)}.
		Assume that $\{u_{k}\}$ has no bubble. Then 
		\begin{align}
		\label{identity.disk}
		\lim_{r \rightarrow 0} \lim_{k \rightarrow \infty} \Area( D_{r}, g_{k})=0. 
		\end{align}
	\end{pro}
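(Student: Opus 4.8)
The plan is to argue by contradiction. Suppose $\{u_{k}\}$ has no bubble but \eqref{identity.disk} fails, so after passing to a subsequence and extracting diagonally (so that every double limit below exists) we have $\delta_{0}:=\lim_{r\to0}\lim_{k\to\infty}\Area(D_{r},g_{k})>0$; since $r\mapsto\lim_{k}\Area(D_{r},g_{k})$ is nondecreasing, $\lim_{k}\Area(D_{r},g_{k})\ge\delta_{0}$ for every small $r>0$. I would first show that the curvature also concentrates at $0$. If $\lim_{r\to0}\lim_{k}|\K_{g_{k}}|(D_{r})=0$, then $0$ is necessarily a regular point (a conical singularity contributes the fixed atom $2\pi|\beta_{i}^{k}|\delta_{0}$ to $\K_{g_{k}}$), and for some small $\rho$ with $D_{2\rho}\subset D_{1/2}$ we have $|\K_{g_{k}}|(D_{2\rho})<\epsilon_{0}$ for all large $k$. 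Rescaling to $D_{2}$ and invoking the scaling of the gradient estimate (cf. Lemma \ref{lemma gradient estimate on Riemann surfaces}) together with (P1)-(P3), Theorem \ref{thm convergence of solutions of Radon measure when measure is small} applies on $D_{2}$: on $D_{\rho}$ either $u_{k}\to u$ weakly with $e^{2u_{k}}\to e^{2u}$ in $L^{q}$, so $\lim_{k}\Area(D_{r},g_{k})=\int_{D_{r}}e^{2u}\to0$ as $r\to0$, contradicting $\delta_{0}>0$; or $e^{2u_{k}}\to0$ in $L^{q}(D_{\rho})$, so $\lim_{k}\Area(D_{\rho},g_{k})=0<\delta_{0}$, again a contradiction. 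Hence the curvature concentrates at $0$ as well.

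The heart of the matter is to manufacture a genuine blowup sequence of $\{u_{k}\}$ at $0$, which contradicts the no-bubble hypothesis. If $0$ is a conical singularity I would first pass to the desingularized function $v_{k}:=u_{k}-\beta_{i}^{k}\log|z|$, so that $g_{k}=e^{2v_{k}}|z|^{2\beta_{i}^{k}}g_{\euc}$ has purely absolutely continuous curvature near $0$, of mass $\le\|K_{k}\|_{\infty}\,\Area(\cdot,g_{k})$ there; thus I may assume the curvature of the object to be rescaled is absolutely continuous and dominated by the area. Fix $\epsilon\in(0,\delta_{0})$ small, and a small $\sigma$ so that $\overline{D_{\sigma}}$ meets the (finite) set $\{x:\lim_{r\to0}\lim_{k}\Area(D_{r}(x),g_{k})\ge\epsilon\}$ only at $0$. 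A standard point-selection — the function $\Phi_{k}(r):=\sup_{x\in\overline{D_{\sigma}}}\Area(D_{r}(x),g_{k})$ is continuous, increasing, vanishes at $0$, and exceeds $\epsilon$ at $r=2\sigma$ for large $k$ — produces $x_{k}\in\overline{D_{\sigma}}$ and $r_{k}\to0$ with
\[
\Area(D_{r_{k}}(x_{k}),g_{k})=\epsilon=\sup_{x\in\overline{D_{\sigma}}}\Area(D_{r_{k}}(x),g_{k}),
\]
and $x_{k}\to0$ since any limit of $\{x_{k}\}$ is an area-concentration point lying in $\overline{D_{\sigma}}$.

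Put $u_{k}'(x)=u_{k}(x_{k}+r_{k}x)+\log r_{k}$ (and near a conical singularity $v_{k}'(x)=v_{k}(x_{k}+r_{k}x)+(1+\beta_{i}^{k})\log r_{k}$, with $u_{k}'=v_{k}'+\beta_{i}^{k}\log|\tfrac{x_{k}}{r_{k}}+x|$ so that convergence of $v_{k}'$ yields that of $u_{k}'$). Area invariance gives $\Area(D_{R},e^{2u_{k}'}g_{\euc})=\Area(D_{Rr_{k}}(x_{k}),g_{k})$, which is $\epsilon$ for $R=1$ and, by covering $D_{2r_{k}}(x_{k})$ by boundedly many radius-$r_{k}$ balls with centers in $\overline{D_{\sigma}}$, at most $C\epsilon$ for $R=2$; hence with $\epsilon$ small the absolutely continuous curvature of the rescaled object on $D_{2}$ is $<\epsilon_{0}$. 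The scaling of the gradient estimate bounds $\|\nabla u_{k}'\|_{L^{q}(D_{R})}$ for each fixed $R$ and $q\in[1,2)$, so $u_{k}'-c_{k}'$ ($c_{k}'$ the mean of $u_{k}'$ on $D_{1}$) subconverges weakly in $\cap_{p\in[1,2)}W^{1,p}_{\loc}(\R^{2})$. If $c_{k}'\to-\infty$, then by Theorem \ref{thm convergence of solutions of Radon measure when measure is small} on $D_{2}$ we get $e^{2(u_{k}'-c_{k}')}\to e^{2w}$ in $L^{q}(D_{1})$, whence $\Area(D_{1},e^{2u_{k}'}g_{\euc})=e^{2c_{k}'}\int_{D_{1}}e^{2(u_{k}'-c_{k}')}\to0$, contradicting that it equals $\epsilon$. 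Therefore $c_{k}'$ is bounded, $u_{k}'$ itself subconverges weakly in $\cap_{p\in[1,2)}W^{1,p}_{\loc}(\R^{2})$, and $\{(x_{k},r_{k})\}$ is a genuine blowup sequence of $\{u_{k}\}$ at $0$ — the desired contradiction. This proves \eqref{identity.disk}.

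I expect the delicate points to be the point-selection step — forcing $x_{k}\to0$ rather than to some other concentration point, which is precisely why one isolates $0$ inside a small $\overline{D_{\sigma}}$ among the finitely many area-concentration points — and the bookkeeping at a conical singularity, where one must rescale the desingularized $v_{k}$ rather than $u_{k}$ directly, since the atom $2\pi|\beta_{i}^{k}|\delta_{0}$ would otherwise violate the hypothesis of the small-curvature dichotomy, and then transfer the conclusion back to $u_{k}'$. Positivity of the curvature is used crucially here: it makes the non-atomic part of the curvature measure controlled by the area and hence small at the selected scale, which is exactly what lets Theorem \ref{thm convergence of solutions of Radon measure when measure is small} exclude collapse and so pin down a true (non-ghost) bubble.
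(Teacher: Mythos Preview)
The paper does not prove this proposition; it is quoted verbatim from \cite{CLWnegative}, so there is no in-paper argument to compare against.

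On its own merits your outline has a genuine gap at the decisive step. After point-selection on the \emph{area} you obtain $\Area(D_{2r_{k}}(x_{k}),g_{k})\le C\epsilon$ and then assert that ``with $\epsilon$ small the absolutely continuous curvature of the rescaled object on $D_{2}$ is $<\epsilon_{0}$''. That inference requires $|f_{k}|$ to be bounded \emph{above}, which is not part of (P1). Hypothesis (P1) only gives $|f_{k}|\ge 1$, hence
\[
|\K_{g_{k}}^{\mathrm{ac}}|(A)=\int_{A}|f_{k}|\,e^{2u_{k}}\ \ge\ \int_{A}e^{2u_{k}}=\Area(A,g_{k}),
\]
so the area is controlled by the curvature, not conversely; your closing sentence (``positivity \dots\ makes the non-atomic part of the curvature measure controlled by the area'') has the inequality in the wrong direction. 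Without an upper bound on $|f_{k}|$, a ball of arbitrarily small area can carry absolutely continuous curvature of order $\Lambda_{1}$, and Theorem~\ref{thm convergence of solutions of Radon measure when measure is small} is not available at your selected scale. Selecting on the curvature instead fixes that issue but forfeits the lower bound on the rescaled area, so the ghost alternative can no longer be excluded by your $L^{q}$ argument.

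In every application in this paper one has in addition $K_{k}\to K$ in $C^{0}$, hence $|f_{k}|\le\Lambda_{3}$ as in Proposition~\ref{prop convergence.with singular point}\,(3); under that extra two-sided bound your argument goes through essentially as written. But under (P1)--(P3) alone the step fails, and one must either import the missing upper bound or use the mechanism from \cite{CLWnegative} (which the present paper does not reproduce) to pass from area concentration to a genuine bubble.
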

	
	If the curvature does not change sign then convergence of solutions occurs: 
	
	\begin{pro}\cite{CLWnegative}
		\label{prop convergence.with singular point}
		Let $g_k=e^{2u_k}g_{\euc}\in\mathcal{M}(D)$ with $\K_{g_k}=f_ke^{2u_k}dx-2\pi\beta_k\delta_0$. Assume 
		\begin{itemize}
			\item[(1)] $\Area(D,g_k)\leq\Lambda_1$;
			
			\item[(2)] \( r \|\nabla u_{k}\|_{L^{1}(D_r(x))} \leq \Lambda_{2} \) for all \( D_r(x) \subset D  \);
			
			\item[(3)] $-\Lambda_3 \leq f_k\leq -1$ or $1\leq f_k\leq \Lambda_3$,  and $f_k$ converges to $f$ for a.e. $x\in D$;
			
			\item[(4)] $\beta_k\rightarrow\beta$.
		\end{itemize}
		Assume $\{ u_k \}$ has no bubble. Then, after passing to a subsequence, one of the following holds:
		\begin{itemize}
			\item[(a)] $u_k\to u$ weakly in $\cap_{p \in [1,2)} W^{1,p}(D_{1/2})$
			and $e^{2u_k}\to e^{2u}$ in $L^1(D_{1/2})$, and 
			$$
			-\Delta u=fe^{2u}-2\pi \beta\delta_0;
			$$
			\item[(b)] $u_k\to -\infty$ a.e. and $e^{2u_k}\to 0$ in $L^1(D_{1/2})$.
		\end{itemize}
	\end{pro}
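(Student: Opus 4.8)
\smallskip
\noindent\textbf{Sketch of a proof.}
The plan is to reduce everything to the small-mass local theory recalled above (Theorem \ref{thm convergence of solutions of Radon measure when measure is small}): the no-bubble hypothesis, via Proposition \ref{pro no area concentration at zero on D}, prevents any concentration of $\K_{g_k}$ away from the prescribed atom at $0$, so on small disks the curvature measures have mass below $\epsilon_0$ and the local dichotomy applies; the global alternative is then governed by the behaviour of the averages of $u_k$, and the remaining work is to control the point $0$ and to pass to the limit in the equation.

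First I would observe that (1), (3), (4) give a uniform bound $|\K_{g_k}|(D)\le\Lambda_3\Lambda_1+2\pi\sup_k|\beta_k|=:\Lambda$, so that, together with (1) and (2), the standing hypotheses (P1)--(P3) hold for $g_k$ on $D$ and, after translation and rescaling, on every interior disk; in particular Proposition \ref{pro no area concentration at zero on D} is available. Let $c_k$ be the average of $u_k$ on $D_{1/2}$. From $-\Delta u_k=\K_{g_k}$ and $|\K_{g_k}|(D)\le\Lambda$, standard elliptic estimates for measure data together with the Poincar\'e and Sobolev inequalities show that $\{u_k-c_k\}$ is bounded in $\cap_{p\in[1,2)}W^{1,p}(D_{1/2})$; passing to a subsequence, $u_k-c_k\rightharpoonup w$ weakly there, hence strongly in $L^p$ and a.e. by compact embedding. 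Jensen's inequality applied to (1) bounds $c_k$ from above, so after a further subsequence either $c_k\to c_\infty\in\R$, which will yield (a), or $c_k\to-\infty$, which will yield (b).

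Next I would exploit the no-bubble assumption to rule out concentration. Proposition \ref{pro no area concentration at zero on D} applied at $0$ gives $\lim_{r\to0}\lim_{k\to\infty}\int_{D_r(0)}e^{2u_k}=0$. At an interior point $x_0\ne0$ the measure $\K_{g_k}$ carries no atom near $x_0$, so the same proposition on a small disk about $x_0$ gives $\lim_{\rho\to0}\lim_{k\to\infty}\int_{D_\rho(x_0)}e^{2u_k}=0$; and since $1\le|f_k|\le\Lambda_3$ this is equivalent to $\lim_{\rho\to0}\lim_{k\to\infty}|\K_{g_k}|(D_\rho(x_0))=0$. Hence for every $x_0\in D_{1/2}\setminus\{0\}$ there is $\rho>0$ with $|\K_{g_k}|(D_\rho(x_0))<\epsilon_0<\pi$ for all large $k$, so Theorem \ref{thm convergence of solutions of Radon measure when measure is small} applies on $D_\rho(x_0)$. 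If $c_k\to c_\infty$, then $u_k$ is bounded in $W^{1,p}(D_{1/2})$ and converges a.e. to $u:=w+c_\infty$, so alternative (2) of that theorem is excluded and alternative (1) holds: $u_k\rightharpoonup u$ weakly in $W^{1,p}(D_{\rho/2}(x_0))$ and $e^{2u_k}\to e^{2u}$ in $L^q(D_{\rho/2}(x_0))$ for some $q>1$. If $c_k\to-\infty$, then $u_k\to-\infty$ a.e., so alternative (2) holds and $e^{2u_k}\to0$ in $L^q(D_{\rho/2}(x_0))$. A covering of $D_{1/2}\setminus\{0\}$ then transfers these conclusions to all of $D_{1/2}\setminus\{0\}$.

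Finally I would handle $0$ and pass to the limit. If $c_k\to-\infty$, the splitting $\int_{D_{1/2}}e^{2u_k}\le\int_{D_{1/2}\setminus D_r(0)}e^{2u_k}+\int_{D_r(0)}e^{2u_k}$, letting $k\to\infty$ first and then $r\to0$, gives $e^{2u_k}\to0$ in $L^1(D_{1/2})$, while $u_k=(u_k-c_k)+c_k\to-\infty$ a.e.; this is (b). If $c_k\to c_\infty$, then $u_k\rightharpoonup u$ weakly in $\cap_{p\in[1,2)}W^{1,p}(D_{1/2})$ by the global bound of the second step, $e^{2u}\in L^1(D_{1/2})$ by Fatou and the $L^1_{\loc}$ convergence away from $0$, and the same splitting, comparing $e^{2u_k}$ with $e^{2u}$, upgrades the convergence to $e^{2u_k}\to e^{2u}$ in $L^1(D_{1/2})$; passing to the limit in $-\Delta u_k=f_ke^{2u_k}-2\pi\beta_k\delta_0$ (weak $W^{1,p}$ convergence on the left, (4) for the atom, and Vitali's theorem together with the equi-integrability of $\{e^{2u_k}\}$ and $f_k\to f$ a.e. with $|f_k|\le\Lambda_3$ for the nonlinear term) gives $-\Delta u=fe^{2u}-2\pi\beta\delta_0$, which is (a). The step I expect to be the main obstacle is controlling the singular point $0$: one must show that no \emph{extra} curvature (equivalently area) concentrates at $0$ beyond the atom $-2\pi\beta_k\delta_0$, and this is exactly where the no-bubble hypothesis is indispensable, through Proposition \ref{pro no area concentration at zero on D}; a secondary subtlety is making sure one always lands in alternative (1) rather than (2) of Theorem \ref{thm convergence of solutions of Radon measure when measure is small} on the disks away from $0$, which is dictated by the dichotomy for the $c_k$ together with the global $W^{1,p}$ bound on $u_k-c_k$.
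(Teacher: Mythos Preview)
The paper does not supply its own proof of this proposition; it is quoted from \cite{CLWnegative} in the preliminary section. Your sketch is correct and assembles exactly the ingredients the paper has collected for this purpose---Proposition \ref{pro no area concentration at zero on D} to exclude area (hence curvature) concentration under the no-bubble hypothesis, then Theorem \ref{thm convergence of solutions of Radon measure when measure is small} on small disks, with the global alternative governed by the averages $c_k$---so there is nothing in the present paper to compare against.
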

	
	The following {\it area identity} plays a key role in this paper: 
	
	\begin{theorem}\cite{CLWnegative}
		\label{theorem area convergence of bubble tree}
		Let $g_{k} =e^{2u_{k}} g_{\euc} \in \mathcal{M}(D)$ satisfy 
		\begin{itemize}
			\item[(P1)] \( \K_{g_k} = f_k dV_{g_k} + \lambda_k \delta_0 \) (where \( \lambda_k \) might be 0) with \( f_k \geq 1 \) or \( f_k \leq -1 \);
			\item[(P2)] \( |\K_{g_k}|(D) + \Area(D,g_k) \leq  \Lambda_{1} \);
			\item[(P3)] \( r^{-1} \|\nabla u_{k}\|_{L^{1}(D_r(x))} \leq \Lambda_{2} \) for all \( D_r(x) \subset D \).
		\end{itemize}
		We assume $\{u_{k}\}$ has finitely many bubbles $v_{1}, \cdots, v_{m}$, induced by blowup sequences $\{(x_{k}^{1},r_{k}^{1}) \}$, $\cdots, \{ (x_{k}^{m},r_{k}^{m})\}$ at the point $0$, respectively. and we further assume that $\{u_{k}\}$ has no bubbles at any point $x \in D \backslash \{0\}$. Then after passing to a subsequence, 
		\begin{align}
		\label{bubble tree area convergence}
		\lim_{k \rightarrow \infty} \Area( D_{{1}/{2}},g_{k})=\Area(D_{{1}/{2}},g_{\infty})+\sum_{i=1}^m  \Area(\mathbb{R}^{2},e^{2v_{i}}g_{\euc}).
		\end{align}
		We allow $m=0$, which means $\{u_{k}\}$ has no bubbles at $0$, and the sum term in 	(\ref{bubble tree area convergence}) vanishes for this case. 
	\end{theorem}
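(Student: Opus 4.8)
The idea is to partition $D_{1/2}$ into a base region, a bubble region around each blowup sequence, and the necks joining them, to compute the area of each piece, and to induct on the number of bubbles $m$. Order the bubbles by the decay rates of the scales $r_k^i\to0$; by the separation of domains in Remark~\ref{remark different bubble relationship}, for every fixed $R$ and all large $k$ the disks $D_{Rr_k^i}(x_k^i)$ at the same level are pairwise disjoint, while those on top of one another are nested. For $m=0$ the statement follows from Proposition~\ref{pro no area concentration at zero on D} and Theorem~\ref{thm convergence of solutions of Radon measure when measure is small}: having no bubble, $\{u_k\}$ has no area-concentration point, so by Theorem~\ref{thm convergence of solutions of Radon measure when measure is small} either $e^{2u_k}\to e^{2u_\infty}$ in $L^q_{\loc}$ or $e^{2u_k}\to0$, and since $g_\infty$ has finite area this gives $\Area(D_{1/2},g_k)\to\Area(D_{1/2},g_\infty)$.

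For $m\ge1$ take the coarsest ($1$-level) blowup sequences $\{(x_k^j,r_k^j)\}$, $j$ ranging over a finite index set, and rescale $u_k^j(x)=u_k(x_k^j+r_k^jx)+\log r_k^j$ on the exhausting disks $D_{1/(2r_k^j)}$. Area and total curvature are conformally invariant and the bound (P3) is scale-invariant, so $\{u_k^j\}$ again satisfies (P1)--(P3) with the same constants (the possible atom $\delta_0$ is merely relocated or pushed to infinity, which does not affect (P1)); its weak limit is $v_j$ and its bubbles are precisely the blowup sequences lying over $\{(x_k^j,r_k^j)\}$, which are strictly fewer than $m$ in total. Applying the induction hypothesis to each $\{u_k^j\}$ --- on small disks around each of its bubble points, using the separation of domains to reduce to the single-point case --- yields
$$
\lim_{R\to\infty}\lim_{k\to\infty}\Area\bigl(D_{Rr_k^j}(x_k^j),g_k\bigr)=\sum_{i\in T(j)}\Area\bigl(\R^2,e^{2v_i}g_{\euc}\bigr),
$$
where $T(j)$ is the subtree of bubbles sitting over $v_j$ (including $v_j$ itself); these areas are finite by (P2) and Fatou's lemma, and the singular bubbles lie in $\mathcal M(\R^2)$ by Lemma~\ref{lemma removable singularity}. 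On the base region $D_{1/2}\setminus D_\delta(0)$ there is no bubble, hence (again by Proposition~\ref{pro no area concentration at zero on D}, applied at each point, and Theorem~\ref{thm convergence of solutions of Radon measure when measure is small}) $\Area(D_{1/2}\setminus D_\delta(0),g_k)\to\Area(D_{1/2}\setminus D_\delta(0),g_\infty)\to\Area(D_{1/2},g_\infty)$ as $\delta\to0$. Since the subtrees $T(j)$ partition the full set of bubbles, adding these two contributions reduces the theorem to a single assertion: the necks carry no area in the limit, i.e.
$$
\lim_{\delta\to0}\ \lim_{R\to\infty}\ \lim_{k\to\infty}\Area\Bigl(D_\delta(0)\setminus\bigcup_j D_{Rr_k^j}(x_k^j),\,g_k\Bigr)=0.
$$

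This neck estimate is the main obstacle, and I would prove it by excluding area concentration at every intermediate scale. Were the displayed quantity bounded below, there would be a scale $\rho_k$, finer than each $\delta$ and coarser than every $Rr_k^j$, along which area concentrates; rescaling at $(y_k,\rho_k)$ and passing to a subsequence, the rescaled sequence either admits a blowup sequence --- producing a bubble essentially different from $v_1,\dots,v_m$, which contradicts the maximality clause in the hypotheses --- or it converges to a ghost bubble, and then the area-vanishing property of ghost bubbles recorded after Definition~\ref{def blowup sequence and bubble} confines the retained area to the finite set $\mathcal S$ of curvature-concentration points, each carrying $\K_{g_k}$-mass at least $\epsilon_0/2$ (pulled back to the original scale); recursing at these points, the process terminates because the total curvature is $\le\Lambda_1$ and there are only $m$ bubbles altogether. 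The one-sided bound $f_k\ge1$ or $f_k\le-1$ in (P1) is indispensable here and throughout: it yields $\Area(\Omega,g_k)\le|\K_{g_k}|(\Omega)$ on any piece $\Omega$ avoiding the atom --- so neck area is controlled once neck curvature is, the latter being pinned down by Proposition~\ref{prop non-positive on neck} once the bubbles are peeled off --- and, via Theorem~\ref{thm convergence of solutions of Radon measure when measure is small}, it supplies the $L^q$-convergence of $e^{2u_k}$ needed to pass all these areas to the limit. Assembling the base area, the bubble areas, and the vanishing neck area, and letting $k\to\infty$, then $R\to\infty$, then $\delta\to0$, yields \eqref{bubble tree area convergence}.
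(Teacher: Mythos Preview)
The paper does not supply its own proof of this statement: Theorem~\ref{theorem area convergence of bubble tree} is quoted in the preliminaries from the companion work \cite{CLWnegative}, so there is no in-paper argument to compare against.

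That said, your plan --- base/bubble/neck decomposition, induction on the number of bubbles, rescaling at the $1$-level blowup sequences, and handling the neck via a ghost-bubble recursion using the area-vanishing property recorded after Definition~\ref{def blowup sequence and bubble} --- is the standard architecture for such identities and is correct in outline. Two points deserve tightening. First, the termination of your neck recursion (``the process terminates because the total curvature is $\le\Lambda_1$ and there are only $m$ bubbles'') is loose: the clean reason is that any ghost bubble retaining positive area must have at least two concentration points in its set $\mathcal S$ (a single one could be bypassed), so the tree of ghost bubbles has at most $m$ leaves (the genuine bubbles $v_1,\dots,v_m$) and hence at most $m-1$ internal ghost nodes; the recursion is therefore finite. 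Second, your secondary remark that Proposition~\ref{prop non-positive on neck} ``pins down'' the neck curvature and hence, via $\Area\le|\K|$, the neck area, is not quite right as stated: that proposition computes $\lim\K_{g_k}$ over an annular neck, and the value $-2\pi(2+\res(u,0)+\res(u',\infty))$ is typically nonzero, so it does not by itself force the neck area to vanish. The vanishing genuinely comes from your primary ghost-bubble argument; the inequality $\Area(\Omega,g_k)\le|\K_{g_k}|(\Omega)$ from $|f_k|\ge1$ is what makes the ghost-bubble area-vanishing property hold in the first place (and is presumably how \cite{CLWnegative} proves it), rather than something to be combined with Proposition~\ref{prop non-positive on neck} after the fact.
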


	\section{Positive curvatures on $\mathbb S^2$ with divisors}
	
	In this section, we investigate the bubble tree convergence of a sequence of conformal metrics with conical singularities on a sphere. 
	
	\subsection{Bubbles at 1-level}
	
	In this subsection, we establish a result concerning a property of bubbles at 1-level on a disk, 
	which will play a crucial role in the subsequent discussions. 
	\begin{lem}
		\label{lemma one.level} 
		Let $g_{k}=e^{2u_{k}} g_{\euc }\in \mathcal{M}(D)$ and let $u_k\in \bigcap_{p\in[1,2)}W^{1,p}(D)$ solve 
		$$
		-\Delta u_k = K_k(x)e^{2u_{k}} - 2\pi\lambda_k\delta_{y_k},
		$$
		under the following assumptions:
		\begin{itemize}
			\item[(a)] There exists $\Lambda_{1} >0$ such that $\Area(D,g_{k}) \leq \Lambda_{1}$;
			\item[(b)] There exists $\Lambda_{2} >0$ such that $ r^{-1} \|\nabla u_{k}\|_{L^{1}(D_r(x))} \leq \Lambda_{2} \text{ for all }  D_r(x) \subset D  $;
			\item[(c)] $\{u_k\}$ converges to $u$ weakly in $\bigcap_{p\in[1,2)} W^{1,p}(D)$;
			\item[(d)] $K_k$ converges to $K$ in   $C(\overline{D})$, where $K>0$;
			\item[(e)] The sequences $\{y_k\}$ and $\{\lambda_k\}$ converge to $y$ and $\lambda \in(-1,\infty)$, respectively.
		\end{itemize}
		Then, we have:
		\begin{itemize}
			\item[(1)] $\{u_k\}$ converges weakly in $W^{2,p}_{\text{loc}}(D\backslash\{y\})$ for any $p>1$; 
			
			\item[(2)] If  $\{ u_k \}$ has a blowup sequence $\{ (x_k,r_k) \} $, then $x_{k} \rightarrow y$, $y_k\in D_t(y)\backslash D_{\frac{r_k}{t}}(x_k)$ for any fixed $t>0$ and large $k$, and the corresponding bubble is smooth;
			
			\item[(3)] If $\lambda \leq 1$, then $\{ u_k \}$ has no bubble in $D$.
		\end{itemize}
	\end{lem}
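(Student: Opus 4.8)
The plan is to exploit the fact that the curvature is bounded below by a positive constant (say $K_k \geq c_0 > 0$ on $D$ for large $k$, by assumption (d)), so that every bubble one can produce by blowing up must be a complete metric of strictly positive curvature on $\mathbb{R}^2$ with finite area; such metrics are highly constrained. The starting point for part (1) is a standard elliptic bootstrap: away from $y_k$ (which converges to $y$) the equation $-\Delta u_k = K_k e^{2u_k}$ has no singular source, and by Theorem \ref{thm convergence of solutions of Radon measure when measure is small} applied on small disks — after checking the hypothesis $|\K_{g_k}|(D_\rho) < \epsilon_0$ for $\rho$ small, which follows from (a), (b) together with the Gauss--Bonnet relation — we get that on $D\setminus\{y\}$ the sequence $u_k$ has locally bounded $L^q$ norm of $e^{2u_k}$, so the right-hand side is bounded in $L^q_{\loc}$, and then Calderón--Zygmund estimates upgrade the weak $W^{1,p}$ convergence from (c) to weak $W^{2,p}_{\loc}(D\setminus\{y\})$ convergence for any $p>1$. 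The one subtlety is ruling out the "collapse" alternative (2) in Theorem \ref{thm convergence of solutions of Radon measure when measure is small} locally away from $y$: since $u_k \to u$ weakly in $W^{1,p}(D)$ with $u \in L^1$ finite, the mean values cannot go to $-\infty$ on a set of positive measure, so alternative (1) must hold.

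For part (2), suppose $\{(x_k, r_k)\}$ is a blowup sequence. First I would argue $x_k \to y$: if not, then $\dist(x_k, y) \geq \delta > 0$ along a subsequence, so the blowup happens in a region where, by part (1), $u_k$ converges strongly in $W^{2,p}_{\loc}$ hence in $C^{1,\alpha}_{\loc}$; but $C^{1,\alpha}$ convergence precludes the concentration of area needed to form a bubble (the rescaled metrics $u_k(x_k + r_k x) + \log r_k$ would have area on any fixed $D_R$ equal to $\Area(D_{Rr_k}(x_k), g_k) \to 0$ since $e^{2u}$ is bounded near $x_k$ and $r_k \to 0$), contradicting that the weak limit $u$ on $\mathbb{R}^2$ is a genuine bubble (which must carry positive area — indeed curvature $\geq c_0$ forces $\Area(\mathbb{R}^2, e^{2u}g_{\euc}) < \infty$ and a Gauss--Bonnet argument shows it is bounded below away from $0$). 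Once $x_k \to y$, the claim $y_k \in D_t(y)\setminus D_{r_k/t}(x_k)$ for fixed $t$ and large $k$ means the singular point sits in the "neck" region between the bubble scale and the body scale: it cannot be at the bubble scale because, as $k\to\infty$, if $\frac{|y_k - x_k|}{r_k}$ stayed bounded, the rescaled equation $-\Delta u_k' = K_k(x_k + r_k x)e^{2u_k'} - 2\pi\lambda_k \delta_{(y_k-x_k)/r_k}$ would pass to a limit bubble carrying a Dirac mass at a finite point, i.e. a \emph{singular} bubble, and one must exclude that — this is where the hypothesis that we are looking at $1$-level bubbles matters; and it cannot be at the body scale because then $|y_k - x_k|/r_k \to \infty$ already places $y_k$ outside $D_{r_k/t}(x_k)$, while $y_k \to y$ puts it in $D_t(y)$. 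So the content is: the limit bubble is smooth, which I would establish via Lemma \ref{lemma removable singularity}(2) — the bubble $u'$ solves $-\Delta u' = K(y)e^{2u'}$ on $\mathbb{R}^2$ (or on $\mathbb{R}^2 \setminus\{0\}$ a priori) with finite area and nonnegative curvature, so its residue $\lambda > -1$, and I claim for a $1$-level bubble it is in fact smooth because a nontrivial residue would correspond to a conical point which, under the blowup normalization chosen here, would have to come from $y_k$ approaching at the bubble scale, already excluded.

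For part (3), assume $\lambda \leq 1$ and suppose for contradiction that $\{u_k\}$ has a bubble. By part (2) the bubble is smooth, $x_k \to y$, and $y_k$ lies in the neck. Now I would apply the neck-analysis Proposition \ref{prop non-positive on neck}: there exist $t_i \to 0$ with
\begin{align*}
\lim_{i\to\infty}\lim_{k\to\infty} \K_{g_k}\bigl(D_{t_i}(y)\setminus D_{r_k/t_i}(x_k)\bigr) = -2\pi\bigl(2 + \Res(u, y) + \Res(u', \infty)\bigr).
\end{align*}
Since $y_k \to y$ sits inside these annular regions for large $k$, the curvature measure there includes the atom $-2\pi\lambda_k \delta_{y_k}$ plus the (nonnegative) absolutely continuous part $K_k e^{2u_k}\,dx$; a lower bound on the left side is therefore $\geq -2\pi\lambda + o(1)$ after accounting for the sign, which forces an inequality relating $\lambda$, $\Res(u,y)$, and $\Res(u',\infty)$. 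The key points are: $\Res(u',\infty) \leq -2$ is impossible for a smooth finite-area positively curved bubble (its total curvature $\int_{\mathbb{R}^2} K(y) e^{2u'} = -2\pi(2 + \Res(u',\infty))$ must be positive, so $\Res(u',\infty) < -2$, giving positive total curvature $> 0$ — wait, one needs $\Res(u',\infty) < -2$, and in fact for a \emph{smooth} bubble $\Res(u',\infty) = -2 - \frac{1}{2\pi}\int K e^{2u'}$, and since there's at least one bubble's worth of area, by the area identity Theorem \ref{theorem area convergence of bubble tree} this integral is a definite positive amount); and $\Res(u, y) \geq \lambda - 2\lfloor\cdot\rfloor$ type constraints coming from Lemma \ref{lemma removable singularity} and the fact that $u$ itself, being a weak limit, represents a divisor at $y$ whose coefficient is $\lambda$ shifted by twice the number of "absorbed" bubbles. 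Combining the sign of the neck curvature (which must be $\leq 0$ by Proposition \ref{prop non-positive on neck}, since it equals $-2\pi(2 + \Res(u,y) + \Res(u',\infty))$ and the annuli carry curvature bounded below only by the atom $-2\pi\lambda_k$) with the constraint $\lambda \leq 1$ yields a numerical contradiction: roughly, one bubble requires at least $2\pi \cdot (\text{something} \geq 1)$ of curvature concentration, but the available "budget" $2 + \lambda \leq 3$ combined with the singular contribution cannot accommodate it. \textbf{The main obstacle} I anticipate is making the last bookkeeping step precise — tracking exactly how the atom at $y_k$ splits between the bubble, the neck, and the body, and converting the neck-curvature formula into the sharp inequality that $\lambda > 1$ is necessary for a bubble; this requires careful use of the Gauss--Bonnet relations on annuli (the countable exceptional-radii argument) together with Proposition \ref{prop non-positive on neck} applied in the correct nested order.
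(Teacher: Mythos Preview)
Your strategy matches the paper's --- Proposition \ref{prop non-positive on neck} for the neck curvature and Lemma \ref{lemma removable singularity} for residue constraints --- but the crucial missing ingredient is the Chen--Li classification \cite{CW-L}: any finite-area solution of $-\Delta u' = ce^{2u'}$ on $\mathbb R^2$ with $c>0$ is a round sphere, so in particular $\Res(u',\infty)=0$. This fact drives all three parts, and you never invoke it. In part (1) you assume the small-measure hypothesis of Theorem \ref{thm convergence of solutions of Radon measure when measure is small} holds on every disk away from $y$, but excluding other concentration points $\tilde y\neq y$ is exactly the content of the step; the paper does it by blowing up at a hypothetical $\tilde y$ (via maximum points) to produce a smooth bubble with $\Res(\tilde u,\infty)=0$, then using Proposition \ref{prop non-positive on neck} to get neck curvature $\le -2\pi(2+\Res(u,\tilde y)+0)<0$, contradicting $K_k>0$ on the neck. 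In part (2) your exclusion of $y_k$ from the bubble scale is circular (you appeal to a nonexistent ``$1$-level hypothesis''); the paper argues directly: if $y_k\notin D_r(y)\setminus D_{r_k/r}(x_k)$, the neck carries only $K_ke^{2u_k}dx\ge 0$, but Proposition \ref{prop non-positive on neck} together with both residues being $>-1$ (Lemma \ref{lemma removable singularity}(2)) forces the neck curvature to be strictly negative --- contradiction. This places $y_k$ in the neck, so the rescaled equation has no Dirac on any bounded set and the bubble is smooth.

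In part (3) your residue formula has the wrong sign: Gauss--Bonnet on the extended $\mathbb S^2$ reads $\int_{\mathbb R^2} K(y)e^{2u'} = 4\pi + 2\pi\,\Res(u',\infty)$, not $-2\pi(2+\Res(u',\infty))$, and for the smooth bubble $\Res(u',\infty)=0$, not ``$<-2$'' as you write. Once $\Res(u',\infty)=0$ is in hand, the paper's endgame is one line: since $y_k$ lies in the neck, the neck curvature is $\ge -2\pi\lambda_k\to -2\pi\lambda$, so
\[
-2\pi\lambda \;\le\; -2\pi\bigl(2+\Res(u,y)+0\bigr) \;<\; -2\pi,
\]
giving $\lambda>1$. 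No tracking of ``absorbed bubbles'' inside $\Res(u,y)$ is needed; the bare inequality $\Res(u,y)>-1$ suffices.
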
 
	
	\begin{proof}
		\noindent	\textbf{Step 1:} We show that $u_k$ converges weakly in $W^{2,p}_{\text{loc}}(D\backslash\{y\})$ for any $p>1$. 
		
		We assume that $|\K_{g_k}|$ converges weakly to a Radon measure $\mu$  and define 
		$$
		\S=\left\{x:\mu(\{x\})>\frac{\epsilon_0}{2}\right\},
		$$ 
		where $\epsilon_{0}$ is chosen as in Theorem \ref{thm convergence of solutions of Radon measure when measure is small}.
		
		By Theorem \ref{thm convergence of solutions of Radon measure when measure is small},
		$e^{2u_k}$ is bounded in $L^{q_1}(\Omega)$ for some $q_1>1$ and any $\Omega\subset\subset D\backslash\S$. By the $L^p$-estimate \cite[Theorem 9.11]{gilbarg1977elliptic}, $u_k\to u$ weakly in $W^{2,p}_{\text{loc}}(D\backslash\S)$ for any $p \geq 1 $. Thus, to see $u_k\to u$ in $W^{2,p}_{\text{loc}}(D\backslash\{y\})$, it suffices to show $\S\backslash\{y\}=\emptyset$.
		
		We assume that there exists $\tilde{y} \in\S\backslash\{y\}$, and select $\delta$ such that $D_{2\delta}(\tilde{y})\cap(\S\cup\{y\})=\{\tilde{y}\}$. For large $k$, the equation satisfied by $u_k$ on $D_{2\delta}(\tilde{y} )$ is 
		\begin{equation*}
		\label{equation.uk' one level}
		-\Delta u_k=K_ke^{2u_k}.
		\end{equation*}
		If $u_k$ is bounded from above in $D_{2\delta}(\tilde{y} )$, then $u_k$ is bounded in $W^{2,p}(\overline{D_{\delta}(\tilde{y} )})$ for any $p$. This implies that $K_ke^{2u_k}$ converges in $C((\overline{D_{\delta}(\tilde{y} )})$, which contradicts the choice that $\tilde{y}$ is in $\S$. Thus, we must have $\max_{\overline{D_\delta(\tilde{y} ) } } u_k\rightarrow +\infty$. Let
		$$
		c_k=\max_{\overline{D_\delta(\tilde{y} )} }u_k=u_k(z_k),\s \rho_k=e^{-c_k},
		$$
		and define
		$$
		\tilde{u}_k(x)=u_k(z_k+\rho_kx)+\log \rho_k.
		$$
		
		Recalling that $u_k\to u$ weakly in $W^{2,p}_{\text{loc}}(D_{2\delta}\backslash\{ \tilde{y}  \})$ for any $p$, we conclude $z_k\rightarrow \tilde{y} $ (otherwise we will obtain a contradiction to the choice of $z_{k}$). 
		Since $\tilde{u}_k(0)=\max_{D_R} \tilde{u}_k$, applying \cite[Theorem 0]{li-Shafrir1994} and \cite[Theorem 9.11]{gilbarg1977elliptic}, we conclude $\|\tilde{u}_k\|_{W^{2,p}(D_R)}<C(R)$ for any $R$. Then, $\tilde{u}_k$ converges weakly in $W_{loc}^{2,p}(\mathbb{R}^{2})$ to a function $\tilde{u}$ which satisfies 
		$$
		-\Delta \tilde{u}=K(\tilde{y})e^{2\tilde{u}},\s \tilde{u}(0)=\max_{\mathbb{R}^{2}} \tilde{u}=0,\s \int_{\R^2}e^{2\tilde{u}}<\infty.
		$$ 
		By \cite[Theorem 1]{CW-L}, we have
		$$
		\tilde{u}(x)=-\log\left(1+\frac{K(\tilde{y} )}{4}|x|^2\right),\s \textnormal{and}\s \int_{\R^2}K(\tilde{y} )e^{2\tilde{u}}=4\pi.
		$$
		Thus, $e^{2\tilde{u}}g_{{\euc}}$ is the metric with constant curvature $K(\tilde{y} )$ on $\mathbb{S}^2\backslash\{\infty\}$. So $\infty$ is not a singularity hence 
		$\res(\tilde{u},\infty)=0$. By Proposition \ref{prop non-positive on neck}, we obtain
		$$
		\liminf_{r\rightarrow 0}\lim_{k\rightarrow\infty}
		\K_{g_k}(D_r(\tilde{y} )\backslash D_{\frac{r_k}{r}}(z_k)) \leq -2 \pi \left(2+\Res(u,\tilde{y})+\Res(\tilde{u},\infty)\right)<0.
		$$
		However, since $\K_{g_k}=K_ke^{2 u_k}dx$ on $D_r(\tilde{y} )\backslash D_{\frac{r_k}{r}}(z_k)$ and $K_k>0$, we arrive at a contradiction.		
		
		\vspace{.1cm}
		\noindent	\textbf{Step 2:} 
		Assume that $\{u_{k}\}$ has a  blowup sequence $\{(x_{k},r_{k})\}$ at some point $x_{0} \in D$. We claim that for any fixed $r>0$, $y_{k}$ lies in the neck region $D_{r}(x_{0}) \backslash D_{ \frac{r_{k}}{r} }(x_{k})$. 
		
		Suppose $u_k'=u_k(x_k+r_kx)+\log r_k\to u'$ weakly in $\cap_{p \in [1,2)} W_{\loc}^{1,p}(\mathbb{R}^{n})$. If $y_k\notin D_r(x_{0})\backslash D_{\frac{r_k}{r}}(x_k)$,
		then 
		$u_k$ has no singularity in $D_r(x_{0})\backslash D_{\frac{r_k}{r}}(x_k)$. By Proposition \ref{prop non-positive on neck},
		\begin{align*}
		0\leq\liminf_{r\rightarrow 0}\liminf_{k\rightarrow\infty}\K_{g_k}(D_r(x_{0})\backslash D_{\frac{r_k}{r}}(x_k))\leq-2\pi(2+\res(u,x_{0})+\res(u',\infty)),
		\end{align*}
		which  leads to a contradiction since $\res(u,x_{0})>-1$ and $\res(u',\infty)>-1$. 
		
		Consequently, $x_{0}=y$, $|-\frac{y_k-x_k}{r_k} |  \rightarrow\infty$. In turn, for any $r>0$, $u_k'$ solves the following equation on $D_{r}$ when $k$ is sufficiently large:
		\begin{align*}
		-\Delta u_{k}'=K_{k}( x_{k}+r_{k} x ) e^{2u_k'}.
		\end{align*}
		Applying similar argument as in \textbf{Step 1}, $u_k'\to u'$  weakly in $W^{2,p}_{\text{loc}}(\mathbb{R}^2)$ for any $p>1$, where $u'$ satisfies 
		\begin{align*}
		-\Delta u'=K(x_0)e^{2u'},\s \int_{\R^2}e^{2u'}<\infty.
		\end{align*}
		
		Applying the result in \cite{CW-L} again, we deduce that $\res(u',\infty)=0$. 
		\vspace{.1cm}
		
		By Proposition \ref{prop non-positive on neck},
		\begin{align*}
		-2\pi\lambda &\leq\liminf_{r\rightarrow 0}\liminf_{k\rightarrow\infty}\K_{g_k}(D_r(y)\backslash D_{\frac{r_k}{r}}(x_k))\\&\leq-2\pi\big(2+\res(u,y)+\res(u',\infty)\big)<-2 \pi 
		\end{align*}
		which implies that $\lambda>1$. 
		
		Therefore, if $\lambda \leq 1$, $\{u_{k}\}$ has no bubble in $D$. 
	\end{proof}
	
	
	\begin{rem}	\textnormal{
			Synge's theorem tells us that a Riemannian surface with positive Gauss curvature is simply-connected and has no closed minimizing geodesics. Heuristically, Lemma \ref{lemma one.level} (2) can be viewed as a version of Synge's theorem on surfaces with positive curvature measure: if (2) does not hold, the neck which joins the bubble part and the base part has no singularities so it will eventually disappear in the blowup process, and it 
			looks like the middle part of a dumbbell, which is ``thin". Hence a ``closed minimizing geodesic" will occur, see Figure I. }
	\end{rem}
	\begin{center}
		
		\scalebox{.6}{\begin{tikzpicture}
			
			\draw[domain=30:330, samples=300] plot ({4+2*cos(\x)}, {1.6*sin(\x)});
			\draw (5.73,0.8) arc[start angle=-129.77399,end angle=-39.18002, radius=1.46280];
			\draw (12,0) arc[start angle=16.8106,end angle=136.40540, radius=2.49779];
			\draw (7.8,-1) arc[start angle=-136.40540,end angle=-16.8106, radius=2.49783];
			\draw (7.8,-1) arc[start angle=39.18002,end angle=129.77399, radius=1.46305];
			\draw [dashed] (7,-0.5) arc[start angle=-43.59587,end angle=43.59587, radius=0.72511];
			\draw  (7,0.5) arc[start angle=136.3876,end angle=223.6124, radius=0.72489];
			
			\fill (12,0) circle (2pt) node[right] {$P_i$};
			\end{tikzpicture}
		}
		
		\Small{Figure I. There is a closed minimizing geodesic.}
	\end{center}
	
	\begin{center}
		
		\scalebox{0.6}{\begin{tikzpicture}
			
			\draw[domain=30:330, samples=300] plot ({4+2*cos(\x)}, {1.6*sin(\x)});
			\draw (5.73,0.8) arc[start angle=-129.77399,end angle=-39.18002, radius=1.46280];
			\draw (11,0) arc[start angle=2.0217,end angle=143.26971, radius=1.77692];
			\draw (7.8,-1) arc[start angle=-143.26971,end angle=-2.0217, radius=1.77704];
			\draw (7.8,-1) arc[start angle=39.18002,end angle=129.77399, radius=1.46305];
			\draw (7,0.5) arc[start angle=122.82726,end angle=159.71268, radius=1.01248];
			\draw (6.6,0) arc[start angle=-159.75049,end angle=-122.93556, radius=1.01392];
			\draw [dashed] (7,-0.5) arc[start angle=-43.59587,end angle=43.59587, radius=0.72511];
			\fill (6.6,0) circle (2pt) node[left] {$P_i$};
			
			\end{tikzpicture}
		}
		
		\Small{Figure II. The singular point is in the neck region, shortest loops may not be smooth.}
	\end{center}

	\subsection{The structure of bubble tree}
	
	Let $p_{1}, \cdots,p_{m}$ be distinct points on $\mathbb{S}^{2}$. 
	From now on, we set
	$g_k=e^{2u_k}g_{\mathbb{S}^2}\in\mathcal{M}(\mathbb{S}^2)$ with
	\begin{align*}
	\K_{g_k}=K_{k}dV_{g_k}-2\pi\sum_{i=1}^m\beta_i^k\delta_{p_i},
	\end{align*}
	in other words, $u_k\in \cap_{p \in[1,2)}W^{1,p}(\mathbb{S}^2,g_{\mathbb{S}^{2}})$  solves the equation
	\begin{align}
	\label{eq.positive on S2}
	-\Delta_{\mathbb{S}^2} u_k=K_ke^{2u_k}-2\pi\sum_{i=1}^{m} \beta_i^k\delta_{p_i}-1
	\end{align}
	in the sense of distributions. 

	For the remainder of this section, we make the following assumptions:
	\begin{itemize}
		\item[(A1)] $K_k$ is continuous and $K_k\rightarrow K$ in $C(\mathbb{S}^2)$, with $K>0$.
		\item[(A2)] $\beta_i>-1$, and $\beta_i^k\rightarrow\beta_i$.
		\item[(A3)] $\chi(\mathbb{S}^2,\beta)=2+\sum\limits_{i=1}^m\beta_i>0$.
	\end{itemize}
	Then, there exist constants $0<a<b$ depending on $K$ and $\beta_{i}$ such that
	$$
	a<\Area(\mathbb{S}^2,g_k)<b.
	$$
	
	Let $c_k$ be the mean value of $u_k$ over $(\mathbb{S}^{2},g_{\mathbb{S}^{2}})$. By Lemma \ref{lemma gradient estimate on Riemann surfaces}, after passing to a subsequence, we assume $u_k-c_k\to v$ weakly in $\cap_{p\in{(1,2)}}W^{1,p}(\mathbb{S}^{2},g_{\mathbb{S}^{2}})$. By Jensen's inequality, $c_{k}$ is bounded from above. If $c_{k}$ is bounded, we may further assume $u_k\to u$ weakly in $\cap_{p\in{(1,2)}}W^{1,p}(\mathbb{S}^{2},g_{\mathbb{S}^{2}})$. 
	We will use the curvature concentration function $\Theta$ to investigate the equations that $u$ and $v$ satisfy. 
	
	\vspace{.1cm}
	\noindent{\bf Note.} From now on, the phrase ``for any $x_{0} \in \mathbb{S}^{2}$, we choose an appropriate isothermal coordinate system with $x_{0}=0$" means: the domain of $u_{k}$ under this coordinate system is $D \subset \mathbb{R}^{2}$, $\{u_{k}\}$ has no bubbles on $D \backslash \{x_{0}\}$, and if $x_{0} \in \{p_{1}, \cdots,p_{m}\}$ then $D \cap \{ p_{1} \cdots,p_{m}\}=\{x_{0}\}$; if $x_{0} \notin \{p_{1}, \cdots,p_{m}\}$ then $D \cap \{ p_{1}, \cdots,p_{m}\}=\emptyset$. 
	
	\vspace{.1cm}		
	We divide the proof of Theorem \ref{theorem metric convergence on S2} into following propositions.

	\begin{pro}
		\label{pro theta's value when no bubble}
		Let $g_{k}=e^{2u_{k}} g_{\mathbb{S}^{2}} \in \mathcal{M}(\mathbb{S}^{2})$ where $u_{k}$ are the solutions of (\eqref{eq.positive on S2}) satisfying \textnormal{(A1), (A2), (A3)}. 
		If $\{u_{k}\}$ has no bubble at $x_{0}$, then
		\begin{align*}
		\Theta(x_{0})=
		\left\{\begin{array}{rll}
		&-2 \pi \beta_i & \text{if } x_{0} =p_{i}\in \{p_{1},\cdots,p_{m}\}, \\
		&0 & \text{if } x_{0} \notin \{p_{1}, \cdots, p_{m}\}. 
		\end{array}\right.
		\end{align*}
	\end{pro}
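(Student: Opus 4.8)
\textbf{Proof plan for Proposition \ref{pro theta's value when no bubble}.}

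The plan is to localize around $x_0$ and reduce the statement to the local results of Section 2. First I would choose an appropriate isothermal coordinate system with $x_0=0$ (in the sense of the Note above), so that $u_k$ is defined on a disk $D\subset\mathbb R^2$, has no bubble on $D\setminus\{0\}$, and $D$ meets $\{p_1,\dots,p_m\}$ in at most the point $0$. In these coordinates the equation \eqref{eq.positive on S2} becomes, on $D$,
\begin{align*}
-\Delta u_k = \tilde K_k e^{2u_k} - 2\pi\beta_i^k\delta_0
\end{align*}
if $x_0=p_i$ (and with no Dirac term if $x_0\notin\{p_1,\dots,p_m\}$), where $\tilde K_k$ absorbs the conformal factor of $g_{\mathbb S^2}$ relative to $g_{\euc}$; note $\tilde K_k\to\tilde K>0$ in $C(\overline D)$ since the conformal factor is smooth and positive. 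The hypotheses (A1)--(A3) together with Lemma \ref{lemma gradient estimate on Riemann surfaces} and the area bound $a<\Area(\mathbb S^2,g_k)<b$ give the uniform bounds $\Area(D,g_k)\le\Lambda_1$ and $r^{-1}\|\nabla u_k\|_{L^1(D_r(x))}\le\Lambda_2$ for all $D_r(x)\subset D$, so that hypotheses (P1)--(P3) of the local propositions are met (with $f_k=\tilde K_k\ge 1$ after possibly shrinking $D$ so that $\tilde K_k\ge\frac12\min\tilde K$ and rescaling, or simply invoking the versions with $f_k\ge c>0$).

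Next I would split into the two cases according to whether $c_k$ stays bounded or $c_k\to-\infty$. Since $\{u_k\}$ has no bubble on $D\setminus\{0\}$ and no bubble at $0$, by hypothesis there is no blowup sequence at any point of $D$; thus Proposition \ref{pro no area concentration at zero on D} applies and gives $\lim_{r\to0}\lim_{k\to\infty}\Area(D_r,g_k)=0$. I then apply Proposition \ref{prop convergence.with singular point} (whose hypotheses (1)--(4) are exactly the uniform bounds just verified, plus $\beta_i^k\to\beta_i$ and $f_k$ bounded below by a positive constant): after passing to a subsequence, either $u_k\to u$ weakly in $\cap_p W^{1,p}(D_{1/2})$ with $e^{2u_k}\to e^{2u}$ in $L^1$ and $-\Delta u=\tilde K e^{2u}-2\pi\beta_i\delta_0$, or $u_k\to-\infty$ a.e. with $e^{2u_k}\to 0$ in $L^1(D_{1/2})$.

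Finally I would compute $\Theta(x_0)$ directly from the definition $\Theta(x_0)=\lim_{r\to0}\lim_{k\to\infty}\K_{g_k}(B_r^{g_{\mathbb S^2}}(x_0))$. On $B_r\setminus\{x_0\}$ we have $\K_{g_k}=K_k e^{2u_k}dV_{g_k}$, and $\K_{g_k}(\{x_0\})=-2\pi\beta_i^k$ if $x_0=p_i$ and $0$ otherwise. Hence $\K_{g_k}(B_r)=\int_{B_r}K_k e^{2u_k}\,dV_{g_k}-2\pi\beta_i^k$ (the Dirac term present only at a $p_i$). Since $K_k\to K$ in $C$ and $\Area(B_r,g_k)$ is uniformly small (going to $0$ as $r\to0$ uniformly in $k$ by Proposition \ref{pro no area concentration at zero on D}), the integral term is bounded by $(\max K_k)\Area(B_r,g_k)$, which tends to $0$ as $r\to0$ after taking $k\to\infty$; in the convergent case one can alternatively pass to the limit using $e^{2u_k}\to e^{2u}$ in $L^1$ and then let $r\to0$, and in the collapsing case the integral term is already going to $0$. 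Therefore $\Theta(x_0)=-2\pi\beta_i$ when $x_0=p_i$ and $\Theta(x_0)=0$ otherwise, as claimed.

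\textbf{Main obstacle.} The only delicate point is making sure the area-concentration control is uniform enough to kill the $\int_{B_r}K_k e^{2u_k}dV_{g_k}$ term in the correct order of limits ($k\to\infty$ first, then $r\to0$); this is precisely what Proposition \ref{pro no area concentration at zero on D} provides once the no-bubble hypothesis is in force, so the argument is really a matter of correctly citing and assembling the local results rather than proving something new. A secondary bookkeeping issue is handling the conformal factor relating $g_{\mathbb S^2}$ and $g_{\euc}$ so that $f_k\ge 1$ (or $\ge$ a positive constant) holds on the chosen coordinate disk, which is harmless after shrinking $D$.
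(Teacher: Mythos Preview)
Your proposal is correct and follows essentially the same route as the paper: localize in an appropriate isothermal chart, write the equation with or without the Dirac term, and use Proposition \ref{pro no area concentration at zero on D} to kill the integral $\int_{D_r}K_k e^{2u_k}$ in the iterated limit. The paper's proof is just a streamlined version of yours: it skips the case split on $c_k$ and the appeal to Proposition \ref{prop convergence.with singular point}, since neither is needed---once Proposition \ref{pro no area concentration at zero on D} gives $\lim_{r\to0}\lim_{k\to\infty}\Area(D_r,g_k)=0$, the bound $\big|\int_{D_r}K_k e^{2u_k}\big|\le 2\|K\|_{C^0}\Area(D_r,g_k)$ already handles both cases at once.
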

	
	\begin{proof}
		By choosing an appropriate isothermal coordinate system with $x_{0}=0$, $u_{k}$ solves the following equation locally:
		\begin{align*}
		-\Delta u_{k}=
		\left\{\begin{array}{rll}
		&K_{k}e^{2u_{k}}-2 \pi \beta_i^{k} \delta_{0}  & \text{when } x_{0} \in \{p_{1},\cdots,p_{m}\}, \\
		&K_{k}e^{2u_{k}}  & \text{when } x_{0} \notin \{p_{1}, \cdots, p_{m}\} . 
		\end{array}\right.
		\end{align*}
		Applying Proposition \ref{pro no area concentration at zero on D}, if $x_{0} \notin \{p_{1},\cdots,p_{m}\}$, 
		\begin{align*}
		\Theta(x_{0})&=\lim_{r \rightarrow 0} \lim_{k \rightarrow \infty} \int_{D_{r}} K_{k} e^{2u_{k}}  \leq 2 \|K\|_{C^{0}}    \lim_{r \rightarrow 0} \lim_{k \rightarrow \infty} \int_{D_{r}} e^{2u_{k}} =0;
		\end{align*}
		if $x_{0} =p_{i} \in \{p_{1},\cdots,p_{m}\}$, 
		\begin{align*}
		\Theta(x_{0})&=\lim_{r \rightarrow 0} \lim_{k \rightarrow \infty} \big( \int_{D_{r}} K_{k} e^{2u_{k}} 
		-2 \pi \beta_{i}^{k} \delta_{0} \big)=-2 \pi \beta_{i}. 
		\end{align*}
	\end{proof}
	Denote 
	\begin{align*}
	\mathscr{C}=\{  x \in \mathbb{S}^{2}: \{u_{k} \} \text{ has at least a bubble at  } x  \} \cup \{p_{1}, \cdots,p_{m}\}. 
	\end{align*}
	We know that $	\mathscr{C}$ consists of finite points, then by Proposition \ref{pro theta's value when no bubble}, the following sum is well-defined:
	\begin{align*}
	\sum_{x \in \mathbb{S}^{2} } \Theta(x) \delta_{x}=\sum_{x \in \mathscr{C}} \Theta(x) \delta_{x}.
	\end{align*}
	
	\begin{pro}
		\label{pro equation of u and v using Theta}
		Let $g_{k}=e^{2u_{k}} g_{\mathbb{S}^{2}} \in \mathcal{M}(\mathbb{S}^{2})$ where $u_{k}$ are the solutions of \eqref{eq.positive on S2} satisfying \textnormal{(A1), (A2), (A3)}. Then \\
		\textnormal{(1)} If $u_{k} \rightarrow u$ weakly in $\cap_{p\in{(1,2)}}W^{1,p}(\mathbb{S}^{2},g_{\mathbb{S}^{2}})$, 
		then $u$ solves
		\begin{align*}
		-\Delta_{\mathbb{S}^{2}} u=K e^{2u}+\sum_{x \in \mathbb{S}^{2}} \Theta(x) \delta_{x}-1. 
		\end{align*}
		\textnormal{(2)} If $u_{k}-c_{k} \rightarrow v$ weakly in $\cap_{p\in{(1,2)}}W^{1,p}(\mathbb{S}^{2},g_{\mathbb{S}^{2}})$ with $c_{k} \rightarrow -\infty $, 
		then $v$ solves
		\begin{align*}
		-\Delta_{\mathbb{S}^{2}} v=\sum_{x \in \mathbb{S}^{2}} \Theta(x) \delta_{x}-1.
		\end{align*}
	\end{pro}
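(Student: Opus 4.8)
\textbf{Proof proposal for Proposition \ref{pro equation of u and v using Theta}.}

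The plan is to establish both equations by a partition-of-unity / exhaustion argument, reducing the global statement on $\mathbb{S}^2$ to a local analysis near each point of the finite set $\mathscr{C}$, where the local behavior is already controlled by the area-concentration and curvature-convergence results collected in Section 2. First I would note that outside any fixed neighbourhood of $\mathscr{C}$, the sequence $\{u_k\}$ has no bubbles, so by Theorem \ref{thm convergence of solutions of Radon measure when measure is small} and the $L^p$-estimate (exactly as in Step 1 of the proof of Lemma \ref{lemma one.level}) $e^{2u_k}\to e^{2u}$ in $L^1_{\loc}(\mathbb{S}^2\setminus\mathscr{C})$ in case (1), while $e^{2u_k}\to 0$ in $L^1_{\loc}(\mathbb{S}^2\setminus\mathscr{C})$ in case (2) (since $c_k\to-\infty$ forces the collapsing alternative). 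In either case, passing to the distributional limit in \eqref{eq.positive on S2} against test functions supported in $\mathbb{S}^2\setminus\mathscr{C}$ shows that $-\Delta_{\mathbb{S}^2}u = Ke^{2u}-1$ (resp. $-\Delta_{\mathbb{S}^2}v = -1$) holds away from $\mathscr{C}$; equivalently, the distribution $-\Delta_{\mathbb{S}^2}u - Ke^{2u}+1$ (resp. $-\Delta_{\mathbb{S}^2}v+1$) is supported on the finite set $\mathscr{C}$.

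Next I would identify the coefficient of the Dirac mass at each $x_0\in\mathscr{C}$. A distribution supported at a single point is a finite linear combination of derivatives of $\delta_{x_0}$; the regularity $u,v\in\bigcap_{p\in(1,2)}W^{1,p}$ rules out derivatives of $\delta_{x_0}$ of order $\geq 1$, so the defect is exactly $a_{x_0}\,\delta_{x_0}$ for some constant $a_{x_0}$. To compute $a_{x_0}$, choose an appropriate isothermal chart with $x_0=0$ on a disk $D$ as in the Note preceding the proposition, and test against a cutoff $\varphi$ equal to $1$ near $0$ and supported in $D$. In case (1), using $\int_{\partial D_t}\partial_r u_k\to\int_{\partial D_t}\partial_r u$ for a.e. $t$ (from weak $W^{1,p}$-convergence and the Gauss–Bonnet identities), together with $\lim_{t\to 0}\int_{\partial D_t}\partial_r u = -\K_{g}(\{0\})$, gives
\begin{align*}
a_{x_0} = \lim_{t\to 0}\lim_{k\to\infty}\K_{g_k}(B_t^{g_{\mathbb{S}^2}}(x_0)) = \Theta(x_0),
\end{align*}
where the interchange of limits is justified because $\K_{g_k}=K_ke^{2u_k}dV_{g_k}-2\pi\beta_i^k\delta_{p_i}$ converges on $B_t\setminus\{0\}$ (no bubbles there) plus a controlled atomic part. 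The same computation applies verbatim in case (2) with $v$ in place of $u$, since the equation for $v$ contains no $Ke^{2v}$ term ($e^{2u_k}\to 0$) and the curvature measure $\K_{g_k}$ is unchanged. Summing the contributions over the finitely many $x_0\in\mathscr{C}$ and invoking Proposition \ref{pro theta's value when no bubble} to handle the points where $\Theta$ vanishes yields the stated equations.

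The main obstacle I anticipate is making the double limit $\lim_{t\to 0}\lim_{k\to\infty}\K_{g_k}(B_t(x_0))$ rigorous and confirming it equals the Dirac coefficient $a_{x_0}$: one must verify that the curvature mass escaping into the bubbles and neck regions near $x_0$ is precisely captured by $\Theta(x_0)$ and does not leak out, which is exactly what the neck estimate Proposition \ref{prop non-positive on neck} and the area identity Theorem \ref{theorem area convergence of bubble tree} are designed to control. A secondary technical point is the justification, in case (2), that $c_k\to-\infty$ indeed forces $e^{2u_k}\to 0$ locally (not merely globally in measure) near every point, including the bubble points; this should follow from the collapsing alternative in Theorem \ref{thm convergence of solutions of Radon measure when measure is small} applied chart-by-chart, noting that a non-collapsing chart would force $c_k$ bounded below, a contradiction. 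Once these points are in place, the assembly via partition of unity is routine.
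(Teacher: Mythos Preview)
Your proposal is correct and reaches the same destination as the paper, but by a longer route. The paper's argument is a direct five-line computation: fix $x_0$, take an appropriate isothermal chart with $x_0=0$, and for an \emph{arbitrary} test function $\varphi\in\mathcal{D}(D)$ write
\[
\int_D \nabla u\,\nabla\varphi
=\lim_k\int_D\nabla u_k\,\nabla\varphi
=\lim_k\int_D\varphi\,d\K_{g_k}
=\lim_{r\to 0}\lim_k\Big(\int_{D\setminus D_r}+\int_{D_r}\Big)\varphi\,d\K_{g_k}.
\]
On $D\setminus D_r$ there are no bubbles, so $K_ke^{2u_k}\to Ke^{2u}$ in $L^1$ and the first piece yields $\int_D\varphi\,Ke^{2u}$; the second piece is $\varphi(0)\Theta(x_0)$ \emph{by definition} of $\Theta$. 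This directly gives the local equation $-\Delta u=Ke^{2u}+\Theta(x_0)\delta_0$ for every $\varphi$, with no intermediate steps.

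By contrast, you first localize the defect distribution to the finite set $\mathscr{C}$, then invoke a structure theorem for point-supported distributions and a regularity argument to rule out derivatives of $\delta$, and only then compute the Dirac coefficient via boundary integrals. All of this is subsumed by testing against a general $\varphi$ from the start. In particular, your stated ``main obstacle'' --- making the double limit $\lim_{t\to 0}\lim_k\K_{g_k}(B_t(x_0))$ rigorous --- is not an obstacle at all: that double limit \emph{is} $\Theta(x_0)$ by definition, and neither the neck estimate (Proposition \ref{prop non-positive on neck}) nor the area identity (Theorem \ref{theorem area convergence of bubble tree}) is needed here; those tools enter only later, when one computes the actual values of $\Theta$ in Propositions \ref{pro Theta's value when uk converges} and \ref{pro v's equation when uk not converge}. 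Your approach does buy one thing: it makes the decomposition ``equation away from $\mathscr{C}$ plus atomic correction on $\mathscr{C}$'' conceptually explicit, which may be pedagogically clearer.
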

	
	\begin{proof}
		Since the proofs of (1) and (2) are almost the same, we only present the proof for (1). 
		
		At any fixed $x_{0} \in \mathbb{S}^{2}$, we choose an appropriate isothermal chart with $x_{0}=0$.  
		Then for any $\varphi\in \mathcal{D}(D)$, 
		\begin{align}
		\int_{D} &\nabla u \nabla\varphi =\lim_{k \rightarrow \infty} \int_{D} \nabla u_{k} \nabla \varphi =\lim_{k \rightarrow\infty}\int_{D} \varphi d\K_{g_k} \nonumber =\lim_{r \rightarrow 0} \lim_{k \rightarrow\infty}\left( \int_{D\setminus D_r}+\int_{D_r}\right) \varphi  d\K_{g_k}\nonumber\\
		&=  \int_D \varphi Ke^{2u}+\varphi(0)\lim_{r \rightarrow 0} \lim_{k \rightarrow\infty}\K_{g_k}(D_r) \nonumber =\int_{D} \varphi K e^{2u}+\Theta(x_{0}) \varphi(0), \nonumber 
		\end{align}
		which yields that locally $u$ solves
		\begin{align*}
		-\Delta u=K e^{2u}+ \Theta(x_{0}) \delta_{0}=K e^{2u}+\sum_{x \in D} \Theta(x) \delta_{X}.  
		\end{align*}
		Therefore, $u$ solves
		\begin{align*}
		-\Delta_{\mathbb{S}^{2}} u=K e^{2u}+\sum_{x \in \mathbb{S}^{2}} \Theta(x) \delta_{x}-1. 
		\end{align*}
	\end{proof}
	
	To obtain the explicit equations that $u$ and $v$ satisfy, we need to compute $\Theta(x_{0})$ when $\{u_{k}\}$ has at least one bubble at $x_{0}$. 
	
	\begin{pro}
		\label{pro Theta's value when uk converges}
		Let $g_{k}=e^{2u_{k}} g_{\mathbb{S}^{2}} \in \mathcal{M}(\mathbb{S}^{2})$ where $u_{k}$ are the solutions of \eqref{eq.positive on S2} satisfying \textnormal{(A1), (A2), (A3)}. 
		Suppose $u_{k} \rightarrow u$ weakly in $\cap_{p\in{(1,2)}}W^{1,p}(\mathbb{S}^{2},g_{\mathbb{S}^{2}})$. If $\{u_{k}\}$ has at least one bubble at some $x_{0} \in \mathbb{S}^{2}$, then
		$x_{0}=p_{i}$ for some $i \in \{1,\cdots,m\}$ satisfying $\beta_{i}>1$, all bubbles of 
		$\{u_{k}\}$ at $x_{0}$ are smooth and at $1$-level. 
		Moreover, if $s$ is the number of bubbles at $x_{0}$, then 
		\begin{align*}
		\Theta(x_{0})=4 \pi s-2 \pi \beta_{i}. 
		\end{align*}
	\end{pro}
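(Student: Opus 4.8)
The plan is to localize near $x_0$, use Lemma~\ref{lemma one.level} to identify the type of the point and of the bubbles, and then evaluate $\Theta(x_0)$ from the area identity of Theorem~\ref{theorem area convergence of bubble tree} together with the Chen--Li classification of entire solutions of $-\Delta v=K_0e^{2v}$. \emph{Localization and the type of $x_0$:} following the Note preceding the propositions, pick an isothermal chart with $x_0=0$ whose domain $D$ meets $\{p_1,\dots,p_m\}$ in at most $\{0\}$ and on which $\{u_k\}$ has no bubble away from $0$; there $u_k$ solves $-\Delta u_k=K_ke^{2u_k}-2\pi\lambda_k\delta_0$ with $\lambda_k=\beta_i^k$ if $0=p_i$ and $\lambda_k\equiv 0$ otherwise. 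The hypotheses of Lemma~\ref{lemma one.level} hold: (a) from $a<\Area(\mathbb S^2,g_k)<b$, (b) from Lemma~\ref{lemma gradient estimate on Riemann surfaces}, (c) is the standing assumption $u_k\to u$, (d) is (A1) with $K>0$, (e) follows from (A2). Since $\{u_k\}$ has a bubble at $0$, part~(3) of Lemma~\ref{lemma one.level} forces $\lambda=\lim_k\lambda_k>1$, so $0=p_i$ for some $i$ and $\lambda=\beta_i>1$; and part~(2) shows every blowup sequence $\{(x_k,r_k)\}$ at $0$ has $x_k\to 0$ with $|x_k|/r_k\to\infty$ (so $p_i$ lies in the neck, outside the bubble) and a smooth bubble.

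\emph{Only $1$-level bubbles, and their area.} By Step~2 of the proof of Lemma~\ref{lemma one.level}, each $1$-level bubble arises from $u_k'(x)=u_k(x_k+r_kx)+\log r_k\rightharpoonup v$ in $W^{2,p}_{\mathrm{loc}}(\mathbb R^2)$ for all $p>1$, with $-\Delta v=K(x_0)e^{2v}$ and $\int_{\mathbb R^2}e^{2v}<\infty$. Such a bound for every $p$ makes $e^{2u_k'}$ locally bounded, so $\{u_k'\}$ has no bubble; since any higher-level blowup sequence at $0$ would, by Remark~\ref{remark different bubble relationship}, be a bubble of some $\{u_k'\}$, the tree at $p_i$ is purely $1$-level, and comparing the pairwise disjoint concentration disks with the area bound shows it has finitely many bubbles, say $s$. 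By \cite{CW-L}, every such $v$ is the standard bubble with $\int_{\mathbb R^2}K(x_0)e^{2v}=4\pi$, i.e. $\Area(\mathbb R^2,e^{2v}g_{\euc})=4\pi/K(x_0)$.

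\emph{Evaluation of $\Theta(x_0)$.} From the local expression $\K_{g_k}=K_ke^{2u_k}dx-2\pi\beta_i^k\delta_0$,
\[
\Theta(x_0)=\lim_{r\to 0}\lim_{k\to\infty}\K_{g_k}(D_r)=\Big(\lim_{r\to 0}\lim_{k\to\infty}\int_{D_r}K_ke^{2u_k}\Big)-2\pi\beta_i.
\]
Since $K_k\to K$ uniformly and $K$ is continuous, $\big|\int_{D_r}K_ke^{2u_k}-K(x_0)\Area(D_r,g_k)\big|\le(\|K_k-K\|_{C^0}+\omega_K(r))\,b$ with $\omega_K$ a modulus of continuity of $K$, hence $\lim_{r\to 0}\lim_k\int_{D_r}K_ke^{2u_k}=K(x_0)\lim_{r\to 0}\lim_k\Area(D_r,g_k)$. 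After the evident scaling reductions (rescaling the chart to put $D_r$ at unit scale, and the metric by a constant so the curvature function is $\ge 1$), Theorem~\ref{theorem area convergence of bubble tree} gives $\lim_k\Area(D_r,g_k)=\Area(D_r,g_\infty)+\sum_{j=1}^s\Area(\mathbb R^2,e^{2v_j}g_{\euc})$; as $e^{2u}\in L^1$ is atomless, $\Area(D_r,g_\infty)\to 0$ as $r\to 0$, whence $\lim_{r\to 0}\lim_k\Area(D_r,g_k)=4\pi s/K(x_0)$ and $\lim_{r\to 0}\lim_k\int_{D_r}K_ke^{2u_k}=4\pi s$. Therefore $\Theta(x_0)=4\pi s-2\pi\beta_i$.

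The substance is concentrated in the second step: excluding higher levels and identifying every bubble with the smooth standard solution, so that the conical singularity is pushed into the neck and the residue at $\infty$ vanishes. This is exactly where Lemma~\ref{lemma one.level} and the Chen--Li rigidity are indispensable. The last step is then bookkeeping, the only delicate points being the scaling reductions needed to invoke the area identity at every small scale and the removal of the base-part area in the limit $r\to 0$.
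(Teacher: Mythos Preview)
Your proof is correct and follows essentially the same approach as the paper: localize, apply Lemma~\ref{lemma one.level} to force $x_0=p_i$ with $\beta_i>1$ and to make every bubble smooth with the singularity in the neck, rule out higher levels by noting that each rescaled sequence $u_k'$ satisfies a nonsingular equation and hence (again by Lemma~\ref{lemma one.level}) has no bubble, and finish with the area identity and the Chen--Li value $\int K(x_0)e^{2v}=4\pi$. The only cosmetic difference is that to exclude $2$-level bubbles the paper invokes Lemma~\ref{lemma one.level}(3) directly on $\{u_k'\}$ (with $\lambda=0\le 1$), whereas you route through the $W^{2,p}_{\mathrm{loc}}$ convergence recorded in Step~2 of that lemma; these are the same argument in different packaging, and your extra care about the scaling needed to invoke Theorem~\ref{theorem area convergence of bubble tree} at each small radius, and about $\Area(D_r,g_\infty)\to 0$, only makes explicit what the paper leaves implicit.
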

	
	\begin{proof}
		If $\{u_{k}\}$ has at least one bubble at some $x_{0} \in \mathbb{S}^{2}$, we choose an appropriate isothermal chart with $x_{0}=0$. Then the following equation holds on $D$:
		\begin{align*}
		-\Delta u_{k}=
		\left\{\begin{array}{rll}
		&K_{k}e^{2u_{k}}-2 \pi \beta_i^{k} \delta_{0}  & \text{when } x_{0} \in \{p_{1},\cdots,p_{m}\}, \\
		&K_{k}e^{2u_{k}}  & \text{when } x_{0} \notin \{p_{1}, \cdots, p_{m}\} . 
		\end{array}\right.
		\end{align*}
		If $x_{0} \notin \{p_{1},\cdots,p_{m}\}$, then 
		$\{u_{k}\}$ has no bubbles at 0 by Lemma \ref{lemma one.level}(3), a contradiction. 
		So $x_{0} \in \{p_{1},\cdots,p_{m}\}$. WLOG, we assume $x_{0}=p_{1}$. Then $u_{k}$ satisfies  
		\begin{align*}
		-\Delta u_{k}=K_{k} e^{2u_{k}}-2 \pi \beta_{1}^{k} \delta_{0}
		\end{align*}
		on $D$. By Lemma \ref{lemma one.level} (3), $\beta_{1}>1$. 
		By Lemma \ref{lemma one.level} (2), all bubbles at 1-level are smooth. 
		We claim that  there is no bubble at 2-level.  Assume $\{ (x_k,r_k) \}$ is a blowup sequence at 1-level, then by Lemma  \ref{lemma one.level} (2), for any $t>0$, $0 \in D_{t} \backslash D_{ \frac{r_{k}}{t} }(x_{k})$ for sufficiently large $k$. Then for any $r>0$, $u_{k}^{\prime}(x)=u_{k}(r_{k}x+x_{k}) +\log r_{k}$ solves the following equation on $D_{r}$ when $k$ is sufficiently large:
		\begin{align*}
		-\Delta u_{k}^{\prime}=K_{k}(r_{k}x+x_{k}) e^{2u_{k}^{\prime}}. 
		\end{align*}
		By Lemma  \ref{lemma one.level} (3), $\{ u_{k}^{\prime} \}$ has no bubble, which means that there does not exist bubbles at 2-level. 
		
		Since each bubble is smooth with total curvature $4\pi$, there exist only finitely many bubbles, saying
		$\{ (x_k^1,r_k^1)\}$, $\cdots$, $\{ (x_k^s,r_k^s) \}$. Then by Theorem \ref{theorem area convergence of bubble tree},
		\begin{align*}
		\Theta(x_{0})&=\lim_{r\rightarrow 0}\lim_{k\rightarrow \infty}\K_{g_k}(D_r )\\
		&= \lim_{r\rightarrow 0}\lim_{k\rightarrow \infty} \Big( \int_{D_{r}} K_{k} e^{2u_{k}}-2 \pi \beta_{1}^{k} \delta_{0} \Big)\\
		&= K(x_{0})\lim_{r\rightarrow 0}\lim_{k\rightarrow \infty} \Area(D_r, g_{k})-2\pi\beta_{1}\\
		&=4\pi s-2\pi \beta_{1}. 
		\end{align*}
	\end{proof}
	
	\begin{center}
		\scalebox{0.7}{
			\begin{tikzpicture}
			\draw[line width=1.3pt] (2.1,1) circle (1pt);
			\draw[line width=0.7pt] (2,1.1) circle (2);
			\draw[line width=0.5pt] (1.3,1.5) circle (0.3);
			\draw[line width=0.5pt] (2.3,0.3) circle (0.4);
			
			\draw (2.4,1) node { $0$};
			\draw (1.3,2.1) node {  $D_{Rr_k^1(x_k^1)}$};
			\draw (2.3,-.3) node {  $D_{Rr_k^1(x_k^2)}$};

			\draw (12,-1) arc(40:140:4);
			\draw[line width=1.3pt] (8.9,0.42) circle (1pt);
			\draw[rotate around={30:(8,2.25)},line width=0.7pt] (8,2.25) ellipse (1 and 2);
			\draw[rotate around={-27:(9.5,1.9)},line width=1pt,opacity=0.3] (9.5,1.9) ellipse (1 and 1.6);

			\draw (8.9,0) node { $0$};
			\draw (7.5,3) node {  $({S}^2,g_{v^1})$};
			\draw (10,2.5) node {  $({S}^2,g_{v^2})$};
			\draw (10,-1) node {  $(D,g_{v})$};
			\end{tikzpicture}}
		
		{\Small{\bf Local figure of Non-Collapsing Case:} \\ All bubbles of $\{u_{k}\}$ at $p_{i}$ with $\beta_{i} >1$ are smooth and at 1-level.}
	\end{center}

	\begin{center}
		\scalebox{0.7}{
			\begin{tikzpicture}[rotate=-270]
			
			\draw[line width=1.3pt]plot[smooth]coordinates{(6,2)(5,0)(6,-1)(8,0)(7,1.2)(6,2)};
			
			\begin{scope}[shift={(-1,-0.3)}]
			\draw[rotate around={19:(6.8,3.4)},line width=0.6pt] (6.8,3.4) ellipse (0.7 and 1.2);
			\draw[rotate around={-27:(7.5,3.1)},line width=1pt,opacity=0.2] (7.5,3.1) ellipse (0.6 and 0.9);
			\end{scope}
			
			\begin{scope}[shift={(4.9, 1.9)}, rotate around={-40:(1,1)},scale=0.5]
			\draw[rotate around={-27:(9.5,1.9)},line width=1pt,opacity=0.4] (9.5,1.9) ellipse (1 and 1.6);
			\end{scope}
			
			\end{tikzpicture}}
		
		{\Small{\bf Global figure of Non-collapsing Case:}  \\All bubbles are smooth, and their south poles are attached to a singularity.}
	\end{center}
	
	\begin{pro}
		\label{pro v's equation when uk not converge}
		Let $g_{k}=e^{2u_{k}} g_{\mathbb{S}^{2}} \in \mathcal{M}(\mathbb{S}^{2})$ where $u_{k}$ are solutions of \eqref{eq.positive on S2} satisfying \textnormal{(A1), (A2), (A3)}. Suppose 
		$u_{k} -c_{k}\rightarrow v$ weakly in $\cap_{p\in{(1,2)}}W^{1,p}(\mathbb{S}^{2},g_{\mathbb{S}^{2}})$ with $c_{k} \rightarrow -\infty$. If $\{u_{k}\}$ has at least one bubble at some $x_{0} \in \mathbb{S}^{2}$, then one of the following holds:
		
		\textnormal{(1)} $x_{0} \notin \{p_{1}, \cdots,p_{m}\}$ and 
		$\{u_{k}\}$ has $s$ bubbles  at $1$-level at $x_{0}$. Each bubble is smooth and
		$
		\Theta(x_{0})=4\pi s.
		$
		
		\textnormal{(2)} $x_{0}=$ some $p_{i}$ and  $\{u_{k}\}$ has $s$ bubbles at $1$-level at $x_{0}$. Each bubble is smooth and
		$
		\Theta(x_{0})=4\pi s-2\pi \beta_{i}.
		$
		
		\textnormal{(3)} $x_{0}=$ some $p_{i}$ and $\{u_{k}\}$ has $s$ bubbles at $x_{0}$ while only one of them is singular and the others are all smooth with 
		$
		\Theta(x_{0})=4 \pi s+2 \pi \beta_{i}. 
		$
		
		\textnormal{(4)} $x_{0}=$ some $p_{i}$ and 
		$\{ u_k^i\}$ has s bubbles at $1$-level and $s^{\prime}$ bubbles at $2$-level at $x_{0}$. Exactly one of the bubbles at $1$-level is singular, say $v'$, while the other bubbles at $1$-level are smooth. All bubbles at $2$-level are right on the top of $v'$ and smooth with 
		$
		\Theta(x_{0})=4 \pi (s-s')+2 \pi \beta_{i}.
		$
	\end{pro}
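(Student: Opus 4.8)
The plan is to run the same kind of neck analysis already used in Lemma \ref{lemma one.level} and Proposition \ref{pro Theta's value when uk converges}, but now in the collapsing regime $c_k\to-\infty$, so that the relevant bubbles may be ghost bubbles of $\{u_k\}$ but are genuine bubbles on the rescaled surfaces. First I would fix $x_0\in\mathscr C$, choose an appropriate isothermal chart with $x_0=0$ so that $\{u_k\}$ has no bubbles on $D\setminus\{0\}$, and organize the bubble tree at $0$ by levels. The key local equation on $D$ is $-\Delta u_k=K_k e^{2u_k}-2\pi\beta_1^k\delta_0$ (or without the Dirac term if $x_0\notin\{p_1,\dots,p_m\}$). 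On any $1$-level blowup sequence $\{(x_k^j,r_k^j)\}$ the rescaling $u_k'(x)=u_k(x_k^j+r_k^jx)+\log r_k^j$ satisfies $-\Delta u_k'=K_k(x_k^j+r_k^jx)e^{2u_k'}$ away from the (at most one) singular point; applying Step 1 of the proof of Lemma \ref{lemma one.level}, together with the classification \cite{CW-L} and Lemma \ref{lemma removable singularity}(2), I get that every bubble has $\operatorname{res}(\cdot,\infty)>-1$ and — if smooth — equals the standard bubble with $\operatorname{res}(\cdot,\infty)=0$ and total curvature $4\pi$. By Lemma \ref{lemma one.level}(2)–(3) a singular bubble at $0$ can occur only at a point $p_i$, carries the singularity $2\pi\beta_i$, and forces $\beta_i>1$; and by the same lemma the singular point $y_k=0$ sits in the neck region joining that bubble to the base, so there is at most one singular bubble in the whole tree at $x_0$.

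The case split (1)–(4) then comes from (i) whether $x_0\in\{p_1,\dots,p_m\}$, and (ii) whether the tree has a singular bubble and whether it has a $2$-level. The structural claims are: all bubbles not lying over the singular bubble are smooth and at $1$-level (by Lemma \ref{lemma one.level}(3) applied to their rescalings, exactly as in Proposition \ref{pro Theta's value when uk converges}); and any $2$-level bubble must sit right on the top of the singular bubble $v'$ — indeed a $2$-level bubble over a \emph{smooth} $1$-level bubble would, after the double rescaling described in Remark \ref{remark different bubble relationship}(1), give a bubble of the smooth limit profile which is impossible by Lemma \ref{lemma one.level}(3) since that profile has no singularity and curvature still bounded below by a positive constant. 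So $2$-level bubbles exist only in case (4), they are smooth, and they all sit over $v'$.

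To compute $\Theta(x_0)$ I would invoke the area identity, Theorem \ref{theorem area convergence of bubble tree}, on $D_{1/2}$. Since $c_k\to-\infty$ the base contributes zero area (Proposition \ref{pro no area concentration at zero on D} applied after subtracting $c_k$, giving $\lim_{r\to0}\lim_k\operatorname{Area}(D_r,g_k)=$ sum of bubble areas only), so
\begin{align*}
\Theta(x_0)=\lim_{r\to0}\lim_{k\to\infty}\K_{g_k}(D_r)=K(x_0)\sum_{j}\operatorname{Area}(\mathbb R^2,e^{2v_j}g_{\euc})-2\pi\beta_i\cdot\mathbf 1_{\{x_0\in\{p_i\}\}}.
\end{align*}
Each smooth bubble contributes $\operatorname{Area}=4\pi/K(x_0)$, hence total curvature $4\pi$; for the singular bubble $v'$ I would apply Theorem \ref{theorem area convergence of bubble tree} \emph{on the bubble's own domain} (as in Remark \ref{remark different bubble relationship}(1), $v'$ is a bubble of the rescaled sequence $v_k$ that carries the $s'$ smooth $2$-level bubbles), so that the area of the region producing $v'$ equals $\operatorname{Area}(\mathbb R^2,e^{2v'}g_{\euc})+s'\cdot 4\pi/K(x_0)$; combined with $v'$ being a singular sphere with a cone angle $2\pi(\beta_i+1)$, its intrinsic curvature total is $4\pi+2\pi\beta_i$, and then the $s'$ smooth caps sitting on it are \emph{not} seen by $v'$'s intrinsic curvature but \emph{are} seen by $\K_{g_k}(D_r)$, which produces the net $-4\pi s'$ shift. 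Collecting terms: in case (1), $\Theta=4\pi s$; in (2), $\Theta=4\pi s-2\pi\beta_i$; in (3) with $s$ total bubbles one of which is singular (no $2$-level), $\Theta=4\pi(s-1)+(4\pi+2\pi\beta_i)-2\pi\beta_i$... wait — here one must be careful that in (3) the stated answer is $4\pi s+2\pi\beta_i$, which corresponds to counting the singular bubble's full curvature $4\pi+2\pi\beta_i$ \emph{plus} $4\pi(s-1)$ from the smooth ones \emph{minus} the $-2\pi\beta_i$ background singularity, i.e. $4\pi s+2\pi\beta_i$; and in (4), the $s'$ smooth $2$-level caps remove $4\pi s'$, giving $\Theta=4\pi(s-s')+2\pi\beta_i$. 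The main obstacle, and the step I expect to take the most care, is precisely this bookkeeping: correctly attributing the cone-angle contribution of the singular bubble and the cancellation coming from $2$-level caps, i.e. iterating the area identity Theorem \ref{theorem area convergence of bubble tree} through two levels of the tree and matching it against the neck formula Proposition \ref{prop non-positive on neck}. Everything else is a routine repetition of the arguments in Lemma \ref{lemma one.level} and Proposition \ref{pro Theta's value when uk converges}.
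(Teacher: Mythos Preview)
Your overall strategy is exactly the paper's: localize, organize the $1$-level bubbles by whether $0$ lies in the rescaled disk, apply Lemma \ref{lemma one.level} to each rescaled sequence, and then feed everything into the area identity Theorem \ref{theorem area convergence of bubble tree}. Two concrete errors, however, would make the argument as written fail.

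First, the claim that the presence of a singular bubble ``forces $\beta_i>1$'' is wrong. Lemma \ref{lemma one.level} requires the sequence itself to converge weakly (hypothesis (c)); in the collapsing case $u_k$ does not, so you cannot apply part (3) at the base level. What actually happens is that the distinguished $1$-level bubble $v^1$ containing the singularity exists for any $\beta_i>-1$; the constraint $\beta_i>1$ appears \emph{only} when $\{u_k^1\}$ (which \emph{does} converge to $v^1$) itself has bubbles, i.e.\ only in case (4), by applying Lemma \ref{lemma one.level}(3) to $u_k^1$. Case (3) carries no such restriction.

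Second, and more seriously, your curvature bookkeeping for the singular bubble is off. The bubble $v^1$ solves $-\Delta v^1=K(x_0)e^{2v^1}-2\pi\beta_i\delta_{y_\infty}$ on $\mathbb R^2$ with finite area; extending to $\mathbb S^2$ gives a constant-curvature metric with one visible conical singularity and one at infinity. By \cite{CW-L} a single conical point is impossible, and by \cite{Troyanov1988} the two angles must be equal, so $\res(v^1,\infty)=\beta_i$ and hence $\int_{\mathbb R^2}K(x_0)e^{2v^1}=4\pi(1+\beta_i)$, \emph{not} $4\pi+2\pi\beta_i$. That extra $2\pi\beta_i$ is exactly what makes case (3) come out to $(4\pi+4\pi\beta_i)+4\pi(s-1)-2\pi\beta_i=4\pi s+2\pi\beta_i$. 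In case (4) the same two-singularity principle applies, but now the visible residue of $v^1$ is $-\tau/2\pi$ with $\tau=4\pi s'-2\pi\beta_i$ (the $s'$ smooth caps have already concentrated there), so $\int K(x_0)e^{2v^1}=4\pi-2\tau$, and the area identity, summed over all $s+s'$ bubbles, gives $4\pi(s-1)+(4\pi-2\tau)+4\pi s'-2\pi\beta_i=4\pi(s-s')+2\pi\beta_i$. Your heuristic about a ``net $-4\pi s'$ shift'' points in the right direction but does not substitute for this computation; without the Troyanov football structure you cannot pin down the area of $v^1$, and the formula for $\Theta(x_0)$ would remain undetermined.
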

	
	\begin{proof}
		If $\{u_{k}\}$ has at least a bubble at some $x_{0} \in \mathbb{S}^{2}$, we choose an appropriate isothermal coordinate system with $x_{0}=0$. 
		
		Firstly, we show that there are only finitely many bubbles at 1-level.
		We assume $\{ (x_k^1,r_k^1) \}$, $\cdots$, $\{ (x_k^s,r_k^s)\}$ are arbitrary $s$ blowup sequences at 1-level (here we do not say there are only s blowup sequences at 1-level), then for any fixed $R>0$, 
		$$
		D_{Rr_k^i}(x_k^i)\cap D_{Rr_k^j}(x_k^j)=\emptyset,\s i\neq j,
		$$
		when $k$ is sufficiently large. Then we may assume for any $i \in \{2,\cdots,s\}$, 
		for any fixed $R>0$, $0 \notin D_{Rr_k^i}(x_k^i)$ when $k$ is sufficiently large. We set $$
		u_k^i=u_k(x_k^i+r_k^ix)+\log r_k^i,
		$$
		which converges to a bubble $v^i$, then for any $i \in \{2,\cdots,s\}$, for any fixed $R>0$, $u_k^i$
		satisfies the equation
		$$
		-\Delta u_k^i(x)=K_k(x_k^i+r_k^ix)e^{2u_k^i(x)},\s x\in D_R,
		$$
		when $k$ is sufficiently large. Applying Lemma \ref{lemma one.level} (3) to $u_k^i$, $\{ u^i_k \}$ has no bubble,  we conclude that  $u_k^i$
		has no concentration, hence $v^i$ is smooth. Therefore, there is at most one nonsmooth bubble at 1-level, so there are only finitely many bubbles at 1-level.
		
		Now we  may assume that 
		$\{ (x_k^1,r_k^1)\}$, $\cdots$, $\{ (x_k^s,r_k^s)\}$ are exactly all  blowup sequences at 1-level. We divide the argument into the following different cases. 
		
		{\bf Case 1:} $x_{0} \notin \{p_{1},\cdots,p_{m}\}$. For this case, $u_{k}$ solves
		\begin{align*}
		-\Delta u_{k}=K_{k}  e^{2u_{k}}. 
		\end{align*} 
		Then for any $i \in \{1,\cdots,s\}$ and any fixed $R$, $0 \notin D_{Rr_k^i}(x_k^i)$ when $k$ is large. So
		each $v^i$ is smooth and there are no bubbles at 2-level. 
		By Theorem \ref{theorem area convergence of bubble tree}, 
		\begin{align*}
		\Theta(x_{0})&=\lim_{r\rightarrow 0}\lim_{k\rightarrow+\infty}\K_{g_k}(D_r)\\
		&=K(x_{0})\lim_{r\rightarrow 0}\lim_{k\rightarrow+\infty}\Area(D_r)\\
		&=K(x_{0})\sum_{i=1}^{s} \Area(\R^2,g_{v^i})\\
		&= 4 \pi s.
		\end{align*}
		
		{\bf Case 2:} $x_{0} \in \{p_{1},\cdots,p_{m}\}$. 
		WLOG, we assume $x_{0}=p_{1}$. Then
		\begin{align*}
		-\Delta u_{k}=K_{k}  e^{2u_{k}}-2 \pi \beta_{1}^{k} \delta_{0}. 
		\end{align*} 
		
		{\bf Case 2.1:} 
		For any $i \in \{1,\cdots,s\}$ and any fixed $R>0$, $0 \notin D_{Rr_k^i}(x_k^i)$ when $k$ is large. Then
		each $v^i$ is smooth and no bubbles are at 2-level. By Theorem \ref{theorem area convergence of bubble tree}, 
		\begin{align*}
		\Theta(x_{0})&=\lim_{r\rightarrow 0}\lim_{k\rightarrow+\infty}\K_{g_k}(D_r)\\
		&= K(x_{0})\lim_{r\rightarrow 0}\lim_{k\rightarrow+\infty}\Area(D_r)-2\pi \beta_{1}\\
		&= K(x_{0})\sum_{i=1}^{s} \Area(\R^2,g_{v^i})-2\pi \beta_{1}\\
		&=4\pi s-2\pi \beta_{1}.
		\end{align*}
		
		\begin{center}
			\scalebox{0.7}{
				\begin{tikzpicture}
				

				\draw[line width=1.3pt] (1.9,0) circle (1pt);
				\draw[line width=0.7pt] (2,0) circle (2);
				\draw[line width=0.5pt] (1.3,.5) circle (0.3);
				\draw[line width=0.5pt] (2.3,-.65) circle (0.4);
				
				\draw (2.2,0) node { $0$};
				\draw (1.3,1.1) node {  $D_{Rr_k^1(x_k^1)}$};
				\draw (2.3,-1.3) node {  $D_{Rr_k^2(x_k^2)}$};

				\def \a{2};
				\draw[line width=1.3pt] (8.9,0.42-\a) circle (1pt);
				\draw[rotate around={30:(8,2.25-\a)},line width=0.7pt] (8,2.25-\a) ellipse (1 and 2);
				\draw[rotate around={-27:(9.5,1.9-\a)},line width=1pt,opacity=0.3] (9.5,1.9-\a) ellipse (1 and 1.6);

				\draw (8.9,0-\a) node { $0$};
				\draw (7.5,3-\a) node {  $({S}^2,g_{v^1})$};
				\draw (9.9,2.5-\a) node {  $({S}^2,g_{v^2})$};

				\end{tikzpicture}}
			
			{\Small{\bf Collapsing Case 1 and Case 2.1:}  All bubbles are smooth 
				and at 1-level}\\
		\end{center}
		
		{\bf Case 2.2:}
		There exists $i_{0}$ such that $0\in D_{Rr_k^{i_{0}}}(x_k^{i_{0}})$ for fixed $R$. WLOG, we assume $i_{0}=1$. Then for any $i \in \{2,\cdots,s\}$, 
		for any fixed $R>0$, we have $0 \notin D_{Rr_k^i}(x_k^i)$ when $k$ is large. Hence, $v^{i}$ is smooth for any $i \in \{2,\cdots,s\}$. Now we consider the bubble $v^{1}$. Set $y_k^1=\frac{-x_k^1}{r_k^1}$ and assume $y_k^1\rightarrow y_\infty$. Then 
		$$
		-\Delta u_k^1=K_k(x_k^1+r_k^1x)e^{2 u_k^1}-2\pi\beta_{1}^{k} \delta_{y_k}.
		$$
		By arguments similar to those for Proposition \ref{pro equation of u and v using Theta}, there exists $\tau$ such that	
		$$
		-\Delta v^1=K(x_{0})e^{2v^1}+\tau \delta_{y_\infty}.
		$$
		
		We further divide {\bf Case 2.2}  into two cases.
		
		{\bf Case 2.2.1:} $\{u_k^1\}$ has no  bubble, 
		i.e. no bubbles at 2-level. Then
		$$
		-\Delta v^1=K(x_{0})e^{2v^1}-2\pi\beta_{1}\delta_{y_\infty}.
		$$
		By  \cite{CW-L} and \cite{Troyanov1988},  as a metric over $\mathbb{S}^2$, $g_{v^1}$  has exactly 2 singularities and 
		\begin{align*}
		\int_{\mathbb{R}^{2}} K(x_{0}) e^{ 2v^{1}} =4 \pi +4 \pi\beta_{1}. 
		\end{align*}
		Then by Theorem \ref{theorem area convergence of bubble tree},
		\begin{align*}
		\Theta(x_{0})&=\lim_{r\rightarrow 0}\lim_{k\rightarrow+\infty}\K_{g_k}(D_r)\\
		&=
		K(x_{0})\lim_{r\rightarrow 0}\lim_{k\rightarrow+\infty}\Area(D_{r}, g_{k})-2\pi\beta_{1}\\
		&=
		K(x_{0})\sum_{i=1}^{s} \Area(\R^2,g_{v^i})-2\pi\beta_{1}\\
		&=(4 \pi +4 \pi \beta_{1}) +4\pi(s-1)-2\pi\beta_{1}=4\pi s +2 \pi \beta_{1}.
		\end{align*}
		
		\begin{center}
			\scalebox{0.7}{
				\begin{tikzpicture}
				

				\fill (1.9,0) circle (1.5pt);
				\draw[line width=0.7pt] (2,0) circle (2.3);
				\draw[line width=0.5pt] (1.1,.6) circle (0.3);
				\draw[line width=0.5pt] (2,-.2) circle (0.5);
				
				\draw (2.2,0) node { $0$};
				\draw (1.3,1.2) node {  $D_{Rr_k^2(x_k^2)}$};
				\draw (2.3,-1.1) node {  $D_{Rr_k^1(x_k^1)}$};

				\def \a{2};
				\draw[line width=1.3pt] (8.9,0.42-\a) circle (1pt);
				\draw[rotate around={-27:(9.5,1.9-\a)},line width=1pt,opacity=0.3] (9.5,1.9-\a) ellipse (1 and 1.6);
				\draw[line width=0.5pt] (8.9,0.42-\a) arc(-20:70:3);
				\draw[line width=0.5pt] (8.9,0.42-\a) arc(70:-20:-3);
				
				\draw (8.9,0-\a) node { $0$};
				\draw (7.8,3-\a) node {  $({S}^2,g_{v^1})$};
				\draw (9.8,2.5-\a) node {  $({S}^2,g_{v^2})$};
				
				\end{tikzpicture}}
			
			{\Small{\bf Collapsing Case 2.2.1:}  All bubbles are at 1-level; 
				only one of them is singular}\\
		\end{center}

		{\bf Case 2.2.2:} $\{ u_k^1 \}$ has bubbles.
		By Lemma \ref{lemma one.level} (2), $\{u_k^1\}$ can only have bubbles at 1-level
		(which are the bubbles of $\{u_k\}$ at 2-level) and all bubbles of $\{u_k^1\}$ are smooth. By Lemma \ref{lemma one.level} (3), $\beta_{1} > 1$. 
		Similar to the arguments in Proposition \ref{pro Theta's value when uk converges}, 
		$$
		\tau=4 \pi s'-2 \pi \beta_{1} ,
		$$
		where $s'$ is the number of the bubbles of $\{ u_k^1 \}$.  
		By  \cite{CW-L} and \cite{Troyanov1988} again, 
		\begin{align*}
		\int_{\mathbb{R}^{2}} K(x_{0}) e^{ 2v^{1}} =4 \pi -2 \tau. 
		\end{align*}
		
		Let  $\{ (x_k^{s+1},r_k^{s+1}) \}$, $\cdots$, $\{ (x_k^{s+s'},r_k^{s+s'}) \}$ be all of the blowup sequences right on the top of $\{ (x_k^1,r_k^1)\}$. Then for any $i \in \{ s+1, \cdots, s+s^{\prime} \}$, $\{ (x_k^{i},r_k^{i}) \}$ converges to a smooth bubble and  $\{ (x_k^i,r_k^i)\}$ has no bubbles.
		By Theorem \ref{theorem area convergence of bubble tree}, we have
		\begin{align*}
		\Theta(x_{0})&=\lim_{r\rightarrow 0}\lim_{k\rightarrow+\infty}\K_{g_k}(D_r)\\
		&=
		K(x_{0})\lim_{r\rightarrow 0}\lim_{k\rightarrow+\infty} \Area(D_r,g_k)-2\pi\beta_{1}\\
		&=
		K(x_{0})\sum_{i=1}^{s+s^{\prime}} \Area(\R^2,g_{v^i})-2\pi\beta_{1}\\
		&=4 \pi (s-1)+(4 \pi -2 \tau)+4\pi s^{\prime}-2\pi\beta_{1}\\
		&=4\pi(s-s')+2 \pi \beta_{1}.
		\end{align*}
		
		\begin{center}
			\scalebox{0.7}{
				\begin{tikzpicture}
				
				
				\draw[line width=1.3pt] (2.1,0.2) circle (1pt);
				\draw[line width=0.7pt] (2,0.3) circle (2.1);
				\draw[line width=0.5pt] (1.7,0.8) circle (1.1);
				\draw[line width=0.5pt] (3,-0.8) circle (0.4);
				\draw[line width=0.5pt] (2,0.9) circle (0.3);
				\draw[line width=0.5pt] (1.6,0.2) circle (0.2);
				\draw[line width=0.5pt] (1,1.2) circle (0.2);
				
				\draw (2.35,0.2) node { $0$};
				\draw (1,1.2) node {  $1$};
				\draw (3,-0.8) node {  $2$};
				\draw (2,0.9) node {  $3$};
				\draw (1.6,0.2) node {  $4$};
				
				\draw (-2,1) node {  $1: D_{Rr_k^1(x_k^1)}$};
				\draw (-2, 0.5) node {  $2: D_{Rr_k^2(x_k^2)}$};
				\draw (-2,0) node {  $3: D_{Rr_k^3(x_k^3)}$};
				\draw (-2,-.5) node {  $4: D_{Rr_k^4(x_k^4)}$};
				
				\def \a{2};
				\draw[line width=1.3pt] (7.13,4.25-\a) circle (1pt);
				\draw[rotate around={-27:(9.5,1.9-\a)},line width=1pt,opacity=0.4] (9.5,1.9-\a) ellipse (1 and 1.6);
				\draw[line width=0.8pt] (8.9,0.42-\a) arc(-20:70:3);
				\draw[line width=0.8pt] (8.9,0.42-\a) arc(70:-20:-3);

				\draw[rotate around={19:(6.8,3.4)},line width=0.6pt] (6.8,3.4) ellipse (0.7 and 1.2);
				\draw[rotate around={-27:(7.5,3.1)},line width=1pt,opacity=0.2] (7.5,3.1) ellipse (0.6 and 0.9);
				
				\draw (7.2,4.5-\a) node { $0$};
				\draw (7.8,3-\a) node {  $({S}^2,g_{v^1})$};
				\draw (9.8,2.5-\a) node {  $({S}^2,g_{v^2})$};
				\draw (5.2,4) node {  $({S}^2,g_{v^3})$};
				\draw (9,3.1) node {  $({S}^2,g_{v^4})$};
				
				\end{tikzpicture}}

			{\Small{\bf Collapsing Case 2.2.2:} \\ There are two levels;
				the only singular bubble is at 1-level;\\
				all bubbles at 2-level are right on the top of the singular bubble.}\\
		\end{center}
		

		\begin{center}
			\scalebox{0.7}{
				\begin{tikzpicture}[rotate=-90]
				
				\begin{scope}[shift={(-1,-0.3)}]
				\draw[rotate around={19:(6.8,3.4)},line width=0.6pt] (6.8,3.4) ellipse (0.7 and 1.2);
				\draw[rotate around={-27:(7.5,3.1)},line width=1pt,opacity=0.2] (7.5,3.1) ellipse (0.6 and 0.9);
				\end{scope}
				
				\begin{scope}[shift={(10.5, 1.15)}, rotate around={-170:(1,1)},scale=0.7]
				\draw[rotate around={-27:(9.5,1.9)},line width=1pt,opacity=0.4] (9.5,1.9) ellipse (1 and 1.6);
				\draw[line width=0.8pt] (8.9,0.42) arc(-20:70:3);
				\draw[line width=0.8pt] (8.9,0.42) arc(70:-20:-3);

				\draw[rotate around={19:(6.8,5.4)},line width=0.6pt] (6.8,5.4) ellipse (0.7 and 1.2);
				\draw[rotate around={-27:(7.5,5.1)},line width=1pt,opacity=0.2] (7.5,5.1) ellipse (0.6 and 0.9);
				\end{scope}
				
				\fill (6.26,1.93) circle (2.3pt);
				\end{tikzpicture}}
			
			{\Small{\bf Global figure of Collapsing Case:} \\
				There are at most two levels.  The first level consists of smooth spheres and a sphere with two singular points. The South Poles of the smooth spheres  and one of the singular points of the nonsmooth sphere are attached to a point (the limit of  $g_{k}$). The second level consists of smooth spheres with their South Poles attached to the other singular point of a nonsmooth sphere.}
		\end{center}
	\end{proof}
	
	Theorem \ref{theorem metric convergence on S2} now follows from  combining Proposition \ref{pro theta's value when no bubble}, Proposition \ref{pro equation of u and v using Theta}, Proposition \ref{pro Theta's value when uk converges} and Proposition \ref{pro v's equation when uk not converge}.

	Under the assumptions of Proposition \ref{pro v's equation when uk not converge}, if we further assume $\{u_{k}\}$ has only bubbles at 1-level, we will obtain a numerical relation between the number of the bubbles and a linear combination of $\beta$'s components. Furthermore, both the number of singular bubbles and that of smooth bubbles can be controlled by $\chi(\mathbb{S}^{2},\beta)$. 
	
	\begin{pro}
		\label{pro uk not converge and one level}
		Let $g_{k}=e^{2u_{k}} g_{\mathbb{S}^{2}} \in \mathcal{M}(\mathbb{S}^{2})$ where $u_{k}$ are solutions of \eqref{eq.positive on S2} satisfying \textnormal{(A1), (A2), (A3)}. We assume that $u_{k} -c_{k}\rightarrow v$ weakly in $\cap_{p\in{(1,2)}}W^{1,p}(\mathbb{S}^{2},g_{\mathbb{S}^{2}})$ with $c_{k} \rightarrow -\infty$ and $\{u_{k}\}$ only have bubbles at 1-level. If $\{u_{k}\}$ has singular bubbles at $p_{1}, \cdots, p_{j_{0}}$ and $\{u_{k}\}$ has $t$ smooth bubbles, then $\{u_{k}\}$ has 
		$$
		s=t+j_{0}= \frac{1}{2} \chi(\mathbb{S}^{2}, \beta)-\sum_{i=1}^{j_{0}} \beta_{i}
		$$
		bubbles. Moreover, 
		\begin{align*}
		\max_{1\leq i\leq j_{0}} \beta_i \leq \frac{1}{2} \chi(\mathbb{S}^{2},\beta)-1, \quad t \leq \frac{1}{2} \chi(\mathbb{S}^{2},\beta). 
		\end{align*}
	\end{pro}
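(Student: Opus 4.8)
\medskip

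\noindent\textbf{Proof proposal.} The plan is to read off the asserted identity by integrating the limit equation for $v$ and matching it, point by point, against the explicit values of $\Theta$ already computed in the previous propositions. Since $c_k\to-\infty$, Proposition~\ref{pro equation of u and v using Theta}(2) gives $-\Delta_{\mathbb{S}^{2}}v=\sum_{x\in\mathbb{S}^{2}}\Theta(x)\delta_x-1$, and the sum is finite because $\mathscr C\cup\{p_1,\dots,p_m\}$ is finite. Integrating over $(\mathbb{S}^{2},g_{\mathbb{S}^{2}})$, whose area is $4\pi$, yields the single global constraint
\[
\sum_{x\in\mathbb{S}^{2}}\Theta(x)=4\pi .
\]
Everything else is bookkeeping of the left-hand side.

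Next I would classify the points of $\mathscr C$. By hypothesis all bubbles are at $1$-level, so Case~(4) of Proposition~\ref{pro v's equation when uk not converge} is excluded; hence every point carrying a bubble is a Case~(1), (2) or (3) point, and a point $p_i$ carries a (necessarily unique) singular bubble exactly when it is a Case~(3) point. Relabel so that $p_1,\dots,p_{j_0}$ are these Case~(3) points, and let $s_i$ ($1\le i\le j_0$) be the total number of bubbles at $p_i$, of which one is singular and $s_i-1$ are smooth. For $i>j_0$ let $s_i\ge 0$ count the (smooth) bubbles at $p_i$ (the value $s_i=0$ allowed), and let $q_1,\dots,q_\ell\notin\{p_1,\dots,p_m\}$ be the remaining bubble points, with $\sigma_j\ge 1$ smooth bubbles each. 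Using Proposition~\ref{pro v's equation when uk not converge}(3),(2),(1) for the bubble points and Proposition~\ref{pro theta's value when no bubble} for the bubble‑free $p_i$,
\[
\Theta(p_i)=4\pi s_i+2\pi\beta_i\ \ (i\le j_0),\qquad
\Theta(p_i)=4\pi s_i-2\pi\beta_i\ \ (i>j_0),\qquad
\Theta(q_j)=4\pi\sigma_j ,
\]
and $\Theta$ vanishes off these points. Writing $s=\sum_{i=1}^{m}s_i+\sum_{j=1}^{\ell}\sigma_j$ for the total number of bubbles and using $\sum_{i=1}^m\beta_i=\chi(\mathbb{S}^{2},\beta)-2$, the constraint $\sum_x\Theta(x)=4\pi$ becomes $4\pi=4\pi s+4\pi\sum_{i=1}^{j_0}\beta_i-2\pi\chi(\mathbb{S}^{2},\beta)+4\pi$, i.e. $s=\tfrac12\chi(\mathbb{S}^{2},\beta)-\sum_{i=1}^{j_0}\beta_i$. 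Since each Case~(3) point contributes exactly one singular bubble on top of its smooth ones, $s=t+j_0$, giving the displayed formula.

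Finally I would extract the two inequalities. From $s=t+j_0$ we get $t=\tfrac12\chi(\mathbb{S}^{2},\beta)-\sum_{i=1}^{j_0}(\beta_i+1)$, and since $\beta_i>-1$ the subtracted sum is nonnegative, so $t\le\tfrac12\chi(\mathbb{S}^{2},\beta)$. For the other bound, fix $i_0\in\{1,\dots,j_0\}$; every singular bubble is counted in $s$, so $s\ge j_0$, and hence
\begin{align*}
\beta_{i_0}
&=\tfrac12\chi(\mathbb{S}^{2},\beta)-s-\sum_{\substack{1\le i\le j_0\\ i\ne i_0}}\beta_i
\;\le\; \tfrac12\chi(\mathbb{S}^{2},\beta)-j_0-\sum_{\substack{1\le i\le j_0\\ i\ne i_0}}\beta_i\\
&=\tfrac12\chi(\mathbb{S}^{2},\beta)-1-\sum_{\substack{1\le i\le j_0\\ i\ne i_0}}(\beta_i+1)
\;\le\;\tfrac12\chi(\mathbb{S}^{2},\beta)-1 ,
\end{align*}
again using $\beta_i>-1$. (When $j_0=0$ both statements are trivial.)

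The arithmetic above is routine; the substantive content — the exact value of $\Theta$ at each bubble point and the trichotomy of admissible bubble configurations in the collapsing case — is precisely Proposition~\ref{pro v's equation when uk not converge}. The one place to be careful is verifying that the list of points with $\Theta\ne 0$ is complete and pairwise disjoint, and in particular that the ``only $1$-level'' hypothesis is exactly what rules out Case~(4): there $\Theta(p_i)=4\pi(s-s')+2\pi\beta_i$ involves the $2$-level count $s'$, which would break the clean identity $\sum_i s_i+\sum_j\sigma_j=s$. One should also keep straight that in Case~(3) the count $s_i$ already includes the singular bubble, so the correction $2\pi\beta_i$ is attached to the full bubble count at $p_i$ rather than only to its smooth part.
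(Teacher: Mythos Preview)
Your proof is correct and follows essentially the same route as the paper: you integrate the limit equation for $v$ (equivalently, use $\sum_x\Theta(x)=4\pi$), plug in the explicit values of $\Theta$ from Propositions~\ref{pro theta's value when no bubble} and~\ref{pro v's equation when uk not converge} after noting that the one-level hypothesis rules out Case~(4), and read off the identity and the two inequalities using $\beta_i>-1$. The paper's organization of the bookkeeping and the extraction of the inequalities are slightly different in form but identical in content.
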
	
	
	\begin{proof}
		By our assumptions, \textbf{Case 2.2.2} in the proof of Proposition \ref{pro v's equation when uk not converge} will not occur. Then we may assume that 
		$\{ u_k \}$ has $t_i$ smooth bubbles and one singular bubble at $p_{i}$ for $i=1,\cdots,j_{0}$, $t_{i}$ smooth bubbles at $p_{i}$ for $i=j_{0}+1,\cdots,j_{1}$  and $t_{i}^{\prime}$ smooth bubbles at $q_{i}$ for $i=1, \cdots, j_{2}$ where $q_{i} \notin \{p_{1}, \cdots,p_{m}\}$. Then $\{u_{k}\}$ has 
		\begin{align*}
		s=\sum_{i=1}^{ j_{0} }(t_{i}+1   )+\sum_{i=j_{0}+1}^{j_{1}} t_{i}+\sum_{i=1}^{j_{2}} t_{i}^{\prime}=j_{0}+t
		\end{align*}
		bubbles. 
		By Proposition \ref{pro equation of u and v using Theta} (2)  and Proposition \ref{pro v's equation when uk not converge}, 
		\begin{align*}
		4 \pi &=\sum_{i=1}^{j_{0}} \big( 4\pi +2 \pi \beta_{i}+4 \pi t_{i} \big)+\sum_{i=j_{0}+1}^{j_{1}} \big(4 \pi t_{i} -2 \pi\beta_{i} \big)+\sum_{i=j_{1}+1}^{m}( -2\pi \beta_{i}  )+\sum_{i=1}^{j_{2}} 4 \pi t_{i}^{\prime} \\
		&=\sum_{i=1}^{j_{0}} (4 \pi +4 \pi \beta_{i})+4 \pi t -2 \pi \sum_{i=1}^{m} \beta_{i},
		\end{align*}
		which yields that
		\begin{align*}
		2 \pi \chi(\mathbb{S}^{2},\beta)=4 \pi + 2 \pi \sum_{i=1}^{m} \beta_{i} =4 \pi(j_{0}+t)+4 \pi \sum_{i=1}^{j_{0}} \beta_{i}. 
		\end{align*}
		Since $\beta_{i}>-1$, 
		\begin{align*}
		4 \pi + 2 \pi \sum_{i=1}^{m} \beta_{i} &= 4 \pi \sum_{i=1}^{j_{0}} (1+\beta_{i} )+4 \pi t
		\\
		&\geq \max\{  4 \pi + 4 \pi \beta_{1}, \cdots, 4 \pi + 4 \pi \beta_{j_{0}}, 4 \pi t \}, 
		\end{align*}
		which yields that 
		\begin{align*}
		\max_{1\leq i\leq j_{0}} \beta_i \leq \frac{1}{2} \sum_{i=1}^{m} \beta_{i}=\frac{1}{2}\chi(\mathbb{S}^{2},\beta)-1, \quad t \leq 1+\frac{1}{2} \sum_{i=1}^{m} \beta_{i}=\frac{1}{2}\chi(\mathbb{S}^{2},\beta). 
		\end{align*}
	\end{proof}
	
	With additional assumptions on $\beta$, the assumptions of Proposition \ref{pro uk not converge and one level} are satisfied and the following corollaries are obtained.

	\begin{cor}
		\label{cor equation of v if chi<2}
		Let $g_{k}=e^{2u_{k}} g_{\mathbb{S}^{2}} \in \mathcal{M}(\mathbb{S}^{2})$ where $u_{k}$ are the solutions of \eqref{eq.positive on S2} satisfying \textnormal{(A1), (A2), (A3)}. 
		Suppose that $\chi(\mathbb{S}^{2},\beta)=2+\sum_{i=1}^{m} \beta_{i} \in (0,2)$ and $\{u_{k}\}$ has at least one bubble. Then \\
		\textnormal{(1)} $u_{k} -c_{k}\rightarrow v$ weakly in $\cap_{p\in{(1,2)}}W^{1,p}(\mathbb{S}^{2},g_{\mathbb{S}^{2}})$ with $c_{k} \rightarrow -\infty$. \\
		\textnormal{(2)}  There exist $s<m$ and 
		$1 \leq i_{1}<i_{2}<\cdots<i_{s} \leq m$ with $\beta_{i_{j}}<0$ such that
		$\{u_{k}\}$ has exactly one singular bubble at $p_{i_{j}}$. Moreover, 
		\begin{align*}
		4 \pi s+4 \pi \sum_{j=1}^{s} \beta_{i_{j}}=2 \pi \sum_{i=1}^{m} \beta_{i}+4 \pi .
		\end{align*}
	\end{cor}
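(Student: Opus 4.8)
The plan is to combine the convergence trichotomy of Theorem~\ref{theorem metric convergence on S2} with the bubble-type list of Proposition~\ref{pro v's equation when uk not converge}, and to pin down the configuration through a single total-curvature balance. For part~(1): since $c_k$ is bounded from above, after passing to a subsequence we are in alternative (a) or (b) of Theorem~\ref{theorem metric convergence on S2}. In alternative (a) the number $s$ of bubbles satisfies $s\le\frac12\chi(\mathbb S^2,\beta)<1$, so $s=0$, contradicting the hypothesis that $\{u_k\}$ has at least one bubble. Hence alternative (b) occurs, i.e. $c_k\to-\infty$ and $u_k-c_k\to v$ weakly, which is~(1); by Proposition~\ref{pro equation of u and v using Theta}(2) one has $-\Delta_{\mathbb S^2}v=\sum_{x\in\mathbb S^2}\Theta(x)\delta_x-1$.

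Integrating this equation over $\mathbb S^2$ gives $\sum_{x\in\mathbb S^2}\Theta(x)=4\pi$. Write $\Theta_0$ for the value predicted at a point with no bubble by Proposition~\ref{pro theta's value when no bubble}, namely $\Theta_0(p_i)=-2\pi\beta_i$ and $\Theta_0(x)=0$ otherwise; using $\sum_i\Theta_0(p_i)=4\pi-2\pi\chi(\mathbb S^2,\beta)$ together with $\Theta=\Theta_0$ away from the bubble points, the balance rearranges to
\begin{align*}
\sum_{x\in\mathscr C}\bigl(\Theta(x)-\Theta_0(x)\bigr)=2\pi\chi(\mathbb S^2,\beta)<4\pi .
\end{align*}
Next I would estimate the excess $\Theta(x)-\Theta_0(x)$ at each bubble point according to the four cases of Proposition~\ref{pro v's equation when uk not converge}: in cases (1) and (2) it equals $4\pi s\ge4\pi$; in case (4), since the singular $1$-level bubble carries the \emph{positive} curvature integral $4\pi-2\tau$ with $\tau=4\pi s'-2\pi\beta_i$, one gets $\beta_i>2s'-1$, hence the excess $4\pi(s-s'+\beta_i)>4\pi(s+s'-1)\ge4\pi$ (using $s\ge1$, $s'\ge1$); in case (3) the excess is $4\pi(s+\beta_i)$, and the positive curvature integral $4\pi+4\pi\beta_i$ of the two-cone bubble (by the classifications of Chen--Li \cite{CW-L} and Troyanov \cite{Troyanov1988}) forces $\beta_i>-1$, so this excess is positive and is $>4\pi$ whenever $s\ge2$.

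Because every excess is positive while their total is $<4\pi$, no bubble point of type (1), (2) or (4) can occur, and every type-(3) point has $s=1$ (so it carries a single bubble, which is singular and at $1$-level); such a point, having excess $4\pi(1+\beta_i)<4\pi$, must satisfy $\beta_i<0$. Thus the bubble points are precisely $p_{i_1},\dots,p_{i_s}$ for some $1\le i_1<\dots<i_s\le m$ with each $\beta_{i_j}<0$, each carrying exactly one singular bubble, and $s\ge1$ since $\{u_k\}$ has a bubble; the balance now reads $\sum_{j=1}^s4\pi(1+\beta_{i_j})=2\pi\chi(\mathbb S^2,\beta)=4\pi+2\pi\sum_{i=1}^m\beta_i$, which is the stated identity. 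Finally $s\le m$ because the $i_j$ are distinct, and $s=m$ would force $4\pi m+2\pi\sum_i\beta_i=4\pi$, i.e. $\chi(\mathbb S^2,\beta)=4-2m$, incompatible with $0<\chi(\mathbb S^2,\beta)<2$; hence $s<m$. (Once types (1), (2), (4) are excluded, one may equally quote Proposition~\ref{pro uk not converge and one level} for the numerical relation.)

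I expect the four-case bookkeeping in the middle step to be the main obstacle: one must use the exact curvature integrals of the limiting bubble metrics $v^1$ — obtained via the Chen--Li and Troyanov classifications — to convert positivity of areas into the strict inequalities above, and must be careful to pair each bubble point with the correct reference value $\Theta_0$. The remaining steps are essentially formal once Theorem~\ref{theorem metric convergence on S2} and Proposition~\ref{pro v's equation when uk not converge} are available.
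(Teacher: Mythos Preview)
Your proof is correct and follows essentially the same route as the paper. Your ``excess'' $\Theta(x)-\Theta_0(x)$ is exactly the curvature concentration $\lim_{r\to0}\lim_{k\to\infty}\int_{B_r(x)}K_ke^{2u_k}$, so your total-excess bound $<4\pi$ is literally the paper's observation that the total curvature $2\pi\chi(\mathbb S^2,\beta)<4\pi$ forbids any smooth bubble (each smooth bubble carrying curvature $4\pi$); the paper states this in one line and then notes that only Case~2.2.1 with $s=1$ survives, whereas you recover the same conclusion by running through the four cases of Proposition~\ref{pro v's equation when uk not converge} and bounding each excess. Your derivation of the identity and of $s<m$ is identical to the paper's, and your alternative route to part~(1) via the bubble bound $s\le\tfrac12\chi(\mathbb S^2,\beta)<1$ in Theorem~\ref{theorem metric convergence on S2}(a) is a clean substitute for the paper's appeal to Proposition~\ref{pro Theta's value when uk converges}.
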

	\begin{proof}
		Since $\chi(\mathbb{S}^{2},\beta) \in (0,2)$, then
		\begin{align*}
		\lim_{k \rightarrow \infty} \int_{\mathbb{S}^{2}} K_{k} dV_{g_{k}}=\lim_{ k \rightarrow \infty} (4 \pi +2 \pi \sum_{i=1}^{m} \beta_{i}^{k})=4 \pi +2 \pi \sum_{i=1}^{m} \beta_{i}<4 \pi, 
		\end{align*}
		which implies that $\{u_{k}\}$ cannot have smooth bubbles. Then by Proposition \ref{pro Theta's value when uk converges}, $u_{k} -c_{k}\rightarrow v$ weakly in $\cap_{p\in{(1,2)}}W^{1,p}(\mathbb{S}^{2},g_{\mathbb{S}^{2}})$ with $c_{k} \rightarrow -\infty$, and by Proposition \ref{pro v's equation when uk not converge}, \textbf{Case 1}, \textbf{Case 2.1} and \textbf{Case 2.2.2} in the proof of Proposition \ref{pro v's equation when uk not converge} cannot happen. Therefore, only  \textbf{Case 2.2.1} can happen with $s=1$ and $\beta_{i}<0$, which means that $\{u_{k}\}$ has exactly one singular bubble at 1-level at some $p_{i}$ with $\beta_{i}<0$. By Proposition \ref{pro equation of u and v using Theta} (2), there exist $s \leq m$ and 
		$1 \leq i_{1}<i_{2}<\cdots<i_{s} \leq m$ with $\beta_{i_{j}}<0$ such that
		\begin{align*}
		-\Delta_{\mathbb{S}^{2}} v=\sum_{j=1}^{s} ( 4 \pi+4 \pi \beta_{i_{j}} ) \delta_{p_{i_{j}}}-2 \pi \sum_{i=1}^{m} \beta_{i} \delta_{p_{i}}-1,
		\end{align*}
		which yields 
		\begin{align*}
		4 \pi s+4 \pi \sum_{j=1}^{s} \beta_{i_{j}}=2 \pi \sum_{i=1}^{m} \beta_{i}+4 \pi .
		\end{align*}
		What left is to show $s<m$. If $s=m$, 
		\begin{align*}
		4 m \pi +4 \pi \sum_{i=1}^{m} \beta_{i}=2 \pi \sum_{i=1}^{m} \beta_{i}+4 \pi,
		\end{align*}
		in turn 
		\begin{align*}
		2-2m=\sum_{i=1}^{m} \beta_{i} \in (-2,0).
		\end{align*}
		This is impossible as $m$ is an integer. 
	\end{proof}
	
	\begin{cor}
		\label{cor equation of v if betai leq 1}
		Let $g_{k}=e^{2u_{k}} g_{\mathbb{S}^{2}} \in \mathcal{M}(\mathbb{S}^{2})$ where $u_{k}$ are the solution of \eqref{eq.positive on S2} satisfying \textnormal{(A1), (A2), (A3)}. Assume $\beta_{i} \leq 1$ for any $i \in \{1,\cdots,m\} $ and $\{u_{k}\}$ has at least one bubble. Then \\
		\textnormal{(1)} $u_{k} -c_{k}\rightarrow v$ weakly in $\cap_{p\in{(1,2)}}W^{1,p}(\mathbb{S}^{2},g_{\mathbb{S}^{2}})$ with $c_{k} \rightarrow -\infty$. \\
		\textnormal{(2)} 
		All bubbles of $\{ u_{k}\}$ are at $1$-level. Further, there exists a set $I \subset  \{1,\cdots,m\}$ such that
		\begin{align*}
		4 \pi s+4 \pi \sum_{i \in I} \beta_{i }=2 \pi \sum_{i=1}^{m} \beta_{i}+4 \pi, 
		\end{align*}
		where $s$ is the number of bubbles of $\{u_{k}\}$. 
	\end{cor}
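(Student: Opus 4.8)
The plan is to reduce the statement to the structural results already in hand---Proposition \ref{pro Theta's value when uk converges}, Proposition \ref{pro v's equation when uk not converge} and Proposition \ref{pro uk not converge and one level}---and to use the hypothesis $\beta_i\le 1$ to eliminate the configurations involving a second level. For part (1) I would argue by contradiction: by the trichotomy recorded just before these propositions, if $c_k$ does not tend to $-\infty$ then a subsequence has $c_k$ bounded, hence $u_k\to u$ weakly in $\cap_{p\in(1,2)}W^{1,p}(\mathbb{S}^2,g_{\mathbb{S}^2})$. Since $\{u_k\}$ has at least one bubble, say at $x_0$, Proposition \ref{pro Theta's value when uk converges} forces $x_0=p_i$ with $\beta_i>1$, contradicting $\beta_i\le 1$. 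Therefore $c_k\to-\infty$ and $u_k-c_k\to v$ weakly, which is (1).

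For part (2) we are then in the setting of Proposition \ref{pro v's equation when uk not converge}, so at every concentration point $x_0$ the bubble tree falls under one of its four cases. The only case producing a bubble at $2$-level is case (4) (labelled Case 2.2.2 in that proof), and the argument there applies Lemma \ref{lemma one.level}(3) to the rescaled sequence $\{u_k^1\}$, which forces $\beta_i>1$ at the base singular point $p_i$. Under $\beta_i\le 1$ this is impossible, so case (4) never occurs and every bubble of $\{u_k\}$ sits at $1$-level. In particular $\{u_k\}$ has only bubbles at $1$-level, so Proposition \ref{pro uk not converge and one level} applies. Letting $I\subset\{1,\dots,m\}$ be the set of indices at which $\{u_k\}$ develops a (then necessarily unique) singular bubble and $t$ the number of smooth bubbles, Proposition \ref{pro uk not converge and one level} gives $s=t+|I|=\frac{1}{2}\chi(\mathbb{S}^2,\beta)-\sum_{i\in I}\beta_i$. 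Multiplying by $4\pi$ and substituting $\chi(\mathbb{S}^2,\beta)=2+\sum_{i=1}^m\beta_i$ yields $4\pi s+4\pi\sum_{i\in I}\beta_i=4\pi+2\pi\sum_{i=1}^m\beta_i$, the claimed identity.

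The substantial analytic content has already been carried out in Lemma \ref{lemma one.level} and in Proposition \ref{pro v's equation when uk not converge} and Proposition \ref{pro uk not converge and one level}; the only point requiring care here is verifying that case (4)/Case 2.2.2 is genuinely the sole mechanism creating a $2$-level bubble and that its exclusion under $\beta_i\le 1$ is a consequence of Lemma \ref{lemma one.level}(3) rather than of some additional assumption. Once this is confirmed, (1) and (2) follow as above.
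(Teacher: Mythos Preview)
Your proposal is correct and follows essentially the same route as the paper: use Proposition \ref{pro Theta's value when uk converges} together with $\beta_i\le 1$ to rule out the convergent case, then observe that Case 2.2.2 of Proposition \ref{pro v's equation when uk not converge} is excluded (since it requires $\beta_i>1$ via Lemma \ref{lemma one.level}(3)), and finally invoke Proposition \ref{pro uk not converge and one level}. The paper's proof is terser but identical in substance.
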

	\begin{proof}
		Since $\{u_{k}\}$ has at least one bubble and $\beta_{i} \leq 1$, then by Proposition \ref{pro Theta's value when uk converges} $u_{k} -c_{k}\rightarrow v$ weakly in $\cap_{p\in{(1,2)}}W^{1,p}(\mathbb{S}^{2},g_{\mathbb{S}^{2}})$ with $c_{k} \rightarrow -\infty$. By Proposition \ref{pro v's equation when uk not converge}, \textbf{Case 2.2.2} (in the proof) cannot happen, 
		hence the assertion follows from Proposition \ref{pro uk not converge and one level} immediately. \end{proof}

	\subsection{Simplification of bubble-tree convergence when $K_k\to K$ in $C^1$} In this subsection, we will use the Pohozaev inequality to show that if $\{ u_{k} \}$ has at least one bubble and $K_k$ converges in $C^1$ then $c_k\rightarrow-\infty$
	and $u_k$ has only bubbles at 1-level.

	\begin{pro}[Pohozaev identity on an annulus]
		\label{prop Pohozaev identity}
		Assume $K \in C^{1}(D\backslash \overline{D_\delta})$ and 
		\begin{align*}
		-\Delta u = Ke^{2u} \text{ on } D\backslash \overline{D_\delta}.
		\end{align*}
		Define a function
		$$
		P(t) = t\int_{\partial D_{t}}\Big(\Big|\frac{\partial u}{\partial r}\Big|^{2}-\frac{1}{r^{2}}\Big|\frac{\partial u}{\partial \theta}\Big|^{2}\Big)+2 \int_{ \partial D_{t} } \frac{\partial u}{\partial r}+ t \int_{\partial D_t} Ke^{ 2u }.
		$$
		Then for any $\delta<s<t<1$, we have
		$$
		P(t) - P(s) =  \int_{D_{t} \backslash D_{s}} re^{2u} \frac{\partial K}{\partial r}.
		$$
	\end{pro}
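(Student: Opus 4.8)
The plan is to obtain the identity by the standard Rellich--Pohozaev device: test the equation $-\Delta u = Ke^{2u}$ on the annulus $\Omega:=D_t\setminus D_s$ against the radial multiplier $x\cdot\nabla u = r\,\partial_r u$ and integrate by parts. I would first record that, since $K\in C^1$ on the closed annulus and $-\Delta u = Ke^{2u}$ holds there, elliptic bootstrapping makes $u$ a classical $C^2$ solution on a neighbourhood of $\overline\Omega$, so every integration by parts and every boundary evaluation on the circles $\partial D_\rho$ (with arc-length measure $ds = \rho\,d\theta$) below is legitimate.

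For the left-hand side, integrating by parts once gives $\int_\Omega(-\Delta u)(x\cdot\nabla u) = \int_\Omega \nabla u\cdot\nabla(x\cdot\nabla u) - \int_{\partial\Omega}\tfrac{\partial u}{\partial n}(x\cdot\nabla u)$, and the pointwise identity $\nabla u\cdot\nabla(x\cdot\nabla u) = |\nabla u|^2 + \tfrac12\,x\cdot\nabla(|\nabla u|^2)$ together with $\operatorname{div}x = 2$ makes the interior terms cancel, leaving the purely boundary expression
\[
\int_\Omega(-\Delta u)(x\cdot\nabla u) = \tfrac12\int_{\partial\Omega}(x\cdot n)\,|\nabla u|^2 - \int_{\partial\Omega}\tfrac{\partial u}{\partial n}\,(x\cdot\nabla u).
\]
On $\partial D_\rho$ one has $x\cdot n = \pm\rho$, $\partial u/\partial n = \pm\partial_r u$, $x\cdot\nabla u = \rho\,\partial_r u$ and $|\nabla u|^2 = |\partial_r u|^2 + \rho^{-2}|\partial_\theta u|^2$, so this expression reduces to $-\tfrac{t}{2}\int_{\partial D_t}\big(|\partial_r u|^2 - r^{-2}|\partial_\theta u|^2\big) + \tfrac{s}{2}\int_{\partial D_s}\big(|\partial_r u|^2 - r^{-2}|\partial_\theta u|^2\big)$, i.e. exactly the quadratic part of $-\tfrac12\big(P(t)-P(s)\big)$.

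For the right-hand side I would use $e^{2u}(x\cdot\nabla u) = \tfrac12\,x\cdot\nabla(e^{2u})$ and integrate by parts with $\operatorname{div}(Kx) = x\cdot\nabla K + 2K$, obtaining
\[
\int_\Omega Ke^{2u}(x\cdot\nabla u) = \tfrac12\int_{\partial\Omega}(x\cdot n)\,Ke^{2u} - \tfrac12\int_\Omega (x\cdot\nabla K)\,e^{2u} - \int_\Omega Ke^{2u}.
\]
The leftover bulk term $\int_\Omega Ke^{2u}$ is then eliminated by applying the divergence theorem to the equation itself: $\int_\Omega Ke^{2u} = \int_\Omega(-\Delta u) = -\int_{\partial D_t}\partial_r u + \int_{\partial D_s}\partial_r u$ --- this is precisely the source of the linear term $2\int_{\partial D_t}\partial_r u$ appearing in $P$. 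Equating the two computations, regrouping the boundary integrals over $\partial D_t$ and over $\partial D_s$, and multiplying by $-2$, the $\partial D_t$-integrals assemble into $P(t)$, the $\partial D_s$-integrals into $-P(s)$, and the only surviving volume term is $\int_\Omega(x\cdot\nabla K)e^{2u} = \int_{D_t\setminus D_s} r\,\tfrac{\partial K}{\partial r}\,e^{2u}$, which is the asserted identity.

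The argument is almost entirely bookkeeping and I do not anticipate any conceptual difficulty. The one place that needs attention --- and which I would treat as the main (routine) obstacle --- is keeping the signs straight: the inner circle $\partial D_s$ carries the outward unit normal $-\partial_r$, and one must also distinguish arc-length integrals $\int_{\partial D_\rho}(\cdot)=\int_0^{2\pi}(\cdot)\,\rho\,d\theta$ from $d\theta$-integrals when passing between the polar form of the volume integrals and the boundary integrals, so that each of the three pieces of $P(t)$ --- the quadratic term, the linear term $2\int_{\partial D_t}\partial_r u$, and the term $t\int_{\partial D_t}Ke^{2u}$ --- comes out with exactly the stated coefficient.
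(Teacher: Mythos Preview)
Your argument is correct and is precisely the standard Rellich--Pohozaev computation one would expect here. The paper itself states Proposition~\ref{prop Pohozaev identity} without proof, treating it as a well-known identity, so there is nothing to compare against; your derivation via the multiplier $x\cdot\nabla u$, the cancellation of the interior $|\nabla u|^2$ terms through $\operatorname{div}x=2$, and the elimination of the bulk term $\int_\Omega Ke^{2u}$ by the divergence theorem is exactly the expected route and yields the stated formula with the correct coefficients.
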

	
	\begin{lem}
		\label{lemma one level bubble K C1}
		Under the assumptions of Lemma \ref{lemma one.level}, if we further assume $K_{k}$ converges to a positive function $K$ in $C^{1}(\overline{D})$, then $\{ u_{k} \}$ has no bubbles. 
	\end{lem}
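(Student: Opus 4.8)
The plan is to argue by contradiction, the only new ingredient being the Pohozaev identity of Proposition~\ref{prop Pohozaev identity}, whose error term $\int r e^{2u}\partial_r K$ is of exactly the size that the \emph{$C^1$}-convergence $K_k\to K$ makes negligible on the regions we need. So suppose $\{u_k\}$ has a bubble. Since we are under the hypotheses of Lemma~\ref{lemma one.level}, part~(1) gives $u_k\to u$ in $W^{2,p}_{\mathrm{loc}}(D\setminus\{y\})$; part~(2) says every blowup sequence $\{(x_k,r_k)\}$ has $x_k\to y$, induces a smooth bubble, and has its singular point $y_k\to y$ with $|y_k-x_k|/r_k\to\infty$; and part~(3) forces $\lambda>1$. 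As in the proof of Proposition~\ref{pro Theta's value when uk converges} (each rescaled $u_k'$ solves a singularity-free equation on every $D_R$ by part~(2), so Lemma~\ref{lemma one.level}(3) applies to it with exponent $0$), every bubble is at $1$-level and smooth, so there are finitely many, say $s\geq 1$, each with $\int_{\mathbb R^2}K(y)e^{2v_i}=4\pi$ and $\res(v_i,\infty)=0$ by \cite{CW-L}.

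From this and the area identity of Theorem~\ref{theorem area convergence of bubble tree} I would record two facts. First, the concentrated area is a \emph{single} atom: $e^{2u_k}\,dx\rightharpoonup e^{2u}\,dx+\tfrac{4\pi s}{K(y)}\,\delta_y$, using $u_k\to u$ in $L^1_{\mathrm{loc}}(D\setminus\{y\})$ together with the fact that all $s$ bubbles sit over $y$. Second, arguing exactly as in Proposition~\ref{pro equation of u and v using Theta}, $u$ solves $-\Delta u=Ke^{2u}+(4\pi s-2\pi\lambda)\delta_y$ near $y$, so $\nu:=\res(u,y)=\lambda-2s$; moreover $\nu>-1$ by Lemma~\ref{lemma removable singularity}(2) applied to $e^{2u}g_{\euc}$.

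Next I would apply Proposition~\ref{prop Pohozaev identity} twice. Working in a chart centred at $y_k$, the equation $-\Delta u_k=K_ke^{2u_k}$ holds on $D_\rho(y_k)\setminus\{y_k\}$; letting the inner radius go to $0$ and using the expansion $u_k=\lambda_k\log|x-y_k|+(\text{smooth})$ at $y_k$ gives, with $P_{u_k,y_k}$ the Pohozaev functional of $u_k$ about $y_k$, the identity $P_{u_k,y_k}(t)=2\pi\lambda_k(\lambda_k+2)+\int_{D_t(y_k)}(x-y_k)\cdot\nabla K_k\,e^{2u_k}$ for a.e.\ $0<t<\rho$. Now send $k\to\infty$: the left side tends to $P_{u,y}(t)$ since $u_k\to u$ in $C^1_{\mathrm{loc}}(D\setminus\{y\})$ and $y_k\to y$; on the right side $\lambda_k\to\lambda$, and the error integral tends to $\int_{D_t(y)}(x-y)\cdot\nabla K\,e^{2u}$ because $(x-y_k)\cdot\nabla K_k\to(x-y)\cdot\nabla K$ uniformly and this limit vanishes at $x=y$, so it annihilates the atom $\tfrac{4\pi s}{K(y)}\delta_y$. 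Applying Proposition~\ref{prop Pohozaev identity} directly to $u$ (which solves $-\Delta u=Ke^{2u}$ on $D_\rho(y)\setminus\{y\}$ with $u=\nu\log|x-y|+(\text{bounded})$) gives $P_{u,y}(t)=2\pi\nu(\nu+2)+\int_{D_t(y)}(x-y)\cdot\nabla K\,e^{2u}$. Subtracting, the error integrals cancel and $2\pi\lambda(\lambda+2)=2\pi\nu(\nu+2)$, i.e.\ $(\lambda+1)^2=(\nu+1)^2$; since $\lambda,\nu>-1$ this forces $\lambda=\nu$, hence $2s=0$, contradicting $s\geq1$. Therefore $\{u_k\}$ has no bubbles.

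The hard part will be the two limiting steps in the Pohozaev identity. One is the endpoint evaluation: showing that the limit of the Pohozaev functional at a conical point of weight $\mu$ equals $2\pi\mu(\mu+2)=2\pi((\mu+1)^2-1)$, which means controlling the boundary terms $t\int_{\partial D_t}(|\partial_r u|^2-r^{-2}|\partial_\theta u|^2)$, $2\int_{\partial D_t}\partial_r u$ and $t\int_{\partial D_t}Ke^{2u}$ against the local decomposition $u=\mu\log|x|+I_\mu+w$ (the tangential and curvature terms are the delicate ones when $\mu$ is close to $-1$). The other is showing the concentrated area is exactly the point mass $\tfrac{4\pi s}{K(y)}\delta_y$, so that the factor $(x-y)$ kills it in the error integral — this is precisely where Lemma~\ref{lemma one.level}(2) (all bubbles lie over $y$) and the area identity of Theorem~\ref{theorem area convergence of bubble tree} enter, and where the upgrade from $C^0$ to $C^1$ convergence of $K_k$ is used, since it is what makes $\int r e^{2u_k}\partial_r K_k$ an admissible error in the first place.
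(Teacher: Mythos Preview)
Your proposal is correct and follows essentially the paper's proof: contradiction via the Pohozaev identity of Proposition~\ref{prop Pohozaev identity} centred at the moving singular point $y_k$, reaching $\lambda(\lambda+2)=\nu(\nu+2)$ with $\nu=\lambda-2s>-1$ and hence $s=0$. The only difference is bookkeeping for the error term $\int r\,e^{2u_k}\partial_r K_k$: the paper bounds it crudely by $t\|K_k\|_{C^1}\Area(D,g_k)$ and takes the triple limit $t\to 0$, $k\to\infty$, $s\to 0$, whereas you carry it through the $k$-limit and cancel it against the identical term in the Pohozaev identity for $u$ (using that the concentrated atom sits exactly where $(x-y)\cdot\nabla K$ vanishes) --- both work, the paper's version being marginally more direct since it needs only the area bound and not the precise atom structure.
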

	
	\begin{proof}
		We argue by contradiction. Assume $\{u_{k}\}$ has at least one bubble. Then by Lemma \ref{lemma one.level}, there exist $s\geq 1$ smooth bubbles at 1-level at $y$. 
		By Theorem \ref{theorem area convergence of bubble tree}, 
		\begin{align*}
		\lim_{r \rightarrow 0} \lim_{k \rightarrow +\infty} \int_{ D_{r}(y)} K_{k} e^{2u_{k}}=4 \pi s, 
		\end{align*}
		which yields 
		\begin{align*}
		-\Delta u= K e^{2u}-2\pi (\lambda-2s) \delta_{y}.
		\end{align*}
		For convenience, we set $\lambda^{\prime}=\lambda-2s$. Define
		\begin{align*}
		P_k(t)&=t\int_{\partial D_{t}(y_k)}\Big(\Big|\frac{\partial u_k}{\partial r}\Big|^{2}-\frac{1}{r^{2}}\Big|\frac{\partial u_k}{\partial \theta}\Big|^{2}\Big)+2 \int_{ \partial D_{t}(y_k) } \frac{\partial u_k}{\partial r}+ t \int_{\partial D_t(y_k)} K_{k}e^{ 2u_k } ,\\
		P(t)&=t\int_{\partial D_{t}(y)}\Big(\Big|\frac{\partial u}{\partial r}\Big|^{2}-\frac{1}{r^{2}}\Big|\frac{\partial u}{\partial \theta}\Big|^{2}\Big)+2 \int_{ \partial D_{t}(y) } \frac{\partial u}{\partial r}+ t \int_{\partial D_t(y)} K e^{ 2u} .
		\end{align*}		
		By Proposition \ref{prop Pohozaev identity}, for any $0<s<t<\frac{1}{2}$ and sufficiently large $k$, 
		\begin{align*}
		|P_{k}(t)-P_{k}(s) | =\Big|\int_{D_{t}(y_{k}) \setminus D_{s}(y_{k}) } re^{2u} \frac{\partial K_{k} }{\partial r} \Big| \leq t \|K_{k } \|_{C^{1}( \overline{D} )} \Area(D,g_{k}),
		\end{align*}
		which yields 
		$$
		\lim_{t\rightarrow 0}\lim_{k\rightarrow +\infty}\lim_{s\rightarrow 0}|P_k(t)-P_k(s)|=0.
		$$
		Let's calculate $\lim_{k\rightarrow +\infty} \lim_{s\rightarrow 0}P_k(s)$ and $\lim_{t\rightarrow 0}\lim_{k\rightarrow +\infty}P_k(t)$ step by step.
		
		Since
		$$
		-\Delta u_{k} =K_{k}e^{2u_{k}}-2 \pi \lambda_k \delta_{y_k},
		$$
		we may write $u_{k}=v_{k}+\lambda_{k} \log r$, where 
		$$
		-\Delta v_{k}=K_{k} e^{2u_{k}}. 
		$$	
		Since $\lambda_k>-1$, for large $j$ we have $K_ke^{2u_k}\in L^p(D_{ 2^{-j}}(y_{k}))$ for some $p>1$. Then $v_{k} \in W^{2,p}(D_{ 2^{-j}  }(y_{k})) \subset  W^{1,2}(D_{ 2^{-j}  }(y_{k})) \cap C^{0}(D_{ 2^{-j}  }(y_{k}))$. It follows 
		\begin{align*}
		\int_{ D_{2^{-j}} (y_{k})  \setminus D_{2^{-j-1}}(y_{k})    } \big|\nabla v_{k}\big|^{2} \rightarrow 0,\quad \text{as } j\rightarrow +\infty.
		\end{align*}
		Hence, there exists $s_{j} \in (  2^{-j-1},2^{-j})$ such that
		\begin{align*}
		s_{j} \int_{\partial D_{s_{j}}(y_{k})} \big|\nabla v_{k}\big|^{2} \rightarrow 0,\quad \text{as } j \rightarrow +\infty.
		\end{align*}
		By direct calculations, 
		\begin{align*}
		&\lim_{j \rightarrow\infty} s_{j} \int_{\partial D_{s_{j}}}  \Big|\frac{\partial u_{k}}{\partial r} \Big|^{2}=\lim_{j  \rightarrow\infty} s_{j} \int_{\partial D_{s_{j}}} \Big|\frac{\partial v_{k}}{\partial r}+\frac{\lambda_{k}}{r}\Big|^{2}=2 \pi \lambda_k^{2}, \\
		&\lim_{j \rightarrow \infty} s_{j} \int_{\partial D_{s_{j}}} \frac{1}{r^{2}} \Big|\frac{\partial u_k}{\partial \theta} \Big|^{2}  \leq \lim_{j \rightarrow \infty} s_{j} \int_{\partial D_{s_{j}}} \big|\nabla v_{k}\big|^{2}=0,\\
		&\lim_{j  \rightarrow \infty} \int_{\partial D_{s_{j}}(y_{k})} \frac{\partial u_{k}}{\partial r}=\lim_{j  \rightarrow \infty} \Big( 2 \pi \lambda_k+\int_{\partial D_{s_{j}}(y_{k})} \frac{\partial v_{k}}{\partial r}\Big)\\&\hspace{1.2in} =2 \pi \lambda_k-\lim_{j  \rightarrow \infty}\int_{D_{s_{j}} (y_{k})} K_{k}e^{2u_{k}} =2 \pi \lambda_k,\\
		& \lim_{j  \rightarrow \infty}  s_{j} \int_{ \partial D_{s_{j}}(y_{k}) } K_{k} e^{2u_{k}}= 
		\lim_{j  \rightarrow \infty} s_{j}^{2} \int_{0}^{2 \pi} K_{k}(s_{j},\theta)   e^{2u_{k}(s_{j},\theta)}\\
		&\hspace{1.5in} \leq C\lim_{j \rightarrow \infty} s_{j}^{2+2\lambda_k}=0.
		\end{align*}
		Thus, we obtain
		$$
		\lim_{k\rightarrow\infty}\lim_{j \rightarrow\infty}P_{k}(s_j)=2\pi\lambda^2+4\pi\lambda.
		$$
		Since $u_k$ converges to $u$ in $C^{2,\alpha}_{\loc}(D\backslash\{y\})$, then by similar calculations, 
		$$
		\lim_{t\rightarrow 0}\lim_{k\rightarrow\infty}P_{k}(t)=\lim_{t\rightarrow 0}P(t)
		=2\pi{\lambda^{\prime}}^2+4\pi\lambda^{\prime}.
		$$
		Then 
		$$
		(\lambda-\lambda^{\prime})(\lambda+\lambda^{\prime}+2)=0.
		$$
		Since $\lambda^{\prime}=\lambda-2s<\lambda$, we obtain
		$$
		\lambda^{\prime}=-\lambda -2<-1
		$$
		which leads to a contradiction to Lemma \ref{lemma removable singularity}. 
	\end{proof}

	\begin{pro}
		\label{pro v's equation when Kk converges in C1}
		Let $g_{k}=e^{2u_{k}} g_{\mathbb{S}^{2}} \in \mathcal{M}(\mathbb{S}^{2})$ where $u_{k}$ are the solutions of \eqref{eq.positive on S2}. Assume \textnormal{(A2)}, \textnormal{(A3)} hold and $K_{k} \rightarrow K$ in $C^{1}(\mathbb{S}^{2})$ with $K>0$.  If $\{u_{k}\}$ has at least one bubble, then\\
		\textnormal{(a)} 
		$u_{k} -c_{k}\rightarrow v$ weakly in $\cap_{p\in{(1,2)}}W^{1,p}(\mathbb{S}^{2},g_{\mathbb{S}^{2}})$ with $c_{k} \rightarrow -\infty$. \\
		\textnormal{(b)} All bubbles of $\{u_{k}\}$ are at $1$-level. More precisely, if $\{u_{k}\}$ has at least one bubble at some $x_{0} \in \mathbb{S}^{2}$, then one of the following holds:\\
		\begin{enumerate}
			\item $x_{0}=$ some $p_{i}$, $\{u_{k}\}$ has $s=\beta_{i}+1$ smooth bubbles at $1$-level at $x_{0}$,
			and
			$$
			\Theta(x_{0})=4 \pi  +2 \pi \beta_{i}. 
			$$
			\item $x_{0}=$ some $p_{i}$, $\{u_{k}\}$ has one singular bubble at $x_{0}$ at 1-level, and 
			$$
			\Theta(x_{0})=4\pi +2\pi \beta_{i}. 
			$$
			\item$x_{0} \notin \{p_{1}, \cdots,p_{m}\}$, 
			$\{u_{k}\}$ has one smooth bubble  at $1$-level at $x_{0}$, and
			$$
			\Theta(x_{0})=4\pi. 
			$$
		\end{enumerate}
		\textnormal{(c)} Furthermore, there exists  a set $I \subset  \{1,\cdots,m\}$ such that
		\begin{align*}
		\#\,\{ \textnormal{ bubbles of $\{u_k\}$}\} =	\frac{1}{2} \chi(\mathbb{S}^{2},\beta)-\sum_{i \in I} \beta_{i},\leq 1+\frac{1}{2} \sum_{i=1}^{m} |\beta_{i}|.
		\end{align*}
		
	\end{pro}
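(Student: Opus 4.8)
The plan is to deduce (a)--(c) from material already in hand --- the vanishing-of-bubbles Lemma~\ref{lemma one level bubble K C1} (used both directly and on rescaled sequences), the case analysis of Proposition~\ref{pro v's equation when uk not converge}, and the counting of Proposition~\ref{pro uk not converge and one level} --- plus one further use of the Pohozaev identity, Proposition~\ref{prop Pohozaev identity}. For \emph{part (a)} I argue by contradiction: suppose $u_k\to u$ weakly on $\mathbb S^2$, the alternative to $c_k\to-\infty$. Around any $x_0\in\mathbb S^2$ pick an isothermal chart as in the Note; on this chart $u_k$ satisfies hypotheses (a)--(e) of Lemma~\ref{lemma one.level} with $\lambda_k=\beta_i^k$ if $x_0=p_i$ and $\lambda_k=0$ otherwise (the area bound and the scale-invariant gradient bound follow from $a<\Area(\mathbb S^2,g_k)<b$ and Lemma~\ref{lemma gradient estimate on Riemann surfaces}, and $\lambda\in(-1,\infty)$ by (A2)). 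Since moreover $K_k\to K$ in $C^1(\mathbb S^2)$, Lemma~\ref{lemma one level bubble K C1} shows $\{u_k\}$ has no bubble at $x_0$; as $x_0$ was arbitrary, $\{u_k\}$ has no bubble at all, contradicting the hypothesis. Hence $c_k\to-\infty$ and $u_k-c_k\to v$ weakly.

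For \emph{part (b)} we are in the setting of Proposition~\ref{pro v's equation when uk not converge}, so near each bubble point $x_0$ one of its four cases holds. First I rule out $2$-level bubbles, i.e.\ Case (4) there (the sub-case ``2.2.2'' of its proof): the singular $1$-level bubble $v^1$ is the weak limit of $u_k^1(x)=u_k(x_k^1+r_k^1x)+\log r_k^1$, which solves $-\Delta u_k^1=K_k(x_k^1+r_k^1x)e^{2u_k^1}-2\pi\beta_i^k\delta_{y_k^1}$ with $y_k^1\to y_\infty$ finite and $\beta_i^k\to\beta_i\in(-1,\infty)$. The area and gradient bounds are scale invariant, and $\nabla_x[K_k(x_k^1+r_k^1x)]=r_k^1(\nabla K_k)(x_k^1+r_k^1x)\to 0$ uniformly, so $K_k(x_k^1+r_k^1\cdot)\to K(x_0)>0$ in $C^1_{\loc}$; hence $\{u_k^1\}$ meets the hypotheses of Lemma~\ref{lemma one level bubble K C1} on a small disk about $y_\infty$ and has no bubble, contradicting the presence of $2$-level ones. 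Thus all bubbles are at $1$-level and only Cases (1)--(3) of Proposition~\ref{pro v's equation when uk not converge} occur, which correspond to items (1)--(3) of (b) here once the exact number of bubbles is determined.

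To determine that number I rerun the Pohozaev argument of Lemma~\ref{lemma one level bubble K C1}, now with circles centred at the bubble point $x_0$. Let $P_k(t)$ be the annular Pohozaev functional of $(u_k,K_k)$ about $x_0$. For the \emph{outer} limit, fix a small good radius $t$ with $\partial D_t(x_0)$ off the concentration set; since $e^{2u_k}=e^{2c_k}e^{2(u_k-c_k)}\to 0$ and $u_k-c_k\to v$ in $C^2_{\loc}$ away from the concentration set (elliptic estimates applied to $-\Delta(u_k-c_k)\to-1$ there), $P_k(t)\to t\int_{\partial D_t}(|\partial_r v|^2-r^{-2}|\partial_\theta v|^2)+2\int_{\partial D_t}\partial_r v$; letting $t\to 0$ with $v=\mu\log|z|+(\text{smooth})$ and $\mu=-\tfrac1{2\pi}\Theta(x_0)$, this tends to $2\pi\mu^2+4\pi\mu$. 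For the \emph{inner} limit, exactly as in Lemma~\ref{lemma one level bubble K C1} choose for each $k$ radii $s_j\downarrow 0$ at which the annular Dirichlet energy of the regular part is small; writing $u_k=w_k+\beta\log|z|$ near $x_0$ (with $\beta=\beta_i$ if $x_0=p_i$ and $\beta=0$ otherwise, $-\Delta w_k=K_ke^{2u_k}$), the same computation gives $\lim_k\lim_j P_k(s_j)=2\pi\beta^2+4\pi\beta$. Since the identity yields $|P_k(t)-P_k(s_j)|\le t\,\|K_k\|_{C^1(\overline{D})}\,\Area(D,g_k)\to 0$ as $t\to 0$, we obtain $(\mu-\beta)(\mu+\beta+2)=0$. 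Substituting $\Theta(x_0)$ from the relevant case of Proposition~\ref{pro v's equation when uk not converge} --- so $\mu=-2N+\beta_i$ (Case 2.1 with $N$ smooth bubbles), $\mu=-2s-\beta_i$ (Case 2.2.1), or $\mu=-2s$ (Case 1) --- and discarding the root that would force $N=0$, resp.\ $\beta_i\le-1$, resp.\ $s=0$, yields $N=\beta_i+1$, resp.\ $s=1$, resp.\ $s=1$, together with the stated values $\Theta(x_0)=4\pi+2\pi\beta_i$, $4\pi+2\pi\beta_i$, $4\pi$.

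For \emph{part (c)}, by (a) and (b) the hypotheses of Proposition~\ref{pro uk not converge and one level} hold with $I:=\{i:\{u_k\}\text{ has a singular bubble at }p_i\}$, giving $\#\{\text{bubbles}\}=\tfrac12\chi(\mathbb S^2,\beta)-\sum_{i\in I}\beta_i$; and $\tfrac12\chi(\mathbb S^2,\beta)-\sum_{i\in I}\beta_i=1+\tfrac12\sum_{i\notin I}\beta_i-\tfrac12\sum_{i\in I}\beta_i\le 1+\tfrac12\sum_{i=1}^m|\beta_i|$. The main obstacle is the Pohozaev step in (b): selecting the inner good radii below the bubble scales and computing the inner limit rigorously in the presence of the bubbles --- in Case 2.2.1 one must also check that the decomposition $u_k=w_k+\beta_i^k\log|z|$ near $x_0$, read off from the equation, has $w_k\in W^{1,2}_{\loc}$ for each fixed $k$ despite the singular bubble --- and justifying the $C^2_{\loc}$ convergence $u_k-c_k\to v$ used in the outer limit.
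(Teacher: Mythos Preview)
Your proposal is correct and follows essentially the same route as the paper: (a) by contradiction via Lemma~\ref{lemma one level bubble K C1}; (b) by ruling out two-level bubbles through Lemma~\ref{lemma one level bubble K C1} applied to the rescaled sequence (you correctly note that $K_k(x_k^1+r_k^1\cdot)\to K(x_0)$ in $C^1$ because the gradient picks up a factor $r_k^1$), then the Pohozaev identity centred at $x_0$ to pin down the number of bubbles; (c) via Proposition~\ref{pro uk not converge and one level}. The only organizational difference is that you invoke Proposition~\ref{pro v's equation when uk not converge} for the case split and the preliminary $\Theta(x_0)$ values, whereas the paper re-derives those cases in place; your version is slightly more modular. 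The ``obstacles'' you flag are not genuine gaps: for the inner limit the decomposition $u_k=\beta_i^k\log|z|+w_k$ with $w_k\in W^{2,p}$ for some $p>1$ holds for each fixed $k$ exactly as in the proof of Lemma~\ref{lemma one level bubble K C1} (the presence of a bubble is irrelevant at fixed $k$), and the outer $C^2_{\loc}$ convergence of $u_k-c_k$ away from the concentration set follows from Theorem~\ref{thm convergence of solutions of Radon measure when measure is small} plus elliptic bootstrapping, since $e^{2u_k}=e^{2c_k}e^{2(u_k-c_k)}\to 0$ there.
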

	
	\begin{proof}
		(a) follows from Lemma \ref{lemma one level bubble K C1} immediately. 
		\vspace{.1cm}
		
		\textbf{Step 1:} We prove (b) by investigating the structure of bubble trees. 
		
		For any fixed point $x_{0} \in \mathbb{S}^{2}$, if $\{u_{k}\}$ has at least one bubble at $x_{0}$, we choose an appropriate isothermal coordinate system with $x_{0}=0$. Via similar arguments as in the proof of Proposition \ref{pro v's equation when uk not converge}, we may assume 
		$\{u_{k}\}$ has $s$ blowup sequences $\{( x_{k}^{1}, r_{k}^{1}  )\}, \cdots, \{  (x_{k}^{s} , r_{k}^{s}  ) \}$ at $x_0$ at 1-level, and let $v^{1}, \cdots, v^{s}$ the corresponding bubbles. Set
		\begin{align*}
		u_{k}^{i}(x)=u_{k}(  x_{k}^{i} +r_{k}^{i}x    )+\log r_{k}^{i}.
		\end{align*}
		Then for any fixed $R>0$, when $k$ is sufficiently large, 
		\begin{align*}
		D_{R r_{k}^{i}}(x_{k}^{i } ) \cap D_{R r_{k}^{j}}(x_{k}^{j} ) =\emptyset, \quad i \neq j. 
		\end{align*}
		By Lemma \ref{lemma one level bubble K C1},  $\{ u_{k}^{i}(x)\}$ has no bubbles. 
		
		\noindent \textbf{Case 1:} $x_{0} \in \{p_{1}, \cdots, p_{m}  \}$. 
		WLOG, we may assume $x_{0}=p_{1}$, then $u_{k}$ solves 
		\begin{align*}
		-\Delta u_{k}=K_{k} e^{2u_{k}}-2 \pi \beta_{1}^{k} \delta_{0}. 
		\end{align*}
		
		\noindent \textbf{Case 1.1:} For any $j \in \{1,\cdots,s\}$ and any fixed $R>0$, $0 \notin D_{Rr_k^i}(x_k^i)$ for large $k$. Then
		each $v^i$ is smooth and no bubbles are at 2-level. By Theorem \ref{theorem area convergence of bubble tree}, we have
		\begin{align*}
		\Theta(x_{0})&=\lim_{r\rightarrow 0}\lim_{k\rightarrow+\infty}\K_{g_k}(D_r)\\
		&=K(x_{0})\lim_{r\rightarrow 0}\lim_{k\rightarrow+\infty}\Area(D_r)-2\pi \beta_{1}\\
		&=K(x_{0})\sum_{i=1}^{s} \Area(\R^2,g_{v^i})-2\pi \beta_{1}\\
		&=4\pi s-2\pi \beta_{1}.
		\end{align*}
		
		\noindent \textbf{Case 1.2:}  
		There exists $i_{0}$ such that $0\in D_{Rr_k^{i_{0}}}(x_k^{i_{0}})$ for fixed $R$. 
		WOLG, we assume $i_{0}=1$, then for any $i \in \{2,\cdots,s\}$, 
		for any fixed $R>0$, $0 \notin D_{Rr_k^i}(x_k^i)$ when $k$ is sufficiently large. Hence for any $i \in \{2,\cdots,s\}$, $v^{i}$ is smooth. 
		We set $y_k^1=\frac{-x_k^1}{r_k^1}$ and assume $y_k^1\rightarrow y_\infty$. Then $u_k^1$ satisfies the equation
		$$
		-\Delta u_k^1=K_k(x_k^1+r_k^1x)e^{2 u_k^1}-2\pi\beta_{1}^{k} \delta_{y_k}.
		$$
		Since $\{  u_{k}^{1} \}$ has no bubbles, then
		\begin{align*}
		-\Delta v^{1}=K(x_{0}) e^{ 2v^{1}  }-2 \pi \beta_{1} \delta_{Y}, \quad \int_{\mathbb{R}^{2}} K(x_{0}) e^{2v^{1}}=4 \pi +4 \pi \beta_{1}. 
		\end{align*}
		Then by Theorem \ref{theorem area convergence of bubble tree},
		\begin{align*}
		\Theta(x_{0})&=\lim_{r\rightarrow 0}\lim_{k\rightarrow+\infty}\K_{g_k}(D_r)\\
		&=K(x_{0})\lim_{r\rightarrow 0}\lim_{k\rightarrow+\infty}\Area(D_{r}, g_{k})-2\pi\beta_{1}\\
		&=K(x_{0})\sum_{i=1}^{s} \Area(\R^2,g_{v^i})-2\pi\beta_{1}\\
		&=(4 \pi +4 \pi \beta_{1}) +4\pi(s-1)-2\pi\beta_{1}=4\pi s +2 \pi \beta_{1}.
		\end{align*}
		
		\noindent \textbf{Case 2:} $x_0\notin\{p_1,\cdots,p_m\}$. 
		For this case, $u_{k}$ solves
		\begin{align*}
		-\Delta u_{k}=K_{k}  e^{2u_{k}}. 
		\end{align*} 
		Then for any $i \in \{1,\cdots,s\}$ and any fixed $R>0$, $0 \notin D_{Rr_k^i}(x_k^i)$ when $k$ is  large. So 
		each $v^i$ is smooth and no bubbles are at 2-level. 
		By Theorem \ref{theorem area convergence of bubble tree}, we have
		\begin{align*}
		\Theta(x_{0})&=\lim_{r\rightarrow 0}\lim_{k\rightarrow+\infty}\K_{g_k}(D_r)\\
		&= K(x_{0})\lim_{r\rightarrow 0}\lim_{k\rightarrow+\infty}Area(D_r)\\
		&=K(x_{0})\sum_{i=1}^{s} \Area(\R^2,g_{v^i})\\
		&=4\pi s.
		\end{align*}
		Therefore, $v$ solves the following equation locally:
		\begin{align*}
		-\Delta v=\Theta(x_{0}) \delta_{0}, 
		\end{align*}
		where $\Theta(x_{0}) =4 \pi s-2 \pi \beta_{1}$ (when \textbf{Case 1.1} holds) or 
		$4 \pi s+2 \pi \beta_{1}$ (when \textbf{Case 1.2} holds) or $4 \pi s$ (when \textbf{Case 2} holds). 
		
		Now we calculate $\Theta(x_{0}) $ more precisely via Proposition \ref{prop Pohozaev identity}. Define 
		$$
		P_k(t)=t\int_{\partial D_{t}}\Big(\Big|\frac{\partial u_k}{\partial r}\Big|^{2}-\frac{1}{r^{2}}\Big|\frac{\partial u_k}{\partial \theta}\Big|^{2}\Big)+2 \int_{ \partial D_{t} } \frac{\partial u_k}{\partial r}+ t \int_{\partial D_t} Ke^{ 2u_k }. 
		$$
		By Proposition \ref{prop Pohozaev identity}, we have
		$$
		\lim_{t\rightarrow 0}\lim_{k\rightarrow +\infty}\lim_{s\rightarrow 0}|P_k(t)-P_k(s)|\leq\lim_{t\rightarrow 0}\lim_{k\rightarrow +\infty}\lim_{s\rightarrow 0}Ct\Area(\mathbb{S}^{2}, g_k)=0.
		$$
		Since
		\begin{align*}
		-\Delta u_{k}=
		\left\{\begin{array}{rll}
		&K_{k}e^{2u_{k}}-2 \pi \beta_1^{k} \delta_{0}  & \text{when } x_{0}=p_{1}, \\
		&K_{k}e^{2u_{k}}  & \text{when } x_{0} \notin \{p_{1}, \cdots, p_{m}\} ,
		\end{array}\right.
		\end{align*}
		then as in the proof of Lemma \ref{lemma one level bubble K C1},
		there exists $s_{j} \rightarrow 0$ such that
		\begin{align*}
		\lim_{k\rightarrow+\infty}\lim_{j \rightarrow+\infty}P_{k}(s_j)=
		\left\{\begin{array}{rll}
		&2\pi(\beta_1)^2+4\pi\beta_1  & \text{when } x_{0}=p_{1}, \\
		&0 & \text{when } x_{0} \notin \{p_{1}, \cdots, p_{m}\} .
		\end{array}\right.
		\end{align*}
		On the other hand, since $c_{k } \rightarrow -\infty$ and  $v=-\frac{\Theta(x_{0}) }{2 \pi } \log r+V$, where $V$ is harmonic on $D$, then
		\begin{align*}
		&\lim_{t \rightarrow 0} \lim_{k\rightarrow+\infty}\int_{\partial D_{t}} \frac{\partial u_{k}}{\partial r}=\lim_{t \rightarrow 0}  \int_{\partial D_{t}} \frac{\partial v}{\partial r}=-\Theta(x_{0})\\
		&\lim_{t \rightarrow 0} \lim_{k\rightarrow+\infty} t \int_{\partial D_t} K_{k} e^{ 2u_k } \leq \lim_{t \rightarrow 0}   \lim_{k\rightarrow+\infty} Ct \int_{\partial D_{t}} e^{2v} e^{2c_k}=0, \\
		&\lim_{t \rightarrow 0}  \lim_{k  \rightarrow +\infty} t \int_{\partial D_{t}} \Big|\frac{\partial u_{k}}{\partial r}\Big|^{2}=\lim_{t \rightarrow 0}  t \int_{\partial D_{t}}  \Big|\frac{\partial v}{\partial r}\Big|^2=\frac{\Theta(x_{0})^{2}  }{2 \pi }, \\
		&\lim_{t \rightarrow 0}  \lim_{k \rightarrow +\infty} t \int_{\partial D_{t}} \frac{1}{r^{2}} \Big|\frac{\partial u_k}{\partial \theta} \Big|^{2} =\lim_{t \rightarrow 0}  t  \int_{\partial D_{t}} \frac{1}{r^{2}} \Big|\frac{\partial v}{\partial \theta} \Big|^{2}=0,
		\end{align*}
		which yields 
		\begin{align*}
		\lim_{t \rightarrow 0} \lim_{k \rightarrow +\infty} P_{k}(t)=\frac{\Theta(x_{0})^{2}}{2 \pi }-2 \Theta(x_{0})=2 \pi (\lambda^{\prime})^{2}-4 \pi \lambda^{\prime},
		\end{align*}
		where we set $\lambda^{\prime}=\frac{\Theta(x_{0})}{2\pi}$. 
		Therefore, when $x_{0}=p_{1}$, we obtain
		$$
		( \lambda^{\prime}  )^{2} -2\lambda^{\prime}= (\beta_1)^2+2\beta_1. 
		$$
		which is equivalent to 
		\begin{align*}
		( \lambda^{\prime} +\beta_{1} )(   \lambda^{\prime} -\beta_{1}-2  )=0. 
		\end{align*}
		
		If $\lambda^{\prime}=2s-\beta_{1}$ (when \textbf{Case 1.1} holds), then
		\begin{align*}
		2s(    2s-  2 \beta_{1}-2        )=0,
		\end{align*}
		which yields that $s=\beta_{1}+1$, and
		\begin{align*}
		\Theta(x_{0})=2\pi \beta_{1}+4 \pi. 
		\end{align*}
		
		If $\lambda^{\prime}=2s+\beta_{1}$ (when \textbf{Case 1.2} holds), then
		\begin{align*}
		(2s+  2 \beta_{1} )(2s-2)=0,
		\end{align*}
		which yields that $s=1$, and 
		\begin{align*}
		\Theta(x_{0})=2\pi \beta_{1}+4 \pi. 
		\end{align*}
		
		When $x_{0} \notin \{p_{1},\cdots,p_{m}\}$ (when \textbf{Case 2} holds), $\lambda^{\prime}=2s$, 
		then we obtain
		\begin{align*}
		( \lambda^{\prime}  )^{2} -2\lambda^{\prime}=0,
		\end{align*}
		which yields that $s=1$, and 
		\begin{align*}
		\Theta(x_{0})=4 \pi. 
		\end{align*}
		
		\textbf{Step 2:} 
		We prove (c). By (b), we know that $\{u_{k}\}$ only has bubbles at 1-level, then by (a) and Proposition \ref{pro uk not converge and one level}, there exists a set $I \subset \{1,\cdots,m\}$ such that
		\begin{align*}
		s =	\frac{1}{2} \chi(\mathbb{S}^{2},\beta)-\sum_{i \in I} \beta_{i}, 
		\end{align*}
		which yields an upper bound of $s$ immediately:
		\begin{align*}
		s=1+\frac{1}{ 2} \sum_{ i \in \{1,\cdots,m\} \setminus I }\beta_{i}-\frac{1}{2} \sum_{i \in I} \beta_{i} \leq 1+\frac{1}{2} \sum_{i=1}^{m} |\beta_{i}|. 
		\end{align*}

	\end{proof}

	\subsection{Nonexistence of solutions around certain prescribed date} 	

	We give the proof of Theorem \ref{thm openness of nonexistence K C0}. 
	\begin{proof}
		We first show the assertion holds under the assumption ($\mathscr{O}_1$). 
		
		Assuming the contrary, there exist $K_{k} \in C^{+}(\mathbb{S}^{2}) $, $\beta_{k}=( \beta_{1}^{k}, \cdots , \beta_{m}^{k}      ) \in (-1,\infty)^{m}$ such that  $K_{k} \rightarrow K$ in $C(\mathbb{S}^{2} )$, $\beta_{k} \rightarrow \beta$, and $u_{k}$ solves 
		\begin{align*}
		-\Delta_{\mathbb{S}^{2}} u_{k}=K_{k}e^{2u}-2\pi\sum_{i=1}^{m} \beta_{i}^{k} \delta_{p_i}-1.
		\end{align*}
		By ($\mathscr{O}_1$), 
		$\sum_{i=1}^{m} \beta_{i} \in (-2,0)$.
		If $\{u_{k} \}$ has at least one bubble, then by Corollary \ref{cor equation of v if chi<2}, there exist $s<m$ and 
		$1 \leq i_{1}<i_{2}<\cdots<i_{s} \leq m$ such that for any $1 \leq j \leq s$, $\beta_{i_{j}}<0$ and 
		\begin{align*}
		4 \pi s+ 4 \pi \sum_{j=1}^{s} \beta_{i_{j}}= 2\pi\sum_{i=1}^{m} \beta_i+4 \pi .
		\end{align*}
		which contradicts to the assumption: $\beta \in 	\mathcal{A}_m$. 
		
		In conclusion, $\{u_{k}\}$ has no bubble. 
		Applying Proposition  \ref{prop convergence.with singular point}, $u$ solves 
		\begin{align*}
		-\Delta_{\mathbb{S}^{2}} u=Ke^{2u}-2\pi\sum_{i=1}^{m} \beta_i\delta_{p_i}-1,
		\end{align*}
		a contradiction to our assumptions. 
		
		By applying similar arguments and Corollary \ref{cor equation of v if betai leq 1}, the assertion still holds under assumption ($\mathscr{O}_2$). 
	\end{proof}
	
	Applying similar arguments as in the proof of Theorem \ref{thm openness of nonexistence K C0} and Proposition \ref{pro v's equation when Kk converges in C1}, we can prove Theorem \ref{thm openness of nonexistence K C1}.

	The result below follows from Theorem \ref{thm openness of nonexistence K C1} and \cite[Theorem 5]{Dey2018}. 
	\begin{cor}
		\label{cor dey}
		Assume $\beta=( \beta_{1}, \cdots, \beta_{m} ) \in \mathcal{A}_{m}$ satisfying
		\begin{align*}
		d_{1}(\beta,\mathbb{Z}_{o}^{m})=1, \quad \beta_{i} -1 \notin \mathbb{Z}.
		\end{align*}
		Then for any distinct $p_{1}, \cdots, p_{m}$, there is a neighbourhood $U$ of $\beta$ in $(-1,\infty)^{m}$, such that for any $\tilde{\beta} \in U$, the divisor $\sum_{i=1}^{m} \tilde{\beta}_{i} p_{i}$ cannot be represented by any metric in $\mathcal{M}(\mathbb{S}^{2})$ with constant curvature $1$. 
	\end{cor}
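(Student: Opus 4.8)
The plan is to apply Theorem~\ref{thm openness of nonexistence K C1} with the curvature function $K\equiv 1\in C^{1,+}(\mathbb{S}^{2})$, feeding it the nonexistence at the seed $\beta$ supplied by \cite[Theorem~5]{Dey2018}. The first thing I would do is translate between the two languages involved. By Definition~\ref{def represent divisors in distributions}, a metric $e^{2u}g_{\mathbb{S}^{2}}\in\mathcal{M}(\mathbb{S}^{2})$ of constant curvature $1$ representing a divisor $\sum_{i=1}^{m}\gamma_{i}p_{i}$ is precisely a distributional solution $u\in\cap_{p\in[1,2)}W^{1,p}(\mathbb{S}^{2},g_{\mathbb{S}^{2}})$ of
\[
-\Delta_{\mathbb{S}^{2}}u=e^{2u}-2\pi\sum_{i=1}^{m}\gamma_{i}\delta_{p_{i}}-1 .
\]
The forward direction is immediate from the definition, while the converse uses the local structure recalled in Section~2 (elliptic regularity on $\mathbb{S}^{2}\setminus\{p_{1},\dots,p_{m}\}$ together with Lemma~\ref{lemma removable singularity} near each $p_{i}$), which shows that any such $u$ has the conical normal form with residue exactly $\gamma_{i}$ at $p_{i}$ and finite area, hence is a bona fide conical metric. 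Consequently the Corollary is equivalent to the statement: there is a neighbourhood $U$ of $\beta$ in $(-1,\infty)^{m}$ such that for every $\tilde\beta\in U$ the displayed equation with $\gamma=\tilde\beta$ has no $W^{1,p}$ solution.

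Next I would invoke the two inputs. At the seed $\gamma=\beta$, the hypotheses $d_{1}(\beta,\mathbb{Z}_{o}^{m})=1$ and $\beta_{i}-1\notin\mathbb{Z}$ are exactly those under which \cite[Theorem~5]{Dey2018} rules out a spherical metric with conical singularities of angles $2\pi(\beta_{i}+1)$ at $p_{1},\dots,p_{m}$ (the integrality condition $\beta_{i}-1\notin\mathbb{Z}$ disposing of the coaxial solutions that would otherwise survive on the borderline $\ell^{1}$-distance $d_{1}=1$); by the dictionary above, the displayed equation with $\gamma=\beta$ has no solution in $\cap_{p\in[1,2)}W^{1,p}(\mathbb{S}^{2},g_{\mathbb{S}^{2}})$. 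Since in addition $\beta\in\mathcal{A}_{m}$ and $K\equiv 1$ is a positive $C^{1}$ function, all hypotheses of Theorem~\ref{thm openness of nonexistence K C1} hold, so it yields a neighbourhood $\mathscr{U}$ of $(1,\beta)$ in $C^{1}(\mathbb{S}^{2})\times(-1,\infty)^{\times m}$ with no $(\tilde K,\tilde\beta)\in\mathscr{U}$ admitting a solution. Taking $U=\{\tilde\beta\in(-1,\infty)^{m}:(1,\tilde\beta)\in\mathscr{U}\}$, which is an open neighbourhood of $\beta$, and restricting to $\tilde K\equiv 1$, we conclude that for every $\tilde\beta\in U$ the divisor $\sum_{i=1}^{m}\tilde\beta_{i}p_{i}$ is not represented by any metric in $\mathcal{M}(\mathbb{S}^{2})$ of constant curvature $1$, which is the claim.

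The argument is thus mostly bookkeeping once Theorem~\ref{thm openness of nonexistence K C1} and Dey's theorem are in hand; the one point needing care is the equivalence between a $W^{1,p}$ distributional solution and a conical spherical metric in Dey's sense. One direction — a genuine conical metric of the prescribed angles produces such a $u$ — is trivial; the other, that a $W^{1,p}$ distributional solution cannot develop extra singular points or the wrong cone angles, is furnished by Lemma~\ref{lemma removable singularity}, which applies because the curvature density $e^{2u}$ is nonnegative and the total area $\tfrac{1}{2\pi}\int_{\mathbb{S}^{2}}e^{2u}\,dV_{g_{\mathbb{S}^{2}}}=\chi(\mathbb{S}^{2},\tilde\beta)$ is finite. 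I do not expect any obstacle in the openness step itself, which is a direct citation of Theorem~\ref{thm openness of nonexistence K C1}.
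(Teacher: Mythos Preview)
Your proposal is correct and follows exactly the approach the paper indicates: the paper's proof is the single sentence ``The result below follows from Theorem~\ref{thm openness of nonexistence K C1} and \cite[Theorem 5]{Dey2018},'' and you have simply unpacked this by making explicit the dictionary between conical spherical metrics and $W^{1,p}$ distributional solutions (via Definition~\ref{def represent divisors in distributions} and Lemma~\ref{lemma removable singularity}) and then slicing the neighbourhood $\mathscr{U}$ at $\tilde K\equiv 1$.
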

	
	\vspace{.1cm}
	
	\noindent{\bf Example.} 
	Let 
	$$
	\beta=( \beta_{1},\beta_{2},\beta_{3},\beta_{4} )=\left(-\frac{3-2 \alpha}{10}, 2k+\frac{\alpha-1}{10},\frac{\alpha-1}{10},2k+\frac{1}{10}\right),
	$$
	where $k \in \mathbb{N} \cup \{0\}$ and $\alpha \in (0,\frac{1}{100})$. 
	Then
	\begin{align*}
	&d_1(\beta,\mathbb{Z}_0^4)=d_1(\beta,(-1,2k,0,2k))=1, \\
	& \frac{1}{2} \chi(\mathbb{S}^{2},\beta)=1+2k+\frac{\alpha-1}{5} \in (0,+\infty) \setminus \mathbb{N}, \\
	& \big[ \frac{1}{2} \chi(\mathbb{S}^{2},\beta)\big]=2k, \\
	& \mathbb{F}(\beta)=\{ i: \beta_{i} \leq 2k+\frac{\alpha-1}{5}   \}. 
	\end{align*}
	
	\textbf{Case 1:} when $k=0$, $\mathbb{F}(\beta)=\{1\}$, and 
	\begin{align*}
	\frac{1}{2} \chi(\mathbb{S}^{2},\beta)-\beta_{1}=\frac{11}{10} \neq 1. 
	\end{align*}
	
	\textbf{Case 2:} when $k \geq 1$, $\mathbb{F}(\beta)=\{1,3\}$, and 
	\begin{align*}
	&\frac{1}{2} \chi(\mathbb{S}^{2},\beta)-\beta_{1}=2k+\frac{11}{10} \neq 1,\cdots,1+2k, \\
	&\frac{1}{2} \chi(\mathbb{S}^{2},\beta)-\beta_{3}=1+2k+\frac{ \alpha-1}{10} \neq 1, \cdots,1+2k, \\
	&\frac{1}{2} \chi(\mathbb{S}^{2},\beta)-\beta_{1}-\beta_{3}=1+2k-\frac{\alpha}{10} \neq 2,\cdots,2+2k. 
	\end{align*}
	
	In conculsion, $\beta \in 	\mathcal{A}_4\cap \{d_1(\beta,\mathbb{Z}_o^4)=1\}$. In other words, 
	$$
	\mathcal{A}_4\cap \{d_1(\beta,\mathbb{Z}_o^4)=1\}\neq\emptyset.
	$$
	All conditions in Corollary \ref{cor dey} are fulfilled, therefore there is a neighbourhood $U$ of $\beta$ such that no divisor in $U$ can be represented by a spherical metric.

	\bibliographystyle{plain}
	\bibliography{CM2}

\begin{thebibliography}{10}

\bibitem{bartolucci2011supercritical}
Daniele Bartolucci, Francesca De~Marchis, and Andrea Malchiodi.
\newblock Supercritical conformal metrics on surfaces with conical
  singularities.
\newblock {\em International Mathematics Research Notices},
  2011(24):5625--5643, 2011.

\bibitem{BCJM}
Daniele Bartolucci, Changfeng Gui, Aleks Jevnikar, and Amir Moradifam.
\newblock A singular sphere covering inequality: uniqueness and symmetry of
  solutions to singular liouville-type equations.
\newblock {\em Mathematische Annalen}, 374:1883–1922, 2019.

\bibitem{Bartolucci2007}
Daniele Bartolucci and Eugenio Montefusco.
\newblock Blow-up analysis, existence and qualitative properties of solutions
  for the two-dimensional {E}mden--{F}owler equation with singular potential.
\newblock {\em Mathematical methods in the applied sciences},
  30(18):2309--2327, 2007.

\bibitem{bartolucci2002}
Daniele Bartolucci and Gabriella Tarantello.
\newblock Liouville type equations with singular data and their applications to
  periodic multivortices for the electroweak theory.
\newblock {\em Communications in mathematical physics}, 229:3--47, 2002.

\bibitem{Brezis1991}
Ha{\"\i}m Brezis and Frank Merle.
\newblock Uniform estimates and blow--up behavior for solutions of $ -{ \Delta}
  u= {V} (x) e^{u}$ in two dimensions.
\newblock {\em Communications in partial differential equations},
  16(8-9):1223--1253, 1991.

\bibitem{ChenLin03}
Chiun-Chuan Chen and Chang-Shou Lin.
\newblock Topological degree for a mean field equation on riemann surfaces.
\newblock {\em Commun.Pure Appl. Math.}, 56(12):1667–1727, 2003.

\bibitem{ChenLin15}
Chiun-Chuan Chen and Chang-Shou Lin.
\newblock Mean field equation of liouville type with singular data: topological
  degree.
\newblock {\em Comm. Pure Appl. Math.}, 68(6):887–947, 2015.

\bibitem{C-L1}
Jingyi Chen and Yuxiang Li.
\newblock Homotopy classes of harmonic maps of the stratified 2-spheres and
  applications to geometric flows.
\newblock {\em Advances in Mathematics}, 263:357--388, 2014.

\bibitem{C-L2}
Jingyi Chen and Yuxiang Li.
\newblock Uniform convergence of metrics on {Alexandrov} surfaces with bounded
  integral curvature.
\newblock 2022.

\bibitem{CLWnegative}
Jingyi Chen, Yuxiang Li, and Yunqing Wu.
\newblock Prescribing negative curvature with cusps and conical singularities
  on compact surface.
\newblock 2024.

\bibitem{CW-L}
Wenxiong Chen and Congming Li.
\newblock Classification of solutions of some nonlinear elliptic equations.
\newblock {\em Duke Mathematical Journal}, 63(3):615 -- 622, 1991.

\bibitem{Dey2018}
Subhadip Dey.
\newblock Spherical metrics with conical singularities on 2-spheres.
\newblock {\em Geometriae Dedicata}, 196:53--61, 2018.

\bibitem{ding1999existence}
Weiyue Ding, J{\"u}rgen Jost, Jiayu Li, and Guofang Wang.
\newblock Existence results for mean field equations.
\newblock In {\em Annales de l'Institut Henri Poincar{\'e} C, Analyse non
  lin{\'e}aire}, volume~16, pages 653--666. Elsevier, 1999.

\bibitem{Eremenko2004}
Alexandre Eremenko.
\newblock Metrics of positive curvature with conic singularities on the sphere.
\newblock {\em Proceedings of the American Mathematical Society},
  132(11):3349--3355, 2004.

\bibitem{Eremenko2017}
Alexandre Eremenko.
\newblock Co-axial monodromy.
\newblock {\em Annali della Scuola Normale Superiore di Pisa. Classe di
  Scienze. Serie V}, (2):619--634, 2020.

\bibitem{gilbarg1977elliptic}
David Gilbarg and Neil~S Trudinger.
\newblock {\em Elliptic partial differential equations of second order}, volume
  224.
\newblock Springer, 1977.

\bibitem{GM}
Changfeng Gui and Amir Moradifam.
\newblock The sphere covering inequality and its applications.
\newblock {\em Invent. math.}, 214:1169–1204, 2018.

\bibitem{kapovich}
Michael Kapovich.
\newblock Branched covers between spheres and polygonal inequalities in
  simplicial trees.
\newblock {\em preprint}, 273, 2017.

\bibitem{li1999harnack}
Yanyan Li.
\newblock Harnack type inequality: the method of moving planes.
\newblock {\em Communications in Mathematical Physics}, 200:421--444, 1999.

\bibitem{li-Shafrir1994}
Yanyan Li and Itai Shafrir.
\newblock Blow-up analysis for solutions of $-{ \Delta} u= {V} (x) e^{u} $ in
  dimension two.
\newblock {\em Indiana University Mathematics Journal}, 43(4):1255--1270, 1994.

\bibitem{Lin}
C.S. Lin.
\newblock Topological degree for mean field equations on ${S}^2$.
\newblock {\em Duke Math. J.}, 104(3):501–536, 2000.

\bibitem{luo1992liouville}
Feng Luo and Gang Tian.
\newblock Liouville equation and spherical convex polytopes.
\newblock {\em Proceedings of the American Mathematical Society},
  116(4):1119--1129, 1992.

\bibitem{Mazzeo-Zhu}
Rafe Mazzeo and Xuwen Zhu.
\newblock Conical metrics on {R}iemann surfaces, {I}: the compactified
  configuration space and regularity.
\newblock {\em Geometry \& Topology}, 24(1):309--372, 2020.

\bibitem{Mondello-P}
Gabriele Mondello and Dmitri Panov.
\newblock Spherical metrics with conical singularities on a 2-sphere: angle
  constraints.
\newblock {\em International Mathematics Research Notices},
  2016(16):4937--4995, 2016.

\bibitem{SSTW}
Yuguang Shi, Jiacheng Su, Gang Tian, and Dongyi Wei.
\newblock Uniqueness of the mean field equation and rigidity of hawking mass.
\newblock {\em Calc. Var.}, 58(41), 2019.

\bibitem{Tarantello2005}
Gabriella Tarantello.
\newblock A quantization property for blow up solutions of singular
  {L}iouville-type equations.
\newblock {\em Journal of Functional Analysis}, 219(2):368--399, 2005.

\bibitem{Troyanov1988}
Marc Troyanov.
\newblock Metrics of constant curvature on a sphere with two conical
  singularities.
\newblock {\em Differential Geometry: Proceedings of the 3 rd International
  Symposium}, pages 296--306, 1988.

\bibitem{Troyanov1991}
Marc Troyanov.
\newblock Prescribing curvature on compact surfaces with conical singularities.
\newblock {\em Transactions of the American Mathematical Society},
  324(2):793--821, 1991.

\end{thebibliography}

\end{document}